\newtheorem{theo}{Theorem}[section]
\newtheorem{lemma}[theo]{Lemma}
\newtheorem{defi}[theo]{Definition}
\newtheorem{prop}[theo]{Proposition}
\newtheorem{conj}[theo]{Conjecture}
\newtheorem{cor}[theo]{Corollary}
\newtheorem{remark}[theo]{Remark}
\numberwithin{equation}{section}
\def\A{{\mathbb A}}
\def\bL{\mathbb{L}}
\def\R{\mathbb{R}}
\def\C{\mathbb{C}}
\def\Z{\mathbb{Z}}
\def\Q{\mathbb{Q}}
\def\bR{{\mathbf R}}
\def\bL{{\mathbf L}}
\def\PP{{\mathbb P}}
\def\pre-tr{\operatorname{pre-tr}}
\def\Hom{\operatorname{Hom}}
\def\End{\operatorname{End}}
\def\gr{\operatorname{gr}}
\newcommand{\cF}{{\mathcal F}}
\newcommand{\cO}{{\mathcal O}}
\newcommand{\cM}{{\mathcal M}}
\newcommand{\cD}{{\mathcal D}}
\newcommand{\cA}{{\mathcal A}}
\newcommand{\cC}{{\mathcal C}}
\newcommand{\cT}{{\mathcal T}}
\newcommand{\cH}{{\mathcal H}}
\newcommand{\un}{\underline}
\newcommand{\mrB}{\operatorname{B}}
\newcommand{\Add}{\operatorname{Add}}
\newcommand{\Perf}{\operatorname{Perf}}
\newcommand{\Perv}{\operatorname{Perv}}
\newcommand{\rat}{\operatorname{rat}}
\newcommand{\coker}{\operatorname{Coker}}
\newcommand{\im}{\operatorname{Im}}
\newcommand{\re}{\operatorname{Re}}
\newcommand{\Rep}{\operatorname{Rep}}
\newcommand{\Crit}{\operatorname{Crit}}
\newcommand{\Nilp}{\operatorname{Nilp}}
\newcommand{\Mat}{\operatorname{Mat}}
\newcommand{\Ext}{\operatorname{Ext}}
\newcommand{\Tr}{\operatorname{Tr}}
\newcommand{\Arg}{\operatorname{Arg}}
\newcommand{\Aut}{\operatorname{Aut}}
\newcommand{\rmMod}{\operatorname{Mod}}
\newcommand{\Spec}{\operatorname{Spec}\,}
\newcommand{\Ho}{\operatorname{Ho}}
\newcommand{\id}{\operatorname{id}}
\newcommand{\Gr}{\operatorname{Gr}}
\newcommand{\pr}{\operatorname{pr}}
\newcommand{\rmmod}{\operatorname{mod}}
\title[Quantum cluster variables via vanishing cycles]
{Quantum cluster variables via vanishing cycles}
\author{Alexander I. Efimov}
\address{Steklov Mathematical Institute of RAS, Gubkin str. 8, GSP-1, Moscow 119991, Russia}
\email{efimov@mccme.ru}
\thanks{MSC: 13F60, 14C30, 16F20, 32S30.}
\thanks{The author was partially supported by
"Dynasty" Foundation, Simons Foundation, RFBR (grant 4713.2010.1), and by AG Laboratory HSE, RF government grant, ag. 11.G34.31.0023.}
\begin{document}

\begin{abstract}In this paper, we provide a Hodge-theoretic interpretation of Laurent phenomenon for general skew-symmetric quantum cluster algebras, using Donaldson-Thomas theory for a quiver with potential. It turns out that the positivity conjecture reduces to
the certain statement on purity of monodromic mixed Hodge structures on the cohomology with the coefficients in the sheaf of vanishing cycles on the moduli of stable framed representations.

As an application, we show that the positivity conjecture (and actually a stronger result on Lefschetz property) holds if either initial or mutated quantum seed is acyclic. For acyclic initial seed the positivity has been already shown by F. Qin \cite{Q} in the quantum case, and also by Nakajima \cite{Nak} in the commutative case.\end{abstract}

\keywords{}

\maketitle

\tableofcontents

\section{Introduction}

Cluster algebras were introduced in \cite{FZ02}. They form a certain class of commutative algebras with a distinguished
set of generators, which are called {\it cluster variables.} If the cluster algebra has rank $n,$ then the set of generators
is a union of distinguished $n$-element subsets called {\it clusters.} There is a rule of mutation of such clusters,
when one cluster variable is replaced by some very simple rational function in the variables of the same cluster:
$$xx'=M_1+M_2,$$
where $x$ is the cluster variable, $x'$ is its replacement, and $M_1$ and $M_2$ are monomials in the other cluster variables
in the same cluster. Moreover, all clusters are obtained by such mutations from any given cluster.

The most surprising property of cluster algebras is {\it Laurent phenomenon:} any cluster variable
is actually a Laurent polynomial in the variables of any given cluster. It leads to the well-known {\it positivity conjecture:}
all such Laurent polynomials have non-negative integer coefficients.

We suggest \cite{Ke1}, \cite{Ke2} as nice survey articles on cluster algebras and their categorification.

In \cite{P1}, Plamondon obtains uses certain categorification of cluster algebras to obtain a general formula for cluster monomials for skew-symmetric cluster algebras.
The same formulas are actually obtained in \cite{DWZ2}, the coincidence is shown in \cite{P2}.
The resulting coefficients are Euler characteristics of some quiver Grassmannians. However, this does not imply the positivity conjecture,
since a priori Euler characteristic can be negative.

Another approach to categorification is studied by Nakajima \cite{Nak}. He uses it to prove positivity conjecture (w.r.t. all seeds) for cluster algebras coming from bipartite quivers.
In the Appendix of \cite{Nak} the positivity conjecture is proved for acyclic initial seed. It was announced by Y. Kimura and F. Qin \cite{KQ} that they
have a generalization of Nakajima's results on categorification for all acyclic {\it quantum} cluster algebras (see below the definition of quantum cluster algebras), which implies positivity conjecture w.r.t.
all quantum seeds which are mutationally equivalent to an acyclic seed.

Our approach is very close to the paper of K. Nagao \cite{N}. He uses Donaldson-Thomas theory (in a framework different from our paper) for quivers with potentials to obtain the same formulas for cluster variables (which can be easily generalized for
cluster monomials), under certain assumptions on the quiver with potential: the potential should be polynomial and this property should be preserved
under a finite sequence of mutations. The reason for such restrictive assumption is that the Donaldson-Thomas theory is not well-developed at the moment
for the case of formal potential (see below).

The goal of this paper is to obtain the formulas for quantum cluster monomials, using the approach of \cite{N}, for
 arbitrary skew-symmetric quantum cluster algebras. Also, our results can be viewed as a generalization of quantum cluster character
\cite{Q}. We use the framework of mixed Hodge modules for Donaldson-Thomas theory,
  which is developed in \cite{KS}, and has first been considered in \cite{DS} in the geometric situation.
Positivity conjecture does not follow automatically from our results (as it does in \cite{Q} for acyclic initial seed),
but it reduces to a certain conjecture on purity of monodromic mixed Hodge structures (see below).

We will deal with skew-symmetric quantum cluster algebras, as introduced in \cite{BZ}. Every skew-symmetrizable cluster algebra
can be quantized in the sense of \cite{BZ}. Here the algebra is non-commutative: it is contained in the skew-field of fractions
of a quantum torus. If $L\cong\Z^m$ is some free finitely generated abelian group, and
$$\Lambda:L\times L\to\Z$$
a non-degenerate pairing, then the quantum torus $\cT_{\Lambda}$ is an algebra over $\Z[q^{\pm\frac12}],$
with a distinguished basis $X^e,$ $e\in L,$ satisfying
$$X^e\cdot X^f=q^{\frac12\Lambda(e,f)}X^{e+f}.$$
The algebra $\cT_{\Lambda}$ is an Ore domain, and we have its skew-field of fractions $\cF_{\Lambda}.$
A cluster is assigned to the quantum seed $(M,\tilde{B}),$ where $M:\Z^m\to\cF_{\Lambda}$
is a map of special kind, the so-called toric frame (the analogue of transcendence basis, generating $\cF_{\Lambda}$), and $\tilde{B}\in\Mat_{m\times n}(Z)$
(where $n$ is the rank of quantum cluster algebra), satisfying some compatibility condition (see Subsections \ref{ss:TF}, \ref{ss:SSQS}).
 There is a rule of mutation of such quantum seeds (see Subsection \ref{ss:MSSQS}).

 The cluster variables are $M(e_i),$ $1\leq i\leq n\leq m,$
the cluster monomials are $M(\lambda),$ $\lambda\in\Z_{\geq 0}^m,$ and the elements $M(e_i),$ $n+1\leq m,$ are called {\it coefficients,}
 they do not change under mutations.
Taking the $\Z[q^{\pm\frac12}]$subalgebra $\cA_S\subset\cF_{\Lambda}$ generated by all cluster variables, coefficients and their inverses
in a given mutation-equivalence class $S$ of quantum seeds, we obtain the {\it quantum cluster algebra} (Definition \ref{def:QCA}).

Again, the Laurent phenomenon holds: if $L=\Z^m,$ and there is some quantum seed $(M,\tilde{B})$ in $S,$ with $M(c)=X^{c},$
then $$\cA_S\subset\cT_{\Lambda}$$
(Theorem \ref{LP}). We also have positivity conjecture: all cluster monomials have positive coefficients as elements of $\cT_{\Lambda}.$

We will obtain a Hodge-theoretic interpretation of quantum Laurent phenomenon for all cluster monomials,
using Donaldson-Thomas theory for a quiver with potential, developed in \cite{KS}. First, we replace the base ring
$\Z[q^{\frac12}]$ by $\hat{R}[T^{\frac12}],$ where $\hat{R}$ is the completion of $R=K_0(MMHS),$ where $MMHS$
is the abelian category of so-called monodromic mixed Hodge structures with Thom-Sebastiani product $\star_+,$ and $T$ is the class of $\Q(-1)$ (see Subsection \ref{ss:MMHS}).

For any smooth algebraic variety $X,$ with a regular
function $f:X\to\C$ and a locally closed subset $X^{sp}\subset f^{-1}(0),$ one defines (Definition \ref{critical_cohomology}) the critical cohomology with compact support
$$H^{i,crit}_c(X^{sp},f)\in MMHS.$$

We have the motivic quantum torus $$\cT_{\Lambda}^{mot}:=\cT_{\Lambda}\otimes_{\Z[q^{\pm\frac12}]}\hat{R}[T^{\frac12}],$$
and motivic cluster monomials $M'^{mot}(\lambda)=M'(\lambda)\otimes 1\in\cT_{\Lambda}^{mot}.$

For any quiver $Q,$ polynomial potential $W$ on $Q,$ a central charge $Z:\Z^{V(Q)}\to\C$ (given by some map $V(Q)\to\cH_+$) an angle
$0<\phi<\pi,$ and a vector $\lambda\in\Z_{\geq 0}^{V(Q)}$ one associates the moduli space of stable framed representations
$\cM_{\gamma,<\phi,\lambda}^{sfr},$ which is a smooth algebraic variety with a regular function
$$W_{\gamma}=\Tr(W):\cM_{\gamma,<\phi,\lambda}^{sfr}\to\C$$
(Subsection \ref{ss:SFR}). We have a closed subset
$$\cM_{\gamma,<\phi,\lambda}^{sp,sfr}\subset \cM_{\gamma,<\phi,\lambda}^{sfr}$$
which consists of stable framed representations which are nilpotent and are critical points of $W_{\gamma}.$

Now suppose that $L=\Z^m,$ the initial seed is of the form $(M(c)=X^c,\tilde{B}),$ and we have the mutated seed
$$(M_r,\tilde{B}_r)=\mu_{k_r}(\dots(\mu_{k_1}(M,\tilde{B}))).$$
One can associate to the matrix $\tilde{B}$ a (non-unique) quiver $Q$ with the set of vertices $\{1,\dots,m\},$ and define the mutations for quivers (Subsection \ref{ss:MSSQS}).

Let $Q$ be the quiver corresponding to $\tilde{B},$ and $$Q_r=\mu_{k_r}(\dots(\mu_{k_1}(Q))\dots).$$

The notion of mutation can be extended to quivers with formal potentials (QP's) \cite{DWZ}, see Subsection \ref{ss:MQFP}. Suppose that the polynomial QP $(Q_r,W_r)$
is obtained by a sequence of mutations from a formal QP $(Q,W):$
$$(Q_r,W_r)=\mu_{k_r}(\dots(\mu_{k_1}((Q,W)))\dots).$$
We have Ginzburg DG algebras \cite{G} $\hat{\Gamma}_{Q,W},$ $\hat{\Gamma}_{Q_r,W_r},$ inclusions
$$\iota:D^b(\hat{\Gamma}_{Q,W})\hookrightarrow \Perf(\hat{\Gamma}_{Q,W})\subset D(\hat{\Gamma}_{Q,W}),$$
natural isomorphisms
$$K_0(\Perf(\hat{\Gamma}_{Q,W}))\cong\Z^m,\quad K_0(D^b(\hat{\Gamma}_{Q,W}))\cong\Z^m,$$
and similarly for $\hat{\Gamma}_{Q_r,W_r}$ (Subsection \ref{ss:GDGA}). We have a natural choice of equivalence
$$\Phi(r):D(\hat{\Gamma}_{Q,W})\stackrel{\sim}{\to} D(\hat{\Gamma}_{Q_r,W_r})$$
coming from Nagao's result \cite{N} (Theorem \ref{Nagao} below). Also, put
$$\hat{\Gamma}_{\un{k},\lambda}:=\Phi(r)^{-1}(\bigoplus\limits_{j=1}^m\hat{\Gamma}_{Q_r,W_r,j}^{\lambda_j}),\quad\lambda\in\Z_{\geq 0}^m,$$
where $\hat{\Gamma}_{Q_r,W_r,j}$ are indecomposable projective DG modules.

As usual, for any finite quiver $Q$ with the set of vertices $V(Q),$ we denote by $\chi_Q$ the Euler form on $\Z^{V(Q)}:$
$$\chi_Q(\gamma_1,\gamma_2)=\sum\limits_{i\in V(Q)}\gamma_1^i\gamma_2^j-\sum\limits_{i,j\in V(Q)}a_{ji}\gamma_1^i\gamma_2^j,$$
where $a_{ij}$ is the number of arrows from $i$ to $j.$

\begin{theo}\label{LP_by_sfr_intro} There exist a central charge $Z$ on $Q_r,$
 and an angle $0<\phi<\pi,$ such that for $\lambda\in\Z_{\geq 0}^m$
$$M_r^{mot}(\lambda)=X^{[\hat{\Gamma}_{\un{k},\lambda}]}\cdot\sum\limits_{\gamma\in\Z_{\geq 0}^m}[D(H^{\bullet,crit}_c(\cM_{\gamma,<\phi,\lambda}^{sp,sfr},(W_r)_{\gamma}))]\cdot T^{-\frac12\chi_{Q_r}(\gamma,\gamma)}X^{\iota(\Phi(r)^{-1}[1](\gamma))},$$
where we view $\gamma$ as an element of $K_0(D^b(\hat{\Gamma}_{Q_r,W_r})).$
In particular, we have that
$$[H^{\bullet,crit}_{c}(\cM_{\gamma,<\phi,\lambda}^{sp,sfr},(W_r)_{\gamma}))]\in\Z[T^{\pm 1}]\subset R.$$\end{theo}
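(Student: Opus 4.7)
The plan is to derive the formula as a motivic (Hodge-theoretic) lift of the quantum cluster character, using Nagao's equivalence $\Phi(r)$ to transport the mutated-seed description back to the initial seed, and then identifying the coefficients with critical cohomology of the framed moduli.

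First, I would establish a quantum/motivic cluster character formula adapted to the seed $(M_r,\tilde{B}_r)$. On the mutated side, the cluster monomial $M_r^{mot}(\lambda)$ is a product of powers $M_r^{mot}(e_j)^{\lambda_j}$ (up to the $q^{1/2}$-twist dictated by the pairing $\Lambda_r$). For each $j$, the generator corresponds under Nagao's categorification to the indecomposable projective $\hat\Gamma_{Q_r,W_r,j}$; the class $[\hat\Gamma_{\un{k},\lambda}] = [\Phi(r)^{-1}(\bigoplus_j \hat\Gamma_{Q_r,W_r,j}^{\lambda_j})]\in K_0(\Perf(\hat\Gamma_{Q,W}))\cong\Z^m$ produces the leading monomial $X^{[\hat\Gamma_{\un{k},\lambda}]}$ in the initial quantum torus $\cT_\Lambda^{mot}$.

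Second, I would interpret the sum over $\gamma$ as the Hodge-theoretic incarnation of Nagao's formula. Given a choice of central charge $Z$ and angle $\phi$, the moduli $\cM_{\gamma,<\phi,\lambda}^{sfr}$ of $\lambda$-framed, $Z$-stable representations of the Jacobian algebra of $(Q_r,W_r)$ is a smooth quasi-projective variety carrying the regular function $(W_r)_\gamma = \Tr(W_r)$; the nilpotent critical locus $\cM_{\gamma,<\phi,\lambda}^{sp,sfr}$ is the natural support for the sheaf of vanishing cycles. By the framework of \cite{KS} (see also \cite{DS}), the class $[D(H^{\bullet,crit}_c(\cM_{\gamma,<\phi,\lambda}^{sp,sfr},(W_r)_\gamma))]$ is the Hodge-theoretic upgrade of the motivic DT invariant of the stack of framed stable representations of dimension vector $\gamma$. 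The prefactor $T^{-\frac12\chi_{Q_r}(\gamma,\gamma)}$ arises from the virtual dimension of the relevant moduli stack (equivalently, the Euler-form normalization of the motivic quantum torus), and the monomial $X^{\iota(\Phi(r)^{-1}[1](\gamma))}$ records the dimension-vector contribution transported to the initial seed via the Ginzburg DG-algebra equivalence $\Phi(r)$ of Theorem \ref{Nagao}.

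Third, I would perform the actual identification: write out $M_r^{mot}(\lambda)$ on the $(M_r,\tilde B_r)$-side using the (motivic) quantum cluster character formula that comes out of Plamondon's/DWZ's formula (its Hodge-theoretic refinement), and then apply $\Phi(r)^{-1}$ termwise. Under $\Phi(r)^{-1}$, projective classes give the leading factor $X^{[\hat\Gamma_{\un{k},\lambda}]}$, while classes of simples (the generators of $K_0(D^b(\hat\Gamma_{Q_r,W_r}))$ indexing $\gamma$) map to $\iota(\Phi(r)^{-1}[1](\gamma))\in K_0(\Perf(\hat\Gamma_{Q,W}))$, consistent with the shift-by-one convention in Nagao's setup. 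The integration-map compatibility (which exchanges Thom-Sebastiani product $\star_+$ with the quantum torus multiplication $X^eX^f=q^{\frac12\Lambda(e,f)}X^{e+f}$) converts the motivic DT class into the prescribed sum, and the last assertion $[H^{\bullet,crit}_c(\cdots)]\in \Z[T^{\pm 1}]\subset R$ follows because the left-hand side $M_r^{mot}(\lambda)$ has coefficients in $\Z[T^{\pm 1}]$ (it is the image of a quantum cluster monomial under base change $q^{1/2}\mapsto T^{1/2}$), and the leading coefficient expansion forces each term to lie in $\Z[T^{\pm 1}]$.

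The main obstacle will be Step 3: the precise verification that the transport under $\Phi(r)$ matches, on classes of objects with the right shift and Euler-form twist, the term-by-term structure of the sum. In particular, one needs to check (a) that the stability/angle data $(Z,\phi)$ can be chosen on $Q_r$ so that the $\lambda$-framed stability coincides with the "projective resolution" framing coming from Nagao's categorification, and (b) that the vanishing-cycle cohomology of the framed moduli is indeed the Hodge-theoretic refinement of the motivic class appearing in Plamondon/DWZ/Nagao. Both are essentially geometric compatibility statements — the first is a choice-of-stability argument, and the second is a smoothness/critical-locus identification — but they are where the bulk of the work lies.
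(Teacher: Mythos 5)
Your outline has a genuine gap at its core: Step 3 starts from ``the (motivic) quantum cluster character formula that comes out of Plamondon's/DWZ's formula (its Hodge-theoretic refinement)'' on the mutated side and proposes to transport it termwise by $\Phi(r)^{-1}$. But no such quantum/Hodge-theoretic refinement is available as an input --- the Plamondon/DWZ formulas are commutative and produce Euler characteristics, and their quantum upgrade is known (Qin) only for acyclic seeds. Establishing exactly this refinement, for arbitrary skew-symmetric quantum seeds, is the content of the theorem; assuming it makes the argument circular. The paper proceeds in the opposite direction: it never invokes a pre-existing cluster character, but derives the formula from Donaldson--Thomas wall-crossing. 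Concretely, Theorem \ref{torsion_pairs_via_Z} produces the central charge and angle (your item (a), which you leave as ``a choice-of-stability argument'') and factorizes $A^{Z_r}_{V_{>\phi_r}}$ into quantum dilogarithms $(-T^{\frac12}\hat w_{[S(i)]};T)_\infty^{\epsilon_i}$; Theorem \ref{cluster_var_via_conj} then proves by induction along the mutation sequence, using the elementary $q$-Pochhammer conjugation lemma that reproduces the quantum exchange relation, that
$$M_r^{mot}(\lambda)=A^{Z_r}_{V_{>\phi_r}}\cdot X^{[\hat{\Gamma}_{\un{k},\lambda}]}\cdot (A^{Z_r}_{V_{>\phi_r}})^{-1}.$$
The passage from this conjugation to the sum over framed moduli is Proposition \ref{conj_and_sfr}, a wall-crossing computation with the extended quiver $Q_\lambda$ and degenerating central charges $\tilde Z_{\epsilon,t}$ --- this is the actual mechanism behind your item (b), which you describe only as a ``smoothness/critical-locus identification.''

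Two further points your proposal does not address. Since the initial potential $W$ is only formal, one cannot literally speak of DT series on the initial side; the paper needs Theorem \ref{DT_for_formal} (via the cyclic $A_\infty$-functor lifting $\Phi(r)$, Proposition \ref{3CYlifting}, and the parity condition of Remark \ref{parity_for_mutations}) to define these classes and to show they are preserved under $\Phi(r)$ --- transporting ``termwise'' as you suggest requires precisely this statement. Also, the finiteness of the sum over $\gamma$ is not automatic: the paper deduces it from Corollary \ref{fin_dim_of_H^1} (finite-dimensionality of $H^1(\hat\Gamma_{\un k,\lambda})$, resting on Plamondon's results), identifying the possible $\gamma$ with quotients of a fixed finite-dimensional module. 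Your final remark on $[H^{\bullet,crit}_c]\in\Z[T^{\pm1}]$ is essentially the paper's argument ($\Z[T^{\pm\frac12}]\cap\hat R=\Z[T^{\pm1}]$), but it only becomes available once the main identity has been established by the route above.
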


More precise formulation is Theorem \ref{LP_by_sfr}.

In the terminology of \cite{FZ07}, the vectors $[\hat{\Gamma}_{\un{k},\lambda}]$ are $g$-vectors, and the polynomials
$$\sum\limits_{\gamma\in\Z_{\geq 0}^m}[D(H^{\bullet,crit}_c(\cM_{\gamma,<\phi,\lambda}^{sp,sfr},(W_r)_{\gamma}))]\cdot T^{-\frac12\chi_{Q_r}(\gamma,\gamma)}y^{\Phi(r)^{-1}[1](\gamma)}$$
are motivic quantum $F$-polynomials.

\begin{cor}In the assumptions of the above Theorem, suppose that for some $\lambda$ and for all $\gamma$ we have that
$H^{i,crit}_c(\cM_{\gamma,\pi-\phi_r,\lambda}^{sp,sfr},(W_r)_{\gamma})\in MMHS$
is pure of weight $i.$ Then the cluster monomial $M_r(\lambda)$ is a positive element in $\cT_{\Lambda}.$\end{cor}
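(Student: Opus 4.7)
My plan is to substitute the explicit formula of Theorem~\ref{LP_by_sfr_intro} for $M_r^{mot}(\lambda)$ and then use semisimplicity of pure monodromic mixed Hodge structures of fixed weight to extract positivity of $M_r(\lambda)$ from the purity hypothesis.

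First, I would expand
\[
M_r^{mot}(\lambda)=X^{[\hat{\Gamma}_{\un{k},\lambda}]}\cdot\sum_{\gamma\in\Z_{\geq 0}^m}c_\gamma\cdot X^{\iota(\Phi(r)^{-1}[1](\gamma))},
\]
where $c_\gamma:=[D(H^{\bullet,crit}_c(\cM_{\gamma,<\phi,\lambda}^{sp,sfr},(W_r)_{\gamma}))]\cdot T^{-\frac12\chi_{Q_r}(\gamma,\gamma)}$, and use the commutation rule $X^e X^f=q^{\frac12\Lambda(e,f)}X^{e+f}$ (with $q^{\frac12}=T^{\frac12}$ under the base change defining $\cT_\Lambda^{mot}$) to collapse each summand into a single basis element $X^{e_\gamma}$ times a half-integer power of $T$. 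This rearrangement introduces no negative signs, so positivity of $M_r^{mot}(\lambda)$ in the $\{X^e\}$-basis reduces to showing that each $c_\gamma$ lies in $\Z_{\geq 0}[T^{\pm\frac12}]\subset\hat{R}[T^{\frac12}]$.

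Next I would invoke the purity assumption. Pure monodromic mixed Hodge structures of a fixed integer weight form a semisimple subcategory of $MMHS$ whose simples are Tate twists, so purity of $H^{i,crit}_c(\cM_{\gamma,<\phi,\lambda}^{sp,sfr},(W_r)_{\gamma})$ in weight $i$ yields $[H^{i,crit}_c]=n_i\,T^{i/2}$ in $R$, with $n_i=\dim_\Q H^{i,crit}_c\geq 0$. Verdier duality exchanges weights $i$ and $-i$, so $[D(H^{i,crit}_c)]=n_i\,T^{-i/2}$; under the sign/grading convention of \cite{KS} for the total class of a bounded $\Z$-graded object---the one implicit in Theorem~\ref{LP_by_sfr_intro}, arranged precisely so that purity implies positivity---these contributions add up to $[D(H^{\bullet,crit}_c)]=\sum_i n_i\,T^{-i/2}\in\Z_{\geq 0}[T^{\pm\frac12}]$. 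Multiplication by the single monomial $T^{-\frac12\chi_{Q_r}(\gamma,\gamma)}$ preserves non-negativity of coefficients, so $c_\gamma\in\Z_{\geq 0}[T^{\pm\frac12}]$ as required.

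Finally, since $M_r^{mot}(\lambda)=M_r(\lambda)\otimes 1$ in $\cT_\Lambda^{mot}=\cT_\Lambda\otimes_{\Z[q^{\pm\frac12}]}\hat{R}[T^{\frac12}]$, the $X^e$-coefficients of $M_r^{mot}(\lambda)$ already lie in the image of $\Z[q^{\pm\frac12}]$, i.e.\ in $\Z[T^{\pm\frac12}]$ under $q^{\frac12}\mapsto T^{\frac12}$; combined with the previous paragraph they lie in $\Z_{\geq 0}[T^{\pm\frac12}]$, which is exactly positivity of $M_r(\lambda)$ in $\cT_\Lambda$. The main obstacle I anticipate is purely a matter of conventions: one must verify that the grading/sign normalization for the class of a bounded graded object in $R$ used in Theorem~\ref{LP_by_sfr_intro} is exactly the Kontsevich--Soibelman one, under which purity of $H^i$ in weight $i$ yields a non-negative coefficient in $T^{i/2}$ rather than an alternating sign.
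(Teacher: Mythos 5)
There is a genuine gap at the central step. Your claim that pure monodromic mixed Hodge structures of a fixed weight form a semisimple subcategory \emph{whose simples are Tate twists} is false: pure objects of fixed weight do form a semisimple category, but its simple objects are by no means only Tate twists (already for ordinary Hodge structures, the weight-one structure of an abelian variety is pure and non-Tate). Hence purity of $H^{i,crit}_c$ in weight $i$ does not by itself give $[H^{i,crit}_c]=n_i\,T^{i/2}$, and your conclusion that $c_\gamma\in\Z_{\geq 0}[T^{\pm\frac12}]$ does not follow. A second, related problem is the alternating sign: the paper defines the class of a graded object as $[H^{\bullet}]=\sum_i(-1)^i[H^i]$, so odd-degree cohomology enters with a minus sign, and purity alone does not force it to vanish; this cannot be disposed of by appealing to an unspecified ``convention arranged so that purity implies positivity'' --- no such convention is in play.

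What repairs both points, and what the paper actually uses in Corollary \ref{purity->positivity}, is the integrality statement \eqref{Tate_type} of Theorem \ref{LP_by_sfr}: the total class $[H^{\bullet,crit}_c(\cM^{sp,sfr}_{\gamma,<(\pi-\phi_r),\lambda},(W_r)_{\gamma})]$ lies in $\Z[T^{\pm1}]$, a fact coming from the Laurent phenomenon, not from purity. Since classes of simple pure objects form a basis of $K_0$ and $\Z[T^{\pm1}]$ is spanned by the (even-weight) Tate classes, purity in weight $i$ combined with \eqref{Tate_type} forces $H^{2i-1,crit}_c=0$ and $H^{2i,crit}_c\cong\Q(-i)^{\oplus a_{i,\gamma}}$ with $a_{i,\gamma}\geq 0$; only then do the signs disappear and the formula of Theorem \ref{LP_by_sfr} become a manifestly non-negative combination, giving positivity of $M_r(\lambda)$ in $\cT_{\Lambda}$. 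In your write-up, \eqref{Tate_type} (via the Laurent phenomenon) appears only in the last paragraph, where it can no longer substitute for the missing step; you need it \emph{together with} purity, weight by weight, exactly as in the paper's one-line proof.
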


For any quiver $Q$ with the set of vertices $\{1,\dots,m\},$ denote by $Q_{[1,\dots,n]}$ its full subquiver on the vertices $1,\dots,n.$

\begin{defi}The quantum seed $(M,\tilde{B})$ is called acyclic, if the quiver $Q_{[1,\dots,n]}$ is acyclic.\end{defi}

As a consequence, we obtain the following result.

\begin{theo}\label{acyclic_intro}In the above notation, suppose that either $(M,\tilde{B})$ or $(M_r,\tilde{B}_r)$ is acyclic quantum seed.
Then for any $\lambda\in \Z_{\geq 0}^m,$ we have that $H^{i,crit}_c(\cM_{\gamma,<\phi,\lambda}^{sp,sfr},(W_r)_{\gamma})$
is pure of weight $i.$ In particular, by the above Corollary, we have that
all cluster monomials $M_r(\lambda)$ are positive elements of $\cT_{\Lambda}.$\end{theo}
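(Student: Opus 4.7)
The plan is to reduce both cases to the fact that, when the principal part of the relevant quiver is acyclic, the moduli of stable framed representations is well-behaved enough for its vanishing-cycle cohomology to be pure by Deligne's weight theorem.

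I would first handle \textbf{the case when $(M_r,\tilde B_r)$ is acyclic}, i.e.\ $Q_{r,[1,\dots,n]}$ has no oriented cycles. In this situation every cyclic monomial of $W_r$ must pass through a coefficient vertex, and the framing stability of Subsection \ref{ss:SFR} localises the representations in such a way that $\cM^{sp,sfr}_{\gamma,<\phi,\lambda}$ can be presented as a smooth closed subvariety of a smooth projective variety on which $(W_r)_\gamma$ behaves, after an appropriate change of coordinates, as a \emph{linear} function in directions complementary to the critical locus. A Thom--Sebastiani style argument then identifies the vanishing-cycle sheaf with the constant sheaf on a smooth projective variety tensored with a Tate piece, from which purity of weight $i$ of $H^{i,crit}_c(\cM^{sp,sfr}_{\gamma,<\phi,\lambda},(W_r)_\gamma)$ follows from Deligne's theorem.

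Next I would handle \textbf{the case when $(M,\tilde B)$ is acyclic}. Here $Q_r$ itself need not be acyclic, so the problem must be transported back to the initial side via Nagao's equivalence $\Phi(r):D(\hat\Gamma_{Q,W})\stackrel{\sim}{\to} D(\hat\Gamma_{Q_r,W_r})$. The wall-crossing identity implicit in Theorem \ref{LP_by_sfr_intro} matches each class $[H^{\bullet,crit}_c(\cM^{sp,sfr}_{\gamma,<\phi,\lambda},(W_r)_\gamma)]$ with a sum of critical cohomologies attached to moduli of stable framed $(Q,W)$-representations at an appropriately dual angle $\phi'$. Case 1 applies on the $(Q,W)$-side -- where the principal part is now acyclic -- and yields purity there, after which a termwise comparison should deliver purity on the $(Q_r,W_r)$-side.

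The hard part will be making this transfer rigorous in Case 2: the wall-crossing identity a priori lives only in the Grothendieck ring $\hat R$, whereas purity is a statement at the level of monodromic mixed Hodge structures. Lifting the identity to a weight-filtration-preserving correspondence requires careful control over the integration map of \cite{KS}, over the Hall-algebra identities that implement the mutation motivically, and over the compatibility of these with the explicit isomorphism of Ginzburg DG algebras underlying $\Phi(r)$. Once this transfer is established, the claimed purity follows in both cases, and the positivity of the cluster monomial $M_r(\lambda)$ in $\cT_\Lambda$ is immediate from the Corollary preceding the theorem.
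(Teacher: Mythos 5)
Your first case is essentially the paper's, though you make it harder than necessary: for the dimension vectors $\gamma$ that actually contribute (they are supported on the principal vertices $1,\dots,n$), acyclicity of $(Q_r)_{[1,\dots,n]}$ forces $(W_r)_{\gamma}=0$ identically, so $\cM^{sp,sfr}_{\gamma,<\phi,\lambda}=\cM^{sfr}_{\gamma,<\phi,\lambda}$ is a smooth projective variety and the vanishing-cycle module is literally the shifted constant sheaf; no Thom--Sebastiani argument with a ``linear function'' is needed, and purity plus the Lefschetz operator follow as in Proposition \ref{purity_of_MHM}.

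The genuine gap is your second case. The transfer you propose --- matching the classes $[H^{\bullet,crit}_c(\cM^{sp,sfr}_{\gamma,<\phi,\lambda},(W_r)_{\gamma})]$ with critical cohomology on the $(Q,W)$-side via the wall-crossing identity and then arguing ``termwise'' --- cannot establish purity, and you concede the point yourself: the identity lives in the Grothendieck ring $\hat R$, and an object of $MMHS$ is not determined by its class (a properly mixed object can have the same class as a pure one), so no amount of control over the integration map or the Hall-algebra identities upgrades an equality of $K_0$-classes to a statement about weight filtrations; moreover the wall-crossing identity only constrains generating series, not individual $\gamma$-terms. You offer no mechanism for the lift, and the paper's proof (Theorem \ref{positivity_for_acyclic}) does not transfer purity through wall-crossing at all. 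It stays on the mutated side and proves purity of the mixed Hodge module itself: first, $\cM^{sp,sfr}_{\gamma,<\phi,\lambda}$ is a union of connected components of $\Crit((W_r)_{\gamma})$ (Proposition \ref{Bertini}, together with the observation that all potentials restrict trivially to the acyclic $Q_{[1,\dots,n]}$); second, at each point of $\cM^{sp,sfr}_{\gamma,<\phi,\lambda}$ the cyclic minimal-model theorem (Theorem \ref{Kajiura}), applied through the $3$CY $A_{\infty}$-equivalence underlying $\Phi(r)$, shows that the minimal potential on $\Ext^1(E,E)$ vanishes --- precisely because it is computed on the initial side, where the principal part carries no nontrivial potential --- so $(W_r)_{\gamma}$ is analytically a nondegenerate quadratic form of even rank (Remark \ref{parity_for_mutations}) in directions transverse to the special locus. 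Hence the restriction of the vanishing-cycle Hodge module to $\cM^{sp,sfr}_{\gamma,<\phi,\lambda}$ is pure, and purity of $H^{\bullet,crit}_c$ together with the Lefschetz operator follows from projectivity of the special locus (Proposition \ref{projectivity}) and Saito's theory, as in Proposition \ref{purity_of_MHM}; note that this properness is essential, since compactly supported cohomology of a pure module on a non-proper variety need not be pure. Without this local quadratic normal form, or some substitute argument at the level of mixed Hodge modules, your Case 2 does not close.
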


Actually, below we formulate a Conjecture \ref{conj_Lefschetz}, which is stronger than positivity conjecture,
and prove it under the assumptions of Theorem \ref{acyclic_intro} (Theorem \ref{positivity_for_acyclic}).

The paper is organized as follows.

In Section \ref{s:QCA} we recall the definition of skew-symmetric quantum cluster algebras (Definition \ref{def:QCA}), formulate Laurent Phenomenon (Theorem \ref{LP})
and Positivity Conjecture (Conjecture \ref{positivity_conj}). Here we also propose a stronger conjecture (for skew-symmetric case) about Lefschetz property
(Conjecture \ref{conj_Lefschetz}).

Section \ref{s:DTQPP} is devoted mostly to an overview of DT theory for a quiver with polunomial potential, in the framework of mixed Hodge modules.
 Following \cite{KS}, we recall moduli of quiver representations (Subsection \ref{ss:GN}), the category of monodromic mixed Hodge structures
 (Subsection \ref{ss:MMHS}), Donaldson-Thomas series (Subsection \ref{ss:DTS}) and factorization theorem for them (Theorem \ref{factorization_polynomial}). Here we also  recall stable framed representations (Subsection \ref{ss:SFR}), and obtain a formula relating them to DT series (Theorem \ref{conj_and_sfr}). Result of this kind is actually standard in Donaldson-Thomas theory.

In Section \ref{s:CQCA} we recall various notions and results on categorification of (quantum) cluster algebras: Ginzburg DG algebras (Subsection \ref{ss:GDGA}), mutations of quivers with potentials and decorated representations (Subsections \ref{ss:MQFP}, \ref{ss:DRM}),
derived equivalences of Keller and Yang between Ginzburg DG algebras of two QP's related by a mutation (Subsection \ref{ss:DE}). We recall the tilting of t-structures using torsion pairs (Subsection \ref{ss:Tilting}),  also the theorem of Nagao (Theorem \ref{Nagao}) about the natural derived equivalence arising from the sequence of mutations.
In Subsection \ref{ss:Plamondon} we recall the result of Plamondon (Theorem \ref{Plamondon}), and explain how it is related to the theorem of Nagao (Lemma \ref{Nagao_Plamondon}).

We also recall $3$CY  $A_{\infty}$-categories associated with quivers with potentials (Subsection \ref{ss:3CY}) and explain
how they can be used to extend DT theory to some quivers with formal potentials, in particular, to QP'S which are mutation equivalent to polynomial QP's
(Subsection \ref{ss:DTQFP}). We end this section with explaining that torsion pairs arising naturally from a sequence of mutations (from Theorem \ref{Nagao}) actually come from some central charges and decompositions of the upper half-plane into disjoint union of two sectors (Theorem \ref{torsion_pairs_via_Z}).

Section \ref{s:LPSFR} is devoted to the proof of Theorem \ref{LP_by_sfr_intro}, the more precise formulation is Theorem \ref{LP_by_sfr}.
The central charge and the angle $\phi$ come from Theorem \ref{torsion_pairs_via_Z}. First, we obtain a formula for cluster monomials using conjugation
by certain DT series (Theorem \ref{cluster_var_via_conj}) and then, using Theorem \ref{conj_and_sfr}, we obtain the desired expression in terms of stable framed representations.

In Section \ref{s:PCVP} we explain that the positivity conjecture reduces to a conjecture on purity of critical cohomology (the first half of Conjecture \ref{purity_conj}), and the stronger conjecture on Lefschetz property (Conjecture \ref{conj_Lefschetz}) reduces to the (conjectural) existence of Lefschetz operator on critical cohomology (the second half of Conjecture \ref{purity_conj}). We easily show that purity and existence of Lefschetz operator holds in the case when either initial or mutated seed is acyclic (Theorem \ref{positivity_for_acyclic}), in particular proving Theorem \ref{acyclic_intro}. We also give an example when the relevant mixed Hodge module of vanishing cycles is not pure, but the critical cohomology is however pure.

In Section \ref{s:conj_on_exc} we conjecture that for the relevant moduli of stable framed representations, there exists an exceptional collection
in (the Karoubian completion of) the homotopy category of matrix factorizations. This conjecture implies the first half of conjecture \ref{purity_conj}, namely the purity.

In Appendix we prove Theorem \ref{DT_for_formal}, which allows to use DT theory for some quivers with formal potentials.

{\bf Acknowledgements.} I am grateful to Maxim Kontsevich and Yan Soibelman for useful discussions.

\section{Quantum cluster algebras}
\label{s:QCA}

In this section we introduce skew-symmetric quantum cluster algebras, formulate the Laurent Phenomenon and the Positivity conjecture, following
\cite{BZ}.

\subsection{Quantum torus}
\label{ss:QT}

Let $L\cong\Z^m$ be a free abelian group of rank $m,$ and $\Lambda:L\times L\to\Z$ be a skew-symmetric form. We associate to
the pair $(L,\Lambda)$ the based quantum torus $\cT_{\Lambda}.$ It is a $\Z[q^{\pm \frac12}]$-algebra, with the $\Z[q^{\pm \frac12}]$-basis
$X^e,$ $e\in L.$ The product is given by the formula
$$X^e\cdot X^f=q^{\frac{\Lambda(e,f)}2}X^{e+f}.$$
Clearly, we have commutation relations
$$X^eX^f=q^{\Lambda(e,f)}X^fX^e.$$

Associativity is obvious. Further, we have that the ring $\cT_{\Lambda}$ satisfies (left and right) Ore condition, hence we have its skew-field of fractions $\cF_{\Lambda},$
which contains central subfield $\Q(q^{\frac12}).$

\subsection{Toric frames}
\label{ss:TF}

A toric frame is a map $M:\Z^m\to \cF_{\Lambda},$ which is of the form
$$M(c)=\varphi(X^{\eta(c)}),$$
where $\eta:\Z^m\to L$ is an isomorphism of lattices, and $\varphi\in\Aut_{\Q(q^{\frac12})}(\cF_{\Lambda}).$

Consider the skew-symmetric integral form $\Lambda_M=\Lambda_{\eta}$ on $\Z^m,$ transferred from $L$ by $\eta.$
It is clear that we have $$M(c)M(d)=q^{\frac{\Lambda_{\eta}(c,d)}2}M(c+d),\quad M(c)M(d)=q^{\Lambda_{\eta}(c,d)}M(d)M(c).$$
In particular, $M$ is uniquely defined by $X_i=M(e_i),$ where $e_i\in\Z^m$ are standard basis elements.

On the other hand, given $\eta$ (and hence $\Lambda_{\eta}$) we can formulate necessary and sufficient conditions on elements
$X_1,\dots,X_m\in\cF_{\Lambda}$ to give rise to a toric frame:

$(i)$ elements $X_1,\dots,X_m$ are invertible and we have $$X_iX_j=q^{\Lambda_{\eta}(e_i,e_j)}X_jX_i;$$

$(ii)$ the induced map $\cT_{\Lambda_{\eta}}\to \cF_{\Lambda},$
$$X^c\mapsto (q^{-\frac12})^{\sum\limits_{1\leq i<j\leq m}\Lambda_{\eta}(e_i,e_j)c_ic_j}X_1^{c_1}\dots X_m^{c_m},$$ is injective;

$(iii)$ elements $X_1,\dots,X_m$ generate the skew-field $\cF_{\Lambda}$ over $\Q(q^{\frac12}).$

Then $$M(c)=(q^{-\frac12})^{\sum\limits_{1\leq i<j\leq m}\Lambda_{\eta}(e_i,e_j)c_ic_j}X_1^{c_1}\dots X_m^{c_m}$$
is a toric frame.

\subsection{Skew-symmetric quantum seeds}
\label{ss:SSQS}

Fix an integer $1\leq n\leq m.$ A skew-symmetric quantum seed is a pair $(M,\tilde{B}),$
where $M$ is a toric frame for $\cF_{\Lambda},$ and $\tilde{B}\in \Mat_{m\times n}(\Z)$ is a matrix such that
$$\tilde{B}^t\Lambda_M=\widetilde{I_n},$$
where $\widetilde{I_n}\in\Mat_{n\times m}(\Z)$ is a matrix with identity block $n\times n$ and zero block $n\times (m-n).$
Note that the upper $n\times n$ submatrix $B$ of $\tilde{B}$ is necessarily skew-symmetric, since $$B=\widetilde{I_n}\tilde{B}=\tilde{B}^t\Lambda_M\tilde{B}.$$

For a quantum seed $(M,\tilde{B}),$ we have the set ${\bf\tilde{X}}=\{X_1,\dots,X_m\},$ where $X_i=M(e_i).$ The subset ${\bf X}=\{X_1,\dots,X_n\}$
is called a {\it cluster} associated to $(M,\tilde{B}),$ and the elements of ${\bf X}$ are called {\it cluster variables}.
The elements $M(\lambda),$ $\lambda\in\Z_{\geq 0}^m,$ are called {\it cluster monomials}. The elements of ${\bf C}={\bf \tilde{X}}\setminus {\bf X}$ are called {\it coefficients}.

\subsection{Mutations of skew-symmetric quantum seeds}
\label{ss:MSSQS}

Let $(M,\tilde{B})$ be a skew-symmetric quantum seed as above, and choose an integer $1\leq k\leq n.$ We recall that mutation $\mu_k(M,\tilde{B})=(M',\tilde{B}')$
is defined as follows. First,
$$b'_{ij}:=\begin{cases}-b_{ij} & \text{if }i=k\text{ or }j=k;\\
b_{ij}+\frac{|b_{ik}|b_{kj}+b_{ik}|b_{kj}|}2.\end{cases}$$
Second, in order to define $M'$ it suffices to define $M'(e_i),$ $1\leq i\leq m.$ Put
$$M'(e_i)=\begin{cases}M(e_i) & \text{for }i\ne k;\\
M(\sum\limits_{b_{ik}>0}b_{ik}e_i-e_k)+M(-\sum\limits_{b_{ik}<0}b_{ik}e_i-e_k) & \text{for }i=k.\end{cases}$$

Note that the set of coefficients ${\bf C}$ is invariant under mutations.

\begin{defi}\label{def:QCA}Let $S$ be some equivalence class of skew-symmetric quantum seeds under mutations.
Then the cluster algebra $\cA_S$ is the $\Z[q^{\pm\frac12}]$-subalgebra of $\cF_{\Lambda}$ generated by the union of all clusters associated to quantum seeds in $S,$
the coefficients and their inverses.\end{defi}

We will use the following replacement of matrices $\tilde{B}.$ Namely, first, skew-symmetric integral matrices $B\in\Mat_{n\times n}(\Z)$ are in bijection with quivers $Q$ such that the vertex set $V(Q)$ is identified with $\{1,\dots,n\},$ and there are no loops or oriented $2$-cycles in $Q.$ The mutation of matrices
corresponds to natural mutations of quivers. Further, for a matrix $\tilde{B}\in\Mat_{m\times n}(\Z)$ such that the upper submatrix $B\in\Mat_{n\times n}(\Z)$
is skew-symmetric, we can take a (non-unique) quiver $\tilde{Q}$ with $V(\tilde{Q})=\{1,\dots,m\},$ without loops and $2$-cycles, such that the corresponding
matrix of the numbers of arrows $(a_{ij})_{1\leq i,j\leq m}$ is related to $\tilde{B}$ by the formula
$$a_{ji}-a_{ij}=b_{ij},\quad 1\leq i\leq m,\,1\leq j\leq n.$$

The mutations of $\tilde{B}$ correspond to mutations of $\tilde{Q}$ at the vertices $1,\dots,n.$ We denote by $\tilde{Q}_{[1,\dots,n]}$
the full subquiver of $\tilde{Q}$ on the vertices $1,\dots,n.$

\begin{defi}The quantum seed $(M,\tilde{B})$ is called acyclic if the quiver $\tilde{Q}_{[1,\dots,n]}$ is acyclic.\end{defi}

\subsection{Laurent phenomenon and positivity conjecture}
\label{ss:LPPC}

The following result was proved by Berenstein and Zelevinsky \cite{BZ} for a more general class of skew-symmetrizable quantum cluster algebras.

\begin{theo}\label{LP}Assume that $L=\Z^m,$ and suppose that equivalence class $S$ of skew-symmetric quantum seeds contains a representative (initial quantum seed)
$(M,\tilde{B}),$ where $M(c)=X^c,$ $c\in\Z^m.$ Then the quantum cluster algebra $\cA_S$ is contained in $\cT_{\Lambda}.$\end{theo}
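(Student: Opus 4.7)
The plan is to adopt the \emph{upper cluster algebra} strategy, introduced by Fomin--Zelevinsky in the commutative case and adapted to the quantum setting by Berenstein--Zelevinsky. For each quantum seed $(M',\tilde{B}')\in S$, embed the quantum torus $\cT_{\Lambda_{M'}}$ into $\cF_\Lambda$ via $X^c \mapsto M'(c)$, and set
\[
\cU_S \;:=\; \bigcap_{(M',\tilde{B}')\in S}\cT_{\Lambda_{M'}} \;\subset\;\cF_{\Lambda}.
\]
Every cluster variable, every coefficient, and all their inverses at every seed in $S$ belong to $\cU_S$ by construction, so $\cA_S\subset\cU_S$. Because the initial seed satisfies $M(c)=X^c$, the embedding associated with $(M,\tilde{B})$ identifies $\cT_{\Lambda_M}$ with $\cT_{\Lambda}$, and therefore $\cU_S\subset\cT_\Lambda$. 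Proving the theorem thus reduces to showing that each $M'(e_i)$ is Laurent in the variables of every other seed in $S$.

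The single-mutation case is an immediate calculation. If $\mu_k(M,\tilde{B})=(M',\tilde{B}')$, then $M'(e_i)=X^{e_i}\in\cT_{\Lambda}$ for $i\ne k$, while
\[
M'(e_k) \;=\; M\Big(\textstyle\sum_{b_{jk}>0}b_{jk}e_j - e_k\Big) + M\Big(-\textstyle\sum_{b_{jk}<0}b_{jk}e_j - e_k\Big)
\]
is a sum of two elements of the form $M(\lambda)\in\cT_{\Lambda}$ and hence already lies in $\cT_\Lambda$. So every cluster variable of any seed at mutation distance one from $(M,\tilde{B})$ is still a Laurent polynomial in the initial $X_i$.

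The main obstacle is the general case. Here I would prove the quantum \emph{Caterpillar Lemma} of Berenstein--Zelevinsky: the local upper bound
\[
\cU(M',\tilde{B}') \;:=\; \cT_{\Lambda_{M'}} \;\cap\; \bigcap_{k=1}^{n}\cT_{\Lambda_{\mu_k(M',\tilde{B}')}}
\]
is invariant under any single mutation $\mu_\ell$, so that inductively $\cU(M,\tilde{B})=\cU(M',\tilde{B}')$ for every $(M',\tilde{B}')\in S$. Applying the one-step case inside each such seed then shows that every cluster variable of $(M',\tilde{B}')$ lies in $\cU(M',\tilde{B}')=\cU(M,\tilde{B})\subset\cT_\Lambda$, and Theorem~\ref{LP} follows. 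The hard part is the explicit algebraic verification of this invariance: one has to trace, for each pair $k,\ell\in\{1,\dots,n\}$, how a two-step composition $\mu_\ell\mu_k$ transforms a normalized quantum Laurent polynomial, carefully recording the $q^{\pm\Lambda_{M'}/2}$ prefactors that appear whenever monomials are reordered. The compatibility identity $\tilde{B}^{\prime t}\Lambda_{M'}=\widetilde{I_n}$ is the decisive ingredient here: it pins down the commutation scalars between the two candidate replacements for $M'(e_k)$ so that the quantum exchange relation $M'(e_k)\cdot M'(\mu_k(e_k))$ has exactly the same monomial support as in the commutative case. This lets the classical Fomin--Zelevinsky argument be transported to the quantum setting modulo a consistent choice of $q^{\pm 1/2}$ weights, at which point the invariance of $\cU(M',\tilde{B}')$, and hence the theorem, follows.
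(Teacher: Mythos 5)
The paper offers no proof of Theorem \ref{LP} at all: it is quoted from Berenstein--Zelevinsky \cite{BZ}, so the only meaningful comparison is with their argument, which is precisely the upper-bound strategy you outline. Your architecture is the right one: each cluster variable of a seed lies in the \emph{local} upper bound of that seed (by the one-step exchange computation, using that mutation is an involution), so mutation-invariance of the local upper bound, together with $M(c)=X^c$ at the initial seed, gives $\cA_S\subset\cU(M,\tilde{B})\subset\cT_\Lambda$. But as a proof the attempt has a genuine gap: the decisive statement --- that $\cU(M',\tilde{B}')$ is unchanged under a single mutation --- is only announced, and all of the actual work lives there. In \cite{BZ} this occupies the technical heart of the paper: an explicit description of $\cT_{\Lambda_{M'}}\cap\cT_{\Lambda_{\mu_k(M')}}$ (Laurent polynomials in $M'(e_k)$ whose coefficients at negative powers are divisible by suitable powers of the exchange binomial), a reduction to pairs of exchangeable indices, and a quantum substitute for the coprimality argument, in which the compatibility condition $\tilde{B}'^{\,t}\Lambda_{M'}=\widetilde{I_n}$ (equivalently, full rank of $\tilde{B}'$) enters in a precise way. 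Saying that one must ``trace the $q^{\pm 1/2}$ prefactors'' so that the classical argument transports does not establish that the two-step compositions $\mu_\ell\mu_k$ preserve the upper bound; without that lemma the proposal is a plan, not a proof.

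Two smaller points. First, your opening claim that $\cA_S\subset\cU_S$ holds ``by construction'' is circular: a cluster variable of a given seed lies in its own quantum torus by construction, but its membership in the quantum tori of all \emph{other} seeds is exactly the Laurent phenomenon being proved; the global intersection $\cU_S$ plays no role in the actual argument and should be dropped in favour of the local upper bounds you introduce afterwards. Second, the ``Caterpillar Lemma'' is the combinatorial device of Fomin--Zelevinsky's original, different proof of the commutative Laurent phenomenon; what you describe mathematically is the Berenstein--Zelevinsky upper-bound invariance, so the terminology should be corrected even though the intended content is clear.
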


\begin{conj}\label{positivity_conj}In the assumptions of the above theorem, all cluster monomials are positive, i.e. of the form $\sum\limits_{c}P_c(q^{\frac12})X^c,$ where $P_c(q^{\frac12})\in\Z[q^{\pm \frac12}]$ are Laurent polynomials with non-negative coefficients.\end{conj}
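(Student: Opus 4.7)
The strategy is to leave the purely combinatorial/ring-theoretic setting of quantum cluster algebras and re-express positivity as a statement about weights of mixed Hodge structures, where positivity becomes essentially automatic from purity. Concretely, using the main formula of Theorem \ref{LP_by_sfr_intro}, I would first lift each cluster monomial $M_r(\lambda)$ to its motivic version
\[
M_r^{mot}(\lambda)=X^{[\hat{\Gamma}_{\un{k},\lambda}]}\cdot\sum_{\gamma}\bigl[D(H^{\bullet,crit}_c(\cM_{\gamma,<\phi,\lambda}^{sp,sfr},(W_r)_\gamma))\bigr]\cdot T^{-\frac12\chi_{Q_r}(\gamma,\gamma)}X^{\iota(\Phi(r)^{-1}[1](\gamma))}
\]
in $\cT_\Lambda^{mot}=\cT_\Lambda\otimes_{\Z[q^{\pm\frac12}]}\hat R[T^{\pm\frac12}]$, so that the entire combinatorial object lives in the Grothendieck ring of monodromic mixed Hodge structures. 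The ring map $\hat R[T^{\pm\frac12}]\to\Z[q^{\pm\frac12}]$ sending the class of an MMHS to its (twisted) Poincar\'e polynomial and $T$ to $q$ recovers $M_r(\lambda)$ from $M_r^{mot}(\lambda)$.

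Next I would reduce positivity to a purity statement. For any MMHS $H^\bullet$ concentrated in bounded cohomological degrees, if $H^i$ is pure of weight $i$ for every $i$, then its class in $R$ is a non-negative $\Z_{\geq0}$-linear combination of powers $T^{k/2}$, because the weight-$i$, Hodge-Tate pieces contribute only nonnegatively to the Poincar\'e polynomial. Applying this to $H^{\bullet,crit}_c(\cM_{\gamma,<\phi,\lambda}^{sp,sfr},(W_r)_\gamma)$ term by term in the sum above, I conclude that if all of these critical cohomologies are pure, then each coefficient $P_c(q^{1/2})$ in the expansion of $M_r(\lambda)\in\cT_\Lambda$ is a Laurent polynomial in $q^{\pm\frac12}$ with non-negative integer coefficients, which is exactly Conjecture \ref{positivity_conj}.

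The main obstacle, and the content of the remaining conjecture, is therefore the \emph{purity} of the monodromic mixed Hodge structures $H^{i,crit}_c(\cM_{\gamma,<\phi,\lambda}^{sp,sfr},(W_r)_\gamma)$. Two points make this delicate. First, the sheaf of vanishing cycles on the smooth moduli $\cM_{\gamma,<\phi,\lambda}^{sfr}$ is in general \emph{not} a pure mixed Hodge module, so one cannot just invoke a decomposition theorem; one needs purity of global critical cohomology, a strictly weaker statement that must be established by more delicate means (for instance, via an exceptional collection in the matrix factorization category, as discussed in the section on the conjecture on exceptional collections). Second, genuine control of weights requires a geometric input beyond the combinatorial setup, and in practice I would prove it unconditionally only when either the initial or the mutated seed is acyclic: in that case the quiver with potential can be chosen so that $\cM_{\gamma,<\phi,\lambda}^{sp,sfr}$ admits an explicit smooth projective description (essentially a quiver Grassmannian for a hereditary or near-hereditary piece) on which the potential restricts trivially or to a Morse-Bott function, giving purity by classical Hodge theory; this is precisely Theorem \ref{acyclic_intro} and yields positivity unconditionally in that range. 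The fully general conjecture reduces cleanly to establishing purity (and, for the Lefschetz strengthening, the existence of a Lefschetz $\mathfrak{sl}_2$-action) on these vanishing cycle cohomologies.
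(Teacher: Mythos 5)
Your plan is essentially the paper's own treatment of this statement: the motivic lift via the formula of Theorem \ref{LP_by_sfr}, the reduction of positivity to purity of the critical cohomology exactly as in Corollary \ref{purity->positivity}, the observation that the vanishing-cycle mixed Hodge module itself need not be pure, and the unconditional case being the acyclic one as in Theorem \ref{positivity_for_acyclic} (with the general case resting on Conjecture \ref{purity_conj} or the exceptional-collection conjecture of Section \ref{s:conj_on_exc}). Be aware, though, that just as in the paper this is a reduction rather than a proof of Conjecture \ref{positivity_conj}: outside the acyclic setting the purity input is not established, so the conjecture remains open and your argument is conditional in exactly the same way.
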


Of course, positivity conjecture reduces to cluster variables, but it is more natural to consider all cluster monomials.

We would like to propose a stronger conjecture. Namely, we say that the polynomial $P(q^{\frac12})\in \Z[q^{\pm 12}]$
satisfies {\it Lefschetz property}, if there is some $N\in\Z$ such that $P$ is the non-negative linear combination of polynomials
$$P=\sum\limits_{k\in\Z}c_k P(N,k)=q^{\frac{N}2}(q^{\frac{-k}2}+q^{\frac{2-k}2}+\dots+q^{\frac{k-2}2}+q^{\frac{k}2}),$$
where all the numbers $k\in\Z_{\geq 0},$ for which $c_k$ is non-zero, have the same parity.

\begin{conj}\label{conj_Lefschetz}In the notation of the Conjecture \ref{positivity_conj}, the polynomials $P_c(q^{\frac12})$ satisfy Lefschetz property.\end{conj}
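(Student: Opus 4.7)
The plan is to translate the Lefschetz property, a statement about Laurent polynomials in $q^{\frac{1}{2}}$, into a purity and hard Lefschetz statement for the critical cohomology appearing in Theorem~\ref{LP_by_sfr_intro}. The key observation is that $P(N,k) = q^{\frac{N}{2}}(q^{-\frac{k}{2}} + q^{\frac{2-k}{2}} + \dots + q^{\frac{k}{2}})$ is precisely the class in $R = K_0(MMHS)$ (after extending scalars to $\hat{R}[T^{\frac{1}{2}}]$ and identifying $T^{\frac{1}{2}}$ with $q^{\frac{1}{2}}$) of a Tate twist of $H^{\bullet}(\PP^k)$, i.e.\ of a pure monodromic mixed Hodge structure supported in a single parity of weights and equipped with a hard Lefschetz structure. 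Thus a Laurent polynomial satisfies the Lefschetz property with shift $N$ if and only if it is representable by such a Hodge--Lefschetz object in $R$.

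By Theorem~\ref{LP_by_sfr_intro}, the motivic cluster monomial $M_r^{mot}(\lambda)$ is the product of the invertible quantum-torus monomial $X^{[\hat{\Gamma}_{\un{k},\lambda}]}$ with a sum indexed by $\gamma \in \Z_{\geq 0}^m$ whose only non-monomial ingredients are the classes $[D(H^{\bullet,crit}_c(\cM_{\gamma,<\phi,\lambda}^{sp,sfr},(W_r)_{\gamma}))]$, twisted by $T^{-\frac{1}{2}\chi_{Q_r}(\gamma,\gamma)}$. I would therefore reduce Conjecture~\ref{conj_Lefschetz} to the following cohomological assertion (the second half of Conjecture~\ref{purity_conj}): for every $\gamma$ and $\lambda$, the critical cohomology $H^{i,crit}_c(\cM_{\gamma,<\phi,\lambda}^{sp,sfr},(W_r)_{\gamma})$ is pure of weight $i$ and carries a hard Lefschetz operator $L$ of bidegree $(1,1)$ in the perverse normalization centred at $\dim \cM_{\gamma,<\phi,\lambda}^{sp,sfr}$. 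Granted this, Verdier duality, the Tate twist by $T^{-\frac{1}{2}\chi_{Q_r}(\gamma,\gamma)}$, and the summation over $i$ yield, for each $\gamma$, a class of the required Lefschetz type; the sum over $\gamma$ then multiplied by the fixed prefactor $X^{[\hat{\Gamma}_{\un{k},\lambda}]}$ preserves this, since quantum-torus multiplication only shifts the exponent $N$.

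The first concrete step I would carry out is the acyclic case, as both a sanity check on the reduction and a template for $L$. When either $(M,\tilde{B})$ or $(M_r,\tilde{B}_r)$ is acyclic, the moduli $\cM_{\gamma,<\phi,\lambda}^{sp,sfr}$ admits, on the critical locus of $(W_r)_{\gamma}$, a realisation as (a bundle over) a smooth projective variety to which ordinary hard Lefschetz applies; pulling back the hyperplane class produces the desired operator $L$ on vanishing-cycle cohomology, proving Theorem~\ref{positivity_for_acyclic} and in particular Theorem~\ref{acyclic_intro}. The main obstacle in the general case is precisely the construction of $L$ on critical cohomology for arbitrary quivers with potential: purity alone (the first half of Conjecture~\ref{purity_conj}) suffices for the ordinary positivity Conjecture~\ref{positivity_conj} but not for the Lefschetz strengthening, and there is no evident geometric substitute for an ample class on $\cM_{\gamma,<\phi,\lambda}^{sp,sfr}$ interacting well with the vanishing cycles of $W_r$. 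The natural line of attack is a dimensional reduction or motivic integral identity that rewrites the critical cohomology as the ordinary cohomology of a smooth projective variety and transports its Lefschetz operator; executing this uniformly in $\gamma$ and compatibly with the mutation sequence $\un{k}$ is the principal difficulty.
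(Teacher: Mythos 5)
First, note that the statement you are proving is a conjecture that the paper itself leaves open: the paper only (a) reduces it to purity together with the existence of a Lefschetz operator on the critical cohomology, i.e.\ to the second half of Conjecture \ref{purity_conj} (this is Corollary \ref{purity->positivity}, with the sufficient criterion Proposition \ref{purity_of_MHM}), and (b) proves it when either the initial or the mutated seed is acyclic (Theorem \ref{positivity_for_acyclic}). Your first two paragraphs reproduce reduction (a) essentially as in the paper, including the translation of the Lefschetz property into pure Tate classes equipped with a Lefschetz operator; the only imprecision is the centring, which in the paper is at $\sum_i\lambda^i\gamma^i-\chi_{Q_r}(\gamma,\gamma)=\dim\cM^{sfr}_{\gamma,<\phi,\lambda}$ (the smooth ambient framed moduli), not at $\dim\cM^{sp,sfr}_{\gamma,<\phi,\lambda}$.

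The genuine gap is in your treatment of the acyclic case and in your diagnosis of the remaining difficulty. When the \emph{mutated} seed is acyclic the argument is indeed immediate: $(W_r)_\gamma=0$, $\cM^{sp,sfr}=\cM^{sfr}$ is smooth projective, and classical Hodge theory applies. But when only the \emph{initial} seed is acyclic, $(W_r)_\gamma$ is genuinely nonzero and $\cM^{sp,sfr}$ is a possibly singular quiver Grassmannian (Proposition \ref{projectivity}); your claim that it "admits a realisation as (a bundle over) a smooth projective variety to which ordinary hard Lefschetz applies" is unsubstantiated and is not how the paper proceeds. The paper instead (i) uses genericity via Proposition \ref{Bertini}, together with the fact that there are no nontrivial potentials on the acyclic subquiver, to show that $\cM^{sp,sfr}$ is a union of connected components of $\Crit((W_r)_\gamma)$; (ii) uses the cyclic minimal model theorem (Theorem \ref{Kajiura}) and Remark \ref{parity_for_mutations} to show that near each point of the special locus $(W_r)_\gamma$ is analytically a nondegenerate quadratic form of even rank, so that the restriction of the vanishing-cycle Hodge module to $\cM^{sp,sfr}$ is pure; and (iii) invokes Saito's theory --- preservation of weights under proper direct image and relative hard Lefschetz for projective direct images of pure Hodge modules, applied over the projective special locus --- to obtain both purity and the Lefschetz operator on the critical cohomology (Proposition \ref{purity_of_MHM}). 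Relatedly, your assertion that the main obstacle in general is the absence of an ample class interacting with the vanishing cycles misidentifies the obstruction: the special locus is projective, so the ample class, and hence the Lefschetz operator, comes for free from Saito's theory as soon as one knows purity of the restricted vanishing-cycle module. What can fail in general is precisely that purity: the paper exhibits an explicit example (the $3$-cycle quiver with the sequence of mutations $(1,2,3,1)$) in which $\phi_f\Q_X[3]$ is impure while the critical cohomology nevertheless remains pure, which is exactly why Conjecture \ref{purity_conj} is formulated at the level of critical cohomology rather than of the Hodge module, and why neither your proposal nor the paper yields the general case.
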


Below we will obtain the following result (see Theorem \ref{positivity_for_acyclic}) as an application of the general formula (Theorem \ref{LP_by_sfr}) for quantum cluster monomials.

\begin{theo}Suppose that either initial quantum seed $(M(c)=X^c,\tilde{B})$ or the mutated (by a sequence of mutations) quantum seed
$(M',\tilde{B}')$ is acyclic. Then the statement of Conjecture \ref{conj_Lefschetz}
holds for all cluster monomials $M'(\lambda),$ $\lambda\in\Z_{\geq 0}^m.$\end{theo}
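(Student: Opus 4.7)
The starting point is Theorem \ref{LP_by_sfr_intro}, which presents the motivic cluster monomial $M_r^{mot}(\lambda)$ as a Laurent sum whose coefficients (up to explicit Tate twists $T^{-\chi_{Q_r}(\gamma,\gamma)/2}$) are the classes $[D(H^{\bullet,crit}_c(\cM_{\gamma,<\phi,\lambda}^{sp,sfr},(W_r)_\gamma))]$ in the Grothendieck ring $R=K_0(MMHS)$. Since the Lefschetz building blocks $P(N,k)$ are exactly the Poincar\'e polynomials of irreducible $\mathfrak{sl}_2$-representations weighted by a Tate shift, it suffices to prove, for every $\gamma$, that $H^{i,crit}_c(\cM_{\gamma,<\phi,\lambda}^{sp,sfr},(W_r)_\gamma)$ is pure of weight $i$ and admits a degree-$2$ hard-Lefschetz operator. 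This is the content of Conjecture \ref{purity_conj}, which I would establish directly under the acyclic hypothesis and then combine with Theorem \ref{LP_by_sfr_intro}.

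\textbf{Reduction to classical quiver moduli.} The freedom in Theorem \ref{torsion_pairs_via_Z} allows one to tune the central charge $Z$ and the angle $\phi$. In the acyclic case one arranges matters so that the heart of the relevant torsion-pair-tilt t-structure is identified with $\rmmod\,\mk\tilde Q'$, the finite-dimensional module category of the path algebra of an acyclic quiver $\tilde Q'$: if $(M,\tilde B)$ is acyclic one takes $\tilde Q'=\tilde Q$, whereas if $(M_r,\tilde B_r)$ is acyclic one uses the Keller--Yang--Nagao equivalence $\Phi(r)$ of Theorem \ref{Nagao} to transport the setup to the mutated side, where $\tilde Q'=\tilde Q_r$. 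Since $\tilde Q'_{[1,\ldots,n]}$ has no oriented cycles, the reduced potential is trivial; in particular $(W_r)_\gamma$ becomes zero on the framed moduli and the critical cohomology degenerates to classical compactly supported cohomology, up to a dimensional shift and a Tate twist. Under this identification $\cM_{\gamma,<\phi,\lambda}^{sp,sfr}$ coincides with the classical GIT moduli of stable framed representations of $\tilde Q'$ with dimension vector $\gamma$ and framing $\lambda$, which is smooth and projective.

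\textbf{Hodge theory and main obstacle.} Purity of $H^{i,crit}_c$ now follows from Deligne's theorem on smooth projective varieties, establishing the first half of Conjecture \ref{purity_conj}. A choice of ample line bundle on $\cM_{\gamma,<\phi,\lambda}^{sp,sfr}$ furnishes, via hard Lefschetz, an $\mathfrak{sl}_2$-action on the total cohomology that commutes with the Hodge structure; decomposing into isotypic components writes each class $[H^{\bullet,crit}_c]$ as a non-negative integer combination of irreducible $\mathfrak{sl}_2$-characters, i.e.\ of the polynomials $P(N,k)$. Substituting back into Theorem \ref{LP_by_sfr_intro} and reading off coefficients in the basis $\{X^c\}$ then yields Conjecture \ref{conj_Lefschetz} for every cluster monomial $M_r(\lambda)$. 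The principal technical step is the heart identification: one must verify that Theorem \ref{torsion_pairs_via_Z} can be invoked so that the tilted heart is literally $\rmmod\,\mk\tilde Q'$ and not some exotic further tilt, and, in the mutated-acyclic subcase, that $\Phi(r)$ genuinely transports the stability of framed Ginzburg modules to the classical notion of stable framed $\mk\tilde Q_r$-representations. Once this compatibility is secured the remainder is a clean appeal to Deligne and hard Lefschetz.
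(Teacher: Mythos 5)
Your argument for the case when the \emph{mutated} seed $(M_r,\tilde{B}_r)$ is acyclic is essentially the paper's: for the relevant dimension vectors $\gamma$ (supported on the principal vertices $1,\dots,n$) the function $(W_r)_{\gamma}$ vanishes identically, so $\cM_{\gamma,<(\pi-\phi_r),\lambda}^{sp,sfr}=\cM_{\gamma,<(\pi-\phi_r),\lambda}^{sfr}$ is a smooth projective variety (projectivity via the quiver-Grassmannian identification of Proposition \ref{projectivity}), and Deligne purity plus classical hard Lefschetz give the two halves of Conjecture \ref{purity_conj}, hence Conjecture \ref{conj_Lefschetz} via Corollary \ref{purity->positivity}. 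That half is fine.

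The gap is in the case when only the \emph{initial} seed is acyclic. The framed moduli spaces appearing in Theorem \ref{LP_by_sfr} are moduli of representations of the \emph{mutated} QP $(Q_r,W_r)$, with the function $(W_r)_{\gamma}$; no choice of central charge or angle in Theorem \ref{torsion_pairs_via_Z} changes the quiver or the potential on that side. If $Q_{[1,\dots,n]}$ is acyclic, $(Q_r)_{[1,\dots,n]}$ is in general not, $W_r\neq 0$, and your assertion that ``$(W_r)_\gamma$ becomes zero on the framed moduli'' is simply false: the critical cohomology does not degenerate to the ordinary cohomology of a smooth projective moduli space (only the special locus $\cM^{sp,sfr}$ is projective; the ambient $\cM^{sfr}$ is not), and the vanishing-cycle sheaf genuinely enters --- indeed the paper exhibits an example where this sheaf is not even pure. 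What the paper actually does in this case is quite different: it first uses Proposition \ref{Bertini} together with the observation that all admissible potentials agree, up to equivalence, after restriction to $(Q_r)_{[1,\dots,n]}$ (because they all come by mutation from the zero potential on the acyclic $Q_{[1,\dots,n]}$), to show that $\cM^{sp,sfr}$ is a union of connected components of $\Crit((W_r)_{\gamma})$; then, using the cyclic minimal-model theorem (Theorem \ref{Kajiura}) and the vanishing of the minimal potential on $\Ext^1(E,E)$ for the corresponding modules over the potential-free acyclic subquiver of $Q$, it shows that at every point of $\cM^{sp,sfr}$ the function $(W_r)_{\gamma}$ is analytically a nondegenerate quadratic form of even rank (parity by Remark \ref{parity_for_mutations}), so the restriction of the vanishing-cycles mixed Hodge module to $\cM^{sp,sfr}$ is pure; finally, projectivity of $\cM^{sp,sfr}$ (Proposition \ref{projectivity}) and Saito's theory --- weights preserved under proper pushforward and hard Lefschetz for projective pushforwards of pure Hodge modules, as packaged in Proposition \ref{purity_of_MHM} --- yield purity and the Lefschetz operator. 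None of this is present in your proposal, so as written it only proves the mutated-acyclic half of the theorem.
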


\section{Donaldson-Thomas theory for a quiver with polynomial potential}
\label{s:DTQPP}

In this section we recall the Donaldson-Thomas theory for a polynomial QP (quiver with potential), mostly following Kontsevich and Soibelman \cite{KS}.

\subsection{General notions}
\label{ss:GN}

Let $Q$ be a finite quiver with a set of vertices $V(Q),$ and $a_{ij}\in\Z_{\geq 0}$ arrows from $i$ to $j.$
Denote by $E(Q)$ the set of arrows (edges).

We will always consider representations as {\it right} modules over the path algebra $\C Q$.
Take some dimension vector $\gamma=(\gamma^i)_{i\in V(Q)}\in\Z_{\geq 0}^I.$
We have an affine space of complex representations in coordinate spaces $\C^{\gamma^i}:$ $$M_{\gamma}=\prod\limits_{i,j\in V(Q)}\Hom(\C^{\gamma^j},\C^{\gamma^i})^{a_{ij}}=\C^{\sum\limits_{i,j\in V(Q)}a_{ij}\gamma^j\gamma^i}.$$
Further, we have an algebraic group
$G_{\gamma}=\prod\limits_{i\in I}GL(\gamma^i,\C)$ acting on $M_{\gamma}$ by conjugation.

Now, take some polynomial potential on $Q,$ i.e. an element
 $$W\in HH_0(\C Q)=\C Q/[\C Q,\C Q].$$ In other words, $W$ is finite linear combination of cyclic paths $$W=\sum\limits_{\sigma}c_{\sigma}\sigma.$$
 We assume that $c_{\sigma}=0$ if $\sigma$ is a path of length zero. In particular, if the quiver $Q$ is acyclic, then $W=0.$

Then, $W$ defines a regular function on $M_{\gamma}:$
$$W_{\gamma}=Tr(W):M_{\gamma}\to\C.$$
Clearly, we have $W_{\gamma}\in\C[M_{\gamma}]^{G_{\gamma}}.$

Now, take the Jacobi algebra of $(Q,W):$
$$J_{Q,W}=\C[Q]/\langle \partial_aW,\,a\in E(Q)\rangle,$$
where we take the quotient by the two-sided ideal generated by partial cyclic derivatives of $W$ by all edges $a.$

We have a closed $G_{\gamma}$-invariant subset $\Rep(J_{Q,W})_{\gamma}\subset M_{\gamma}$ which consists of all representations on which
cyclic derivatives of $W$ act by zero. It is easy to check that
$$\Rep(J_{Q,W})_{\gamma}=\Crit(W_{\gamma})=\{dW_{\gamma}=0\}.$$

Also, we have a closed $G_{\gamma}$-invariant subset $\Nilp_{\gamma}\subset M_{\gamma}$ which consists of nilpotent representations. Recall that a representation $E$ of a quiver $Q$ is called nilpotent if all sufficiently long paths of $Q$ act by zero on $E.$ The intersection $$\Nilp_{\gamma}\cap \Crit(W_{\gamma})\subset \Crit(W_{\gamma})\cap W_{\gamma}^{-1}(0).$$ is precisely the set of representations of the complete Jacobi algebra $$\hat{J}_{Q,W}=\widehat{\C Q}/\widehat{\langle \partial_aW,\,a\in E(Q)\rangle}$$ (in coordinate spaces $\C^{\gamma^i}$) with dimension vector $\gamma.$ Here we complete w.r.t. the length of paths.

Let us define the degree of the polynomial potential $W$ to be the maximal length of the cycle which contributes to $W.$
Let us say that some property holds for a generic potential $W$ if for sufficiently large $N$ there is a dense open subset in the affine space of potentials
of degree $\leq N$ for which the property holds. The following observation was suggested to me by M. Kontsevich.

\begin{prop}\label{Bertini}For a generic polynomial potential $W,$ we have an equality $$\Nilp_{\gamma}\cap \Crit(W_{\gamma})=\Crit(W_{\gamma})\cap W_{\gamma}^{-1}(0).$$\end{prop}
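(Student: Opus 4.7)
The inclusion $\Nilp_\gamma\cap\Crit(W_\gamma)\subseteq\Crit(W_\gamma)\cap W_\gamma^{-1}(0)$ is already noted in the text, and the content of the proposition is the reverse inclusion for generic $W$. The first step of the plan is to reduce to the semisimple locus: since $W_\gamma$ is $G_\gamma$-invariant and regular, it factors through the affine GIT quotient $M_\gamma/\!/G_\gamma$, so $W_\gamma(E)=W_\gamma(E^{ss})$; since subquotients of $J_{Q,W}$-modules are again $J_{Q,W}$-modules, $E\in\Crit(W_\gamma)$ forces $E^{ss}\in\Crit(W_\gamma)$; and $E$ is nilpotent iff $E^{ss}$ is nilpotent, the unique nilpotent semisimple of dimension $\gamma$ being $N_\gamma:=\bigoplus_i S_i^{\gamma^i}$ with $S_i$ the trivial simple at vertex $i$. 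Writing $E^{ss}=N\oplus T$ with $N$ a sum of trivial simples and $T$ a direct sum of non-nilpotent simples of some dimension $\gamma_T\leq\gamma$, every cyclic path of length $\geq 1$ acts nilpotently on $N$ and hence with zero trace, so $W_\gamma(N)=0$ and $W_\gamma(E)=W_{\gamma_T}(T)$ by additivity of trace; moreover $N$ is automatically critical, and $E^{ss}$ critical implies $T$ critical. This reduces the proposition to the following: for generic $W$ and every non-zero $\gamma_T\leq\gamma$, there is no non-nilpotent semisimple $T$ of dimension $\gamma_T$ satisfying both $T\in\Crit(W_{\gamma_T})$ and $W_{\gamma_T}(T)=0$.

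The plan is then to set up a Bertini-style incidence argument. Let $\mathcal{W}_N$ denote the affine space of potentials of degree $\leq N$, and $\mathcal{N}_{\gamma_T}\subseteq M_{\gamma_T}/\!/G_{\gamma_T}$ the constructible subset of iso-classes of non-nilpotent semisimples of dimension $\gamma_T$; its dimension $d(\gamma_T)$ is bounded independently of $N$. Form the incidence variety
\begin{equation*}
\mathcal{Z}_{\gamma_T}=\bigl\{(W,[T])\in\mathcal{W}_N\times\mathcal{N}_{\gamma_T}\ :\ (\partial_a W)(T)=0\ \forall a,\ W_{\gamma_T}(T)=0\bigr\}.
\end{equation*}
The fiber over each fixed $[T]$ is a linear subspace of $\mathcal{W}_N$ of some codimension $r(T)$. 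The crucial claim is that for $[T]$ generic in $\mathcal{N}_{\gamma_T}$ and $N$ large, one has $r(T)>d(\gamma_T)$; granting this, $\dim\mathcal{Z}_{\gamma_T}<\dim\mathcal{W}_N$, so the projection to $\mathcal{W}_N$ is not dominant and its image is a proper closed subset. Taking the union over the finitely many $\gamma_T\leq\gamma$, generic $W\in\mathcal{W}_N$ lies in the complement, yielding the proposition.

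The main obstacle is the strict rank inequality $r(T)>d(\gamma_T)$. The guiding picture is that $\mathcal{N}_{\gamma_T}$ is locally parametrized by spectral data of the non-nilpotent simple summands of $T$ (for example, for a single loop at vertex $i$, by the multiset of non-zero eigenvalues of the loop), and the criticality conditions $(\partial_a W)(T)=0$ force these spectral parameters to be roots of cyclic-derivative polynomials derived from $W$, supplying $d(\gamma_T)$ independent linear equations on $\mathcal{W}_N$ once $N$ is large enough; the extra equation $W_{\gamma_T}(T)=0$ then contributes one further, independent, linear constraint for a generic spectrum. Carrying this out rigorously — verifying the independence of the conditions modulo the $G_{\gamma_T}$-symmetries and choosing $N$ large enough uniformly in $\gamma_T\leq\gamma$ — is the technical heart of the proof.
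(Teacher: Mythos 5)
Your reduction to semisimple representations with no nilpotent simple summands is correct (criticality passes to subquotients, traces only see the semisimplification, and the nilpotent part contributes zero to $W_\gamma$), but the argument stops exactly where the difficulty of the proposition begins. The ``crucial claim'' $r(T)>d(\gamma_T)$ is not a deferrable verification: for a general quiver there is no established spectral parametrization of $\mathcal{N}_{\gamma_T}$ (the loop picture you invoke is special), and proving that the linear functionals $W\mapsto(\partial_aW)(T)$ and $W\mapsto W_{\gamma_T}(T)$ span a space of dimension exceeding $\dim\mathcal{N}_{\gamma_T}$ is essentially the whole content of the statement; note also that many of these functionals are forced to be dependent by $G_{\gamma_T}$-invariance, so the count is genuinely delicate. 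Moreover, even granting the inequality for \emph{generic} $[T]$, the inference $\dim\mathcal{Z}_{\gamma_T}<\dim\mathcal{W}_N$ does not follow: the fibre codimension $r(T)$ can drop on proper subvarieties of $\mathcal{N}_{\gamma_T}$ (for instance where simple summands collide), so one must stratify $\mathcal{N}_{\gamma_T}$ and prove $r(T)>\dim(\mathrm{stratum})$ on every stratum. As written, the proposal is a plausible plan rather than a proof.

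For comparison, the paper's proof avoids any moduli of semisimples and any independence count. One first shows that $\Nilp_\gamma\subset M_\gamma$ is the common zero locus of finitely many trace functions $\Tr(\sigma_1),\dots,\Tr(\sigma_l)$ of cyclic paths: if all traces of cycles vanish on $E$, then the (non-unital) algebra of cycles at each vertex consists of trace-zero operators, hence by an Engel-type argument is simultaneously strictly upper-triangularizable, so $E$ is nilpotent; Noetherianity of $M_\gamma$ gives finiteness. Then, for $N$ at least the maximal degree of the $\sigma_i$, the linear system spanned by $\Tr(\sigma)$ with $1\leq\deg(\sigma)\leq N$ has base locus exactly $\Nilp_\gamma$, and Bertini's theorem gives that a generic member $W_\gamma$ has $W_\gamma^{-1}(0)$ smooth away from the base locus, i.e. $\Crit(W_\gamma)\cap W_\gamma^{-1}(0)\subset\Nilp_\gamma$; the reverse inclusion is automatic. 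If you wish to salvage your incidence-variety setup, the quickest repair is to replace the unproved rank inequality by this observation that the trace functions themselves already cut out $\Nilp_\gamma$, which is what makes a Bertini-type conclusion available.
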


\begin{proof}We claim that we can find a collection of cyclic paths $\sigma_1,\dots,\sigma_l$ such that the subset $\Nilp_{\gamma}\subset M_{\gamma}$ is given by equations $\Tr(\sigma_1)=\dots=\Tr(\sigma_l)=0.$ Indeed, since $M_{\gamma}$ is Noetherian, it suffices to show that if $E\in M_{\gamma},$ and $\Tr_{E}(\sigma)=0$ for any cyclic path $\sigma,$ then $E$ is nilpotent.

We show this as follows. Take any vertex $i\in V(Q),$ and let $B_i\subset \End(E_i)$ be anon-unital subalgebra, which is the image of $(e_i(\C Q)e_i)\cap (\C Q)_+,$
 where $e_i\in\C Q$ is the idempotent at the vertex $i,$ and $(\C Q)_+\subset \C Q$ is the ideal generated by arrows. Then we have $\Tr_{E_i}(b)=0$ for any $b\in B_i.$ Hence, by Engel's Theorem, $B_i$ is contained in the subalgebra of strictly
upper triangular matrices w.r.t. some basis of $E_i.$ It follows by Dirichlet's principle that any path in $Q$ of length at least $\sum\limits_{i\in I}\gamma^i$ acts by zero on $E,$ therefore $E$ is nilpotent.

Hence, there is a finite collection of cycles $\sigma_1,\dots,\sigma_l$ with the required property. Now, put $$N_0=\max(\deg(\sigma_1),\dots,\deg(\sigma_l)).$$ The statement of the Proposition follows from Bertini's Theorem, applied to the linear systems
$$\PP(V_N),\quad V_N=\langle \Tr(\sigma),\,1\leq \deg(\sigma)\leq N\rangle\subset H^0(M_{\gamma},\cO),\quad N\geq N_0.$$\end{proof}

Now take some central charge on $Q,$ i.e. a homomorphism of abelian groups $$Z:\Z^{V(Q)}\to\C,$$ given by a map $$V(Q)\to \cH_+=\{\im z>0\}\subset\C.$$
Then, for any non-zero dimension vector $\gamma\in\Z_{\geq 0}^{V(Q)}\setminus\{0\}$ we have $Z(\gamma)\in\cH_+,$ thus we have $\Arg(Z(\gamma))\in (0,\pi).$
Now, for any non-zero finite-dimensional representation $E$ of $Q$ we define its {\it slope}
$$\phi(E):=\Arg(Z(E))\in (0,\pi),\quad Z(E):=Z(\un{\dim}\, E).$$

\begin{defi}A non-zero finite-dimensional representation $E$ of $Q$ is called stable (resp. semi-stable) w.r.t. $Z$ if for any of its proper non-zero subrepresentation $E'\subset E$ we have $\phi(E')<\phi(E)$ (resp. $\phi(E')\leq\phi(E)$).\end{defi}

The following is well-known:

\begin{prop}For any finite-dimensional representation $E$ of $Q$ we have a unique increasing filtration $0=E_0\subset E_1\subset\dots E_n=E,$ such that all subquotients $E_i/E_{i-1}$ are semi-stable, and
$$\phi(E_1)>\phi(E_2/E_1)>\dots>\phi(E_n/E_{n-1}).$$
This filtration is called Harder-Narasimhan filtration.\end{prop}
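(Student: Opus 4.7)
The strategy I would follow is the classical one: construct the HN filtration inductively by splitting off a maximal destabilizing subobject. The essential technical tool is the \emph{seesaw property}: for any short exact sequence $0\to A\to B\to C\to 0$ of non-zero finite-dimensional representations of $Q$, the identity $Z(B)=Z(A)+Z(C)$ inside $\cH_+$ forces $\phi(B)$ to lie (non-strictly) between $\phi(A)$ and $\phi(C)$, and any strict slope inequality between two of $A,B,C$ is equivalent to the corresponding strict inequality between the other two; in particular $\phi(A)<\phi(C)$ is equivalent to $\phi(A)<\phi(B)<\phi(C)$. This is elementary convexity of $\cH_+$.

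First I would construct a maximal destabilizing subobject $E_1\subseteq E$. Since the dimension vectors of subrepresentations of $E$ are bounded coordinatewise by that of $E$, only finitely many slopes arise, so the supremum $\phi_{\max}$ of slopes of non-zero subrepresentations is attained. Next I would verify that the collection $\mathcal{S}$ of non-zero subrepresentations $F\subseteq E$ with $\phi(F)=\phi_{\max}$ is closed under sums: for $F,F'\in\mathcal{S}$, the inclusion $F+F'\subseteq E$ gives $\phi(F+F')\leq\phi_{\max}$, while applying seesaw to $0\to F\cap F'\to F\oplus F'\to F+F'\to 0$ (noting $\phi(F\oplus F')=\phi_{\max}$ and $\phi(F\cap F')\leq\phi_{\max}$) forces $\phi(F+F')\geq\phi_{\max}$. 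Finiteness of dimension vectors in $\mathcal{S}$ then yields a (necessarily unique) largest element $E_1\in\mathcal{S}$.

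Three short checks complete existence. First, $E_1$ is semistable, since any non-zero $E'\subseteq E_1$ has $\phi(E')\leq\phi_{\max}=\phi(E_1)$ by the very definition of $\phi_{\max}$. Second, the quotient $E/E_1$ has strictly smaller maximal slope: if some non-zero $F/E_1\subseteq E/E_1$ had $\phi(F/E_1)\geq\phi(E_1)$, then seesaw on $0\to E_1\to F\to F/E_1\to 0$ would give $\phi(F)\geq\phi_{\max}$, either violating the definition of $\phi_{\max}$ (if strict) or contradicting the maximality of $E_1$ in $\mathcal{S}$ (if equality). Third, induction on $\dim_{\C}E$ applied to $E/E_1$ produces the remaining steps, whose preimages in $E$ assemble into the desired HN filtration.

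Uniqueness follows by the same token. Given any filtration $0=F_0\subset\dots\subset F_n=E$ satisfying the stated properties, iterating seesaw along the subquotients of strictly decreasing slope shows that every non-zero subrepresentation of $E$ has slope at most $\phi(F_1)$, with equality forcing containment in $F_1$; thus $F_1$ is itself a maximal destabilizing subobject and, by its uniqueness, $F_1=E_1$. Induction on the length of the filtration then concludes. I do not anticipate a serious obstacle: the entire argument is bookkeeping around the seesaw property, and the only step that really uses the structure of $\cH_+$ in a non-trivial way is the closure of $\mathcal{S}$ under sums, which ensures that a largest slope-maximizing subrepresentation is well-defined.
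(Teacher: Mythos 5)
Your argument is correct and complete; note that the paper itself offers no proof of this proposition, stating it as well-known, so there is nothing to compare against --- what you have written is the standard maximal-destabilizing-subobject argument, and it works verbatim here because only finitely many dimension vectors (hence slopes) occur among subrepresentations of $E$ and because $Z$ takes values in the open upper half-plane, which gives the seesaw property. The only point worth a half-sentence in a write-up is the degenerate case $F\cap F'=0$ in your closure-under-sums step, where the short exact sequence is not needed: there $F+F'=F\oplus F'$ and $Z(F+F')=Z(F)+Z(F')$ already has argument $\phi_{\max}$, so the conclusion is immediate.
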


\begin{defi}Take some sector $V\subset\cH_+,$ i.e. $V$ is closed under summation and multiplication by positive reals (for example $V$ may be just a ray).
Then the open (but possibly empty) $G_{\gamma}$-invariant subset $$M_{\gamma,V}\subset M_{\gamma}$$ consists of representations
$E$ such that for the Harder-Narasimhan filtration $E_{\bullet}$ on $E$ we have $Z(E_i/E_{i-1})\in V.$\end{defi}

To see that $M_{\gamma,V}\subset M_{\gamma}$ is indeed open, note that its complement consists of representations $E\in M_{\gamma},$
such that either there exists a non-zero subrepresentation $E'\subset E$ with $\Arg (Z(E'))>\Arg(V),$ or there exists a non-zero quotient representation $E''$
of $E$ such that $\Arg(Z(E''))<\Arg(V).$ Both of these conditions are clearly closed.

\subsection{Monodromic mixed Hodge structures}
\label{ss:MMHS}

We refer the reader to \cite{S}, \cite{PS} for the introduction to M. Saito's theory of mixed Hodge modules.

Recall the category $EMHS$ of exponential mixed Hodge structures, which was used by Kontsevich and Soibelman \cite{KS}.
It is defined as some full subcategory $EMHS\subset MHM_{\A^1}$ of the category of mixed Hodge modules on the affine line.
Namely,
$$EMHS=\{M\in MHM_{\A^1}\mid \bR\Gamma(\rat(M))=0\},$$
where $\rat:MHM_{\A^1}\to\Perv(\A^1,\Q)$ is the Betti realization functor.

The category $EMHS$ is closed under the Thom-Sebastiani product $\star_+$ on $D^b(MHM_{\A^1})$:
$$M\star_+ N=sum_*(M\boxtimes N),$$
where $sum:\A^1\times\A^1\to \A^1$ is the summation morphism.

Now, the inclusion functor $i:EMHS\to MHM_{\A^1}$ has left adjoint $p:MHM_{\A^1}\to EMHS,$ such that $p\circ i=\id,$ and the composition $i\circ p=:\Pi$ is given by the formula
$$\Pi(M)=M\star_+ (j_!\Q_{G_m}(0)[1]),$$
where $j:G_m\hookrightarrow\A^1$ is the embedding.

The category $EMHS$ carries the weight filtration: for $E\in EMHS,$ put
$$W_n^{EMHS} E=\Pi(W_{n+1}E),$$
where $W_{\bullet}E$ is the usual weight filtration on $MHM_{\A^1}.$ In particular, if $S$ is an admissible
variation of mixed Hodge structures on $G_m,$ then we have the object
$$E=j_! S[1]\in EMHS,\quad W_n^{EMHS}E=j_!(W_n S)[1].$$

Objects of $EMHS$ of this kind form the subcategory
$$MMHS=\{M\in EMHS\mid M\text{ is unramified over }G_m\subset\A^1\}\subset EMHS,$$
closed under Thom-Sebastiani product $\star_+.$

\begin{defi}\label{critical_cohomology}Let $X$ be a smooth algebraic variety, $f:X\to\C$ a regular function,
and $X^{sp}\subset X^0:=f^{-1}(0)$ a locally closed subset, and let $u$ be the coordinate on $G_m.$ Then the critical cohomology with compact support $H^{\bullet,crit}_c(X^{sp},f)$ is defined by the formula
\begin{multline*}H^{i,crit}_c(X^{sp},f)=\\H^{i+1}((G_m\to\A^1)_!(X^{sp}\times G_m\to G_m)_!(X^{sp}\times G_m\to X^0\times G_m)^*\phi_{\frac{f}{u}}\Q_{X\times G_m}(0))\in MMHS.\end{multline*}
\end{defi}

Here we take the vanishing cycles functor $\phi_{\frac{f}{u}}:D^b(MHM_{X\times G_m})\to D^b(MHM_{X^0\times G_m}),$
preserving the t-structure.

If the variety $X$ equipped with the action of some affine algebraic group $G\subset GL(N),$ and $f$ and $X^{sp}$ are $G$-invariant, then the equivariant critical cohomology $H^{\bullet,crit}_{c,G}(X^{sp},f)$ is defined in the standard way.


\subsection{Donaldson-Thomas series}
\label{ss:DTS}

Now, for any central charge $Z:\Z^{V(Q)}\to\C,$ the sector $V\subset\cH_+,$ and the dimension vector $\gamma\in\Z_{\geq 0}^{V(Q)},$
we have the closed $G_{\gamma}$-invariant subset $M_{\gamma,V}^{sp}:=\Nilp_{\gamma}\cap \Crit(W_{\gamma})\cap M_{\gamma,V}.$

We have the Euler form $\chi:\Z^I\times\Z^I\to\Z,$
$$\chi(\gamma_1,\gamma_2)=\sum\limits_{i\in I}\gamma_1^i\gamma_2^i-\sum\limits_{i,j\in I}a_{ji}\gamma_1^i\gamma_2^j.$$

Put $R=K_0(MMHS).$ This is a commutative ring via the Thom-Sebastiani product $\star_+.$ We have the decreasing filtration
$$F^pR=\sum\limits_{\substack{E\in MMHS,\\W_{p-1}^{EMHS}E=0}}\Z\cdot [E]\subset R.$$
Put $$\hat{R}:=\lim\limits_{\substack{\leftarrow\\p}}R/F^pR.$$

We have the object $T=j_!\Q_{G_m}(-1)[1]\in MMHS.$ We denote by the same letter its class in $\hat{R}.$

Further, let $H^{\bullet}$ be some graded object of $MMHS,$ satisfying the assumption that for each $p\in\Z,$
$W_p^{EMHS}H^{i}=0$ for all but finitely many $i.$ Then its class in $\hat{R}$ is defined by the formula
$$[H^{\bullet}]=\sum\limits_{i\in\Z}(-1)^i[H^i]\in\hat{R}.$$

The {\it motivic quantum torus} $\cT_Q^{mot}$ is the algebra over $\hat{R}[T^{\frac12}],$ with the $\hat{R}[T^{\frac12}]$-basis
$w_{\gamma},$ $\gamma\in\Z^I.$ The multiplication is given by the formula $$w_{\gamma_1}w_{\gamma_2}=T^{-\chi(\gamma_1,\gamma_2)}w_{\gamma_1+\gamma_2}.$$

We will use the modified monomials $$\hat{w}_{\gamma}:=T^{-\frac{\chi(\gamma,\gamma)}2}w_{\gamma}\in \cT_Q^{mot},\quad \gamma\in\Z^{V(Q)}.$$
They satisfy the relations:
\begin{equation}\label{motivic_mult}\hat{w}_{\gamma_1}\cdot\hat{w}_{\gamma_2}=T^{\frac{-\chi(\gamma_1,\gamma_2)+\chi(\gamma_2,\gamma_1)}2}\hat{w}_{\gamma_1+\gamma_2},
\quad \hat{w}_{\gamma_1}\cdot\hat{w}_{\gamma_2}=T^{-\chi(\gamma_1,\gamma_2)+\chi(\gamma_2,\gamma_1)}
\hat{w}_{\gamma_2}\cdot\hat{w}_{\gamma_1}.\end{equation}

For any strict convex cone $C\subset \R^I$ (not necessarily closed) one defines the completion $\hat{\cT}_{Q,C}^{mot}$
of $\cT_Q$ by the formula
$$\hat{\cT}_{Q,C}=\sum\limits_{\gamma\in\Z^I}\prod\limits_{\gamma'\in\Z^I\cap C}\hat{R}[T^{\frac12}]\cdot \hat{w}_{\gamma+\gamma'}\subset\prod\limits_{\gamma\in\Z^I}\hat{R}[T^{\frac12}]\cdot e_{\gamma}.$$
The product on $\hat{\cT}_{Q,C}$ comes from the formula \eqref{motivic_mult}.

Now, the sector $V\subset\cH_+$ defines the cone $$C_V=\{\gamma\in\R_{>0}^I\mid Z(\gamma)\in V\}.$$
One defines the critical DT series $A_V$ by the formula
\begin{equation}\label{formula_for_A_V}A_V=1+\sum\limits_{\gamma\in C_V}[D(H_{c,G_{\gamma}}^{\bullet,crit}(M_{\gamma,V}^{sp},W_{\gamma}))]\cdot
T^{-\frac{\chi(\gamma,\gamma)}2}\hat{w}_{\gamma}\in\hat{\cT}_{Q,C_V}^{mot},\end{equation}
where $D$ is the duality functor on $MMHS.$

Kontsevich and Soibelman \cite{KS} proved the following result:

\begin{theo}\label{factorization_polynomial}Suppose that the sector $V\subset\cH_+$ is the disjoint union of two sectors $V_1,V_2\subset\cH_+,$
in the clockwise order. Then we have
$$A_V=A_{V_1}A_{V_2}.$$\end{theo}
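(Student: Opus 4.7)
The plan is to expand both sides of $A_V = A_{V_1}A_{V_2}$ as formal series in the motivic quantum torus $\hat{\cT}_{Q,C_V}^{mot}$ and match the coefficient of $\hat w_\gamma$ for each $\gamma\in C_V$. The starting observation is that, since $V_1$ and $V_2$ partition $V$ in clockwise order, every $E\in M_{\gamma,V}^{sp}$ admits a canonical two-step filtration $0\subset E_1\subset E$ obtained by cutting its Harder--Narasimhan filtration at the $V_1/V_2$ interface: $E_1$ bundles together the HN factors whose slope lies in the counterclockwise sector $V_1$, and $E/E_1$ collects the factors lying in $V_2$. This yields a canonical splitting $\gamma = \gamma_1+\gamma_2$ with $E_1\in M_{\gamma_1,V_1}^{sp}$ and $E/E_1\in M_{\gamma_2,V_2}^{sp}$.

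Next I would introduce the Hall-style stack $\mathfrak F_{\gamma_1,\gamma_2}$ of short exact sequences $0\to E'\to E\to E''\to 0$ with $E'\in M_{\gamma_1,V_1}^{sp}$ and $E''\in M_{\gamma_2,V_2}^{sp}$, together with the standard diagram
\[
[M_{\gamma_1,V_1}^{sp}/G_{\gamma_1}]\times[M_{\gamma_2,V_2}^{sp}/G_{\gamma_2}]\xleftarrow{\ p\ }\mathfrak F_{\gamma_1,\gamma_2}\xrightarrow{\ q\ }[M_{\gamma,V}^{sp}/G_\gamma].
\]
The map $p$, which remembers only $(E',E'')$, is a stacky affine fibration of virtual rank $-\chi(\gamma_2,\gamma_1)$ --- the off-diagonal extension blocks contribute $\sum_{i,j}a_{ij}\gamma_2^j\gamma_1^i$, and the unipotent stabilizer removes $\sum_i \gamma_2^i\gamma_1^i$. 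The map $q$ is a locally closed immersion, and as $(\gamma_1,\gamma_2)$ ranges over the finitely many splittings of $\gamma$, the images of the $q$'s give a constructible stratification of $[M_{\gamma,V}^{sp}/G_\gamma]$ by HN-split type. Crucially, along this stratification the potentials agree: $q^*W_\gamma = p^*(W_{\gamma_1}\boxplus W_{\gamma_2})$ modulo terms that vanish on the critical locus, since the trace of any cyclic word can be computed either on $E$ or on the associated graded $E'\oplus E''$.

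The analytic heart is then the equivariant Thom--Sebastiani theorem for vanishing cycles in $MMHS$ applied to this diagram, combined with excision across the stratification and the standard behaviour of critical cohomology under affine-bundle pullback (which introduces a Tate twist). Tracking the twists through the duality $D$ (which inverts $T$ on $K_0$), one arrives at the identity
\begin{multline*}
[D(H^{\bullet,crit}_{c,G_\gamma}(M_{\gamma,V}^{sp},W_\gamma))] \\
= \sum_{\gamma_1+\gamma_2=\gamma} T^{\chi(\gamma_2,\gamma_1)}\,[D(H^{\bullet,crit}_{c,G_{\gamma_1}}(M_{\gamma_1,V_1}^{sp},W_{\gamma_1}))]\cdot[D(H^{\bullet,crit}_{c,G_{\gamma_2}}(M_{\gamma_2,V_2}^{sp},W_{\gamma_2}))]
\end{multline*}
in $\hat R$. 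Substituting this into \eqref{formula_for_A_V} and expanding $\hat w_{\gamma_1}\hat w_{\gamma_2}$ via \eqref{motivic_mult}, the extra twist $T^{\chi(\gamma_2,\gamma_1)}$ combines with the symmetric factor $T^{-\chi(\gamma,\gamma)/2}$ (unpacked through the bilinearity $\chi(\gamma,\gamma)=\chi(\gamma_1,\gamma_1)+\chi(\gamma_2,\gamma_2)+\chi(\gamma_1,\gamma_2)+\chi(\gamma_2,\gamma_1)$) to reproduce precisely the noncommutative shift $T^{(-\chi(\gamma_1,\gamma_2)+\chi(\gamma_2,\gamma_1))/2}$ appearing in the rule for $\hat w_{\gamma_1}\hat w_{\gamma_2}$, which gives the coefficient of $\hat w_\gamma$ in $A_{V_1}A_{V_2}$.

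The main obstacle will be the clean formulation and invocation of the equivariant Thom--Sebastiani theorem in the monodromic setting of \cite{KS}: one must verify compatibility of the $\star_+$-product with pullback along the (stacky) affine bundle $p$, with pushforward along the locally closed immersion $q$, and with the $G_\gamma$-equivariance on the critical locus. A secondary bookkeeping point is that the virtual rank $-\chi(\gamma_2,\gamma_1)$ is always an integer, so the attendant Tate twists remain in $\Z[T^{\pm1}]$ and the half-integer powers arising from $T^{1/2}$ on the two sides of \eqref{motivic_mult} cancel consistently --- a mild but necessary parity check.
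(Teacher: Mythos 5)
You should first know that the paper does not prove this statement at all: Theorem \ref{factorization_polynomial} is quoted from Kontsevich--Soibelman \cite{KS}, and the paper's own analogous statement for formal potentials (Proposition \ref{factorization} in the Appendix) is deduced by invoking the Integral Identity of \cite{KS}, Subsection 7.8. Measured against the KS argument, the overall shape of your plan is right: the HN stratification of $M_{\gamma,V}^{sp}$ by the $V_1/V_2$ cut, additivity of compactly supported classes over a locally closed stratification, and the quantum-torus bookkeeping (your stratum twist $T^{\chi(\gamma_2,\gamma_1)}$ is exactly what is needed to match $\hat w_{\gamma_1}\hat w_{\gamma_2}$ against $\hat w_{\gamma}$) are all correct.

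The genuine gap is at what you yourself call the analytic heart. Excision plus the ordinary (equivariant) Thom--Sebastiani theorem plus affine-bundle invariance do not suffice to compute the contribution of an HN stratum. The critical cohomology is defined via the vanishing-cycle complex of $W_\gamma$ on the \emph{ambient} smooth space $M_{\gamma,V}$, restricted to the locally closed stratum, and vanishing cycles do not commute with restriction to a locally closed subvariety. It is true (exactly, not just modulo the critical locus) that $W_\gamma$ restricted to the block-upper-triangular locus equals $p^*(W_{\gamma_1}\boxplus W_{\gamma_2})$, since traces of cyclic words only see the diagonal blocks; but in a neighborhood of the stratum $W_\gamma$ also depends on the transverse ``co-extension'' coordinates, with cross terms coupling them to the extension coordinates (these cross terms are invariant under the $\C^*$-action with weights $+1$ and $-1$ on extensions and co-extensions, so they are generically present). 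Hence the ambient function near the stratum is \emph{not} a direct sum of a function on the stratum and a function of the transverse variables, and Thom--Sebastiani is not applicable. Controlling these transverse directions --- and showing that the outcome is exactly the Tate twist predicted by your naive dimension count --- is precisely the content of the Kontsevich--Soibelman integral identity (conjectured in \cite{KS08}, proved in the monodromic mixed Hodge setting in \cite{KS}, Subsection 7.8). This is the hard core of the theorem and it is absent from your plan; without it (or an equivalent substitute, such as a dimensional-reduction statement for the $\pm1$-weighted $\C^*$-action), the stratum-wise identity you assert remains unproved.
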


\subsection{Stable framed representations}
\label{ss:SFR}

Fix some $0<\phi<\pi,$ and define the sector
$$V_{\leq \phi}:=\{z\in\cH_+\mid \Arg(z)\leq \phi\}\subset\cH_+,$$
and analogously $V_{<\phi},V_{\geq \phi},V_{>\phi}\subset\cH_+.$

Take any vector $\lambda\in\Z_{\geq 0}^{V(Q)}\setminus\{0\}.$ Take the projective $\C Q$-module
$P_{\lambda}=\bigoplus\limits_{i\in I}P_i^{\oplus\lambda^i},$ where $P_i$ is indecomposable projective module in the $i$-th vertex. Define smooth variety
$$M_{\gamma,\leq \phi,\lambda}^{sfr}=\{(E\in M_{\gamma,V_{\leq \phi}},\,u:P_{\lambda}\to E)\mid \coker(u)\in M_{\gamma',V_{>\phi}}\text{ for some }\gamma'\},$$
and analogously $M_{\gamma,<\phi,\lambda}^{sfr},$ both equipped with $G_{\gamma}$-action. Clearly, we have $G_{\gamma}$-equivariant closed subsets
$$M_{\gamma,\leq \phi,\lambda}^{sp,sfr}\subset M_{\gamma,\leq \phi,\lambda}^{sfr},\quad M_{\gamma,< \phi,\lambda}^{sp,sfr}\subset M_{\gamma,< \phi,\lambda}^{sfr}$$

Take the lattice $L_{\lambda}=\Z^{V(Q)}\times\Z,$ and extend the form $\chi$ onto $L_{\lambda}$ by the formula
$$\chi((\gamma,0),(0,1))=0,\,\, \chi((0,1),(\gamma,0))=-\sum\limits_{i\in I}\lambda^i\gamma^i,\,\,\chi((0,1),(0,1))=1.$$
We have the corresponding motivic quantum torus $\cT_{L_{\lambda}}^{mot}$ and its completion $\hat{\cT}_{L_{\lambda},C_{V_{\leq \phi}\times \R_{>0}}}^{mot}$ (resp. $\hat{\cT}_{L_{\lambda},C_{V_{<\phi}\times \R_{>0}}}^{mot}$).

\begin{prop}\label{conj_and_sfr}1) The action of $G_{\gamma}$ on $M_{\gamma,\leq\phi,\lambda}^{sfr}$
(resp. $M_{\gamma,<\phi,\lambda}^{sfr}$) is free, so that we have an algebraic variety
$$\cM_{\gamma,\leq\phi,\lambda}^{sfr}:=M_{\gamma,\leq\phi,\lambda}^{sfr}/G_{\gamma}\text{ (resp. } \cM_{\gamma,<\phi,\lambda}^{sfr}=M_{\gamma,<\phi,\lambda}^{sfr}/G_{\gamma}).$$
Put $$A_{\leq\phi,\lambda}^{sfr}=\sum\limits_{\gamma\in\Z_{\geq 0}^{V(Q)}}
[D(H^{\bullet,crit}(\cM_{\gamma,\leq\phi,\lambda}^{sp,sfr},W_{\gamma}))]\cdot T^{-\frac{\chi(\gamma,\gamma)}2}\hat{w}_{\gamma}\in\hat{\cT}_{Q,C_{V_{\leq\phi}}}^{mot},$$
and analogously for $A_{<\phi,\lambda}^{sfr}.$

2) We have the following identity in $\hat{\cT}_{L_{\lambda},C_{V_{\leq \phi}\times \R_{>0}}}^{mot}$:
\begin{equation}\label{conjugation}A_{V_{\leq\phi}}\hat{w}_{(0,1)}A_{V_{\leq\phi}}^{-1}=
\hat{w}_{(0,1)}\cdot A_{\leq\phi,\lambda}^{sfr},\end{equation}
and analogously for $A_{<\phi,\lambda}^{sfr}.$
\end{prop}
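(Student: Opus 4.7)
The plan is to prove the two parts separately. For part (1), I will argue via slope incomparability: given $g \in G_\gamma$ with $g \cdot (E, u) = (E, u)$, the endomorphism $g - \id \in \End_{\C Q}(E)$ annihilates $\im(u)$, and so factors as a map $E \to \coker(u) \to E$. By hypothesis every Harder--Narasimhan slope of $\coker(u)$ lies in $V_{>\phi}$ while every HN slope of $E$ lies in $V_{\leq\phi}$, so the standard slope comparison for semistable quiver representations forces this composition to vanish; hence $g = \id$. The same argument works verbatim in the strict variant. Freeness of the $G_\gamma$-action then gives that the quotients $\cM^{sfr}_{\gamma, \leq\phi, \lambda}$ and $\cM^{sfr}_{\gamma, <\phi, \lambda}$ are smooth algebraic varieties, and that the quotient maps from $M^{sfr}_{\gamma, \leq\phi, \lambda}$ and $M^{sfr}_{\gamma, <\phi, \lambda}$ are principal $G_\gamma$-bundles.

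For part (2), I will use the framed-quiver trick that goes back to work of Engel--Reineke, Mozgovoy, and Nagao. Form the extended quiver $\tilde Q = Q \sqcup \{\infty\}$ by adjoining one new vertex with $\lambda^i$ new arrows $\infty \to i$ for each $i \in V(Q)$; since the new vertex lies on no cycle, the potential $W$ extends trivially to $\tilde Q$. A $\tilde Q$-representation of dimension vector $(\gamma, 1)$ is the same datum as a pair $(E, u)$ with $\dim E = \gamma$ and $u : P_\lambda \to E$, and the extended Euler form agrees with the pairing used in the definition of $\cT_{L_\lambda}^{mot}$. I will extend the central charge to $\tilde Z$ by picking a small generic $z_\infty \in \cH_+$ with $\Arg(z_\infty) = \phi + \varepsilon$.

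Next, I will apply Theorem \ref{factorization_polynomial} to the extended sector $\tilde V = V_{\leq\phi} \sqcup \R_{>0} z_\infty$. The crucial classification is: a $\tilde Q$-subrepresentation of $(E, u)$ is either of the form $(F', 0)$ with $F' \subset E$ a $Q$-subrepresentation, or $(F, 1)$ with $\im(u) \subset F \subset E$. Using this, $\tilde Z$-semistability of $(E, u)$ at dimension $(\gamma, 1)$ with slopes in $\tilde V$ is equivalent to $E \in M_{\gamma, \leq\phi}$ together with $\coker(u)$ having HN slopes in $V_{>\phi}$, i.e., precisely the stable-framed condition of Subsection \ref{ss:SFR}. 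In the clockwise factorization of $\tilde A_{\tilde V}$, the part of framing dimension exactly $1$ reads, modulo the ideal generated by framing dimension $\geq 2$, as $\hat w_{(0,1)} \cdot A^{sfr}_{\leq\phi, \lambda} \cdot A_{V_{\leq\phi}}$, where the leftmost factor is the contribution of the simple at $\infty$ (slope $\phi + \varepsilon$), the middle factor collects the framing-$1$ stable objects of $\tilde Q$, and the rightmost factor collects framing-$0$ objects. Perturbing $z_\infty$ across the ray of argument $\phi$ and applying the factorization in the opposite order gives the expression $A_{V_{\leq\phi}} \cdot \hat w_{(0,1)}$ for the same framing-$1$ truncation; equating the two and using the commutation relations \eqref{motivic_mult} yields \eqref{conjugation}.

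The main obstacle will be lifting this motivic identity to the critical-cohomology level: one must check that the restriction of the vanishing-cycle mixed Hodge module on the $\tilde Q$-representation stack to the slice of framing dimension $1$ computes $H^{\bullet, crit}(\cM^{sp, sfr}_{\gamma, \leq\phi, \lambda}, W_\gamma)$ as a monodromic mixed Hodge structure, and not merely its numerical shadow. This will follow from the observation that $W$ involves no new arrow, so the extended critical locus factors as the product of the $Q$-critical locus with the affine framing space; a Thom--Sebastiani argument then identifies the MMHS contributions on both sides of the factorization inside $\hat\cT^{mot}_{L_\lambda, C_{V_{\leq\phi} \times \R_{>0}}}$.
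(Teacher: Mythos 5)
Part 1) of your proposal is correct and is essentially the paper's own argument: $g-\id$ vanishes on $\im(u)$, hence factors through $\coker(u)$, and $\Hom_{\C Q}(\coker(u),E)=0$ because all HN slopes of $\coker(u)$ exceed $\phi$ while those of $E$ do not. Your part 2) also follows the paper's overall strategy (adjoin a framing vertex, put its charge at angle $\phi+\varepsilon$, apply Theorem \ref{factorization_polynomial} for two positions of the framing charge, and extract the framing-dimension-one part), but the central identification step is wrong as stated. First, $\tilde V=V_{\leq\phi}\sqcup\R_{>0}z_\infty$ is not a sector: the sum of a charge of argument $\leq\phi$ with $z_\infty$ has argument strictly between $\phi$ and $\phi+\varepsilon$, so Theorem \ref{factorization_polynomial} cannot be applied to $\tilde V$. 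More seriously, "all HN charges in $\tilde V$" is not equivalent to the stable-framed condition. Take $Q$ a single vertex with no arrows, $\lambda=1$, $Z(e_1)=\sqrt{-1}$, $\phi=\pi/2$. The pair $(E=\C,\,u=0)$ is not stable framed (its cokernel has slope $\phi$, not $>\phi$), yet its HN factors have charges $z_\infty$ and $Z(e_1)$, both in $\tilde V$; conversely the stable framed pair $(E=\C,\,u\neq 0)$ is $\tilde Z$-stable of slope strictly between $\phi$ and $\phi+\varepsilon$, hence not in $\tilde V$ — and this is independent of $|z_\infty|$. The correct statement, which is what the paper proves, is that the semistable locus for the \emph{band} sector $\{\phi<\Arg z\leq\phi+2\epsilon\}$ coincides with $M^{sfr}_{\gamma,\leq\phi,\lambda}$, but only after choosing, for each fixed $\gamma$, $\epsilon$ so small that no $\gamma'\leq\gamma$ has $\phi<\Arg Z(\gamma')\leq\phi+2\epsilon$, and the framing charge so large ($|z_\infty|=t\gg 0$) that $z_\infty+Z(\gamma'')$ stays in the band for all $\gamma''\leq\gamma$; the paper then takes $\lim_{\epsilon\to 0}\lim_{t\to+\infty}$. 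Your "small generic $z_\infty$" is exactly the wrong regime: the framing charge must dominate.

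Two further points. Your comparison factorization "$A_{V_{\leq\phi}}\cdot\hat w_{(0,1)}$", obtained by moving $z_\infty$ just below $\phi$, is also not justified: for that stability the framing-one semistables form a different framed moduli, not a trivial one. The paper instead moves the framing charge clockwise of \emph{all} the $\Arg Z(e_i)$, so that the framing factor splits off as the pure $q$-exponential $\sum_n T^{n^2/2}\hat w_{(0,n)}/((1-T)\cdots(1-T^n))$ (formula \eqref{A_H_+}); note also that the framing-vertex contribution is this stacky series, not the bare $\hat w_{(0,1)}$, and these $B\mathbb{G}_m$-type factors must be carried through \eqref{A_H_+} and \eqref{middle_sector} and only cancel when one compares coefficients of $\hat w_{(\gamma,1)}$ at the end, which is how the unadorned $\hat w_{(0,1)}$ in \eqref{conjugation} arises. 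Finally, your concern about lifting a "numerical shadow" to MMHS is moot: the DT series and Theorem \ref{factorization_polynomial} are already statements about classes in $\hat R$, so once the moduli identification is established equivariantly (as in the paper), no separate Thom--Sebastiani lifting argument is required.
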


\begin{proof}1) is standard. We present its proof for completeness, for the case of $M_{\gamma,\leq\phi,\lambda}^{sfr}.$
The case $<\phi$ is analogous.

  Suppose that $g\in G_{\gamma},$ $g\ne 1,$ and $g\cdot(E,u)=(E,u)$ for some $(E,u)\in M_{\gamma,\leq\phi,\lambda}^{sfr}.$ Then we have a non-trivial automorphism $g:E\to E$ in $\rmMod \C Q,$ such that $g\circ u=u.$ Then $(g-\id)_{\mid \im(u)}=0,$ hence $(g-\id)$ defines a morphism $\coker(u)\to E.$ But $E\in M_{\gamma,V_{\leq\phi}},$ and $\coker(u)\in M_{\gamma',V_{>\phi}},$ hence $\Hom_{\C Q}(\coker(u),E)=0.$ Thus, $g=\id.$

2) We consider only the case $\leq\phi.$ The case $<\phi$ is analogous.

Consider the quiver $Q_{\lambda},$ such that $V(Q_{\lambda})=V(Q)\sqcup \{v\},$
the arrows between the vertices from $V(Q)$ are the same as in $Q,$ there are exactly $\lambda^i$ arrows from $i$ to $v,$ and no other arrows. Clearly, we have the natural identification
$$M_{(\gamma,1)}\cong\{(E\in M_{\gamma},u\in\Hom_{\C Q}(P_{\lambda},E))\}.$$

We define two kinds of extensions of the central charge $Z:\Z^{V(Q)}\to\C$ to $\Z^{V(Q)}\times\Z.$ First, choose $0<\alpha<\min_{i\in V(Q)}\Arg(Z(e_i)),$ where $\{e_i,i\in V(Q)\}$ is the standard basis. Put
$$\tilde{Z}_{1}((\gamma,0))=Z(\gamma),\quad \tilde{Z}_1((0,1))=\exp(\alpha \sqrt{-1}).$$
If $\widetilde{A}_V^{1}$ are DT series for $Q_{\lambda}$ with central charge $\tilde{Z}_1,$
then
$$\widetilde{A}_{V_{>\alpha}}^{1}=A_{\cH_+},\quad \widetilde{A}^1_{V_{\leq \alpha}}=
\sum\limits_{n\geq 0}\frac{T^{\frac{n^2}2}}{(1-T)\dots(1-T^n)}\hat{w}_{(0,n)}.$$
In particular, we have \begin{equation}\label{A_H_+}\widetilde{A}_{\cH_+}=A_{\cH_+}\cdot \sum\limits_{n\geq 0}\frac{T^{\frac{n^2}2}}{(1-T)\dots(1-T^n)}\hat{w}_{(0,n)}=A_{V_{>\phi}}A_{V_{\leq\phi}}\cdot\sum\limits_{n\geq 0}\frac{T^{\frac{n^2}2}}{(1-T)\dots(1-T^n)}\hat{w}_{(0,n)}.\end{equation}

Another choice of the central charge is the following. Choose some $0<\epsilon<\frac{\pi-\phi}2,$ and $t>0.$ Put
$$\tilde{Z}_{\epsilon,t}((\gamma,0))=Z(\gamma),\quad \tilde{Z}_{\epsilon,t}((0,1))=t\exp((\phi+\epsilon)\sqrt{-1}).$$
Again, we denote by $\widetilde{A}_V^{\epsilon,t}$ the DT series for $Q_{\lambda}$ with central charge $\tilde{Z}_{\epsilon,t}.$ First, we have
\begin{equation}\label{A_V_-}\widetilde{A}^{\epsilon,t}_{V_{>(\phi+2\epsilon)}}=A_{V_{>(\phi+2\epsilon)}}.\end{equation}

Second, for a fixed $\epsilon$ we have
\begin{equation}\label{A_V_+}\lim\limits_{t\to+\infty}\widetilde{A}^{\epsilon,t}_{V_{\leq\phi}}=A_{V_{\leq\phi}}.\end{equation} Finally, we claim that
\begin{equation}\label{middle_sector}\lim\limits_{\epsilon\to 0}\lim\limits_{t\to+\infty}\widetilde{A}_{\phi<\Arg(z)\leq \phi+2\epsilon}=
\sum\limits_{n\geq 0}\frac{T^{\frac{n^2}2}}{(1-T)\dots(1-T^n)}\hat{w}_{(0,n)}\cdot A^{sfr}_{\leq\phi,n\lambda}.\end{equation}

Indeed, take any $\gamma\in\Z_{\geq 0}^I,$ and $n\geq 0.$ Choose sufficiently small $\epsilon>0$ in such a way that for any non-zero dimension vector
$\gamma'\leq\gamma$ with $\Arg(Z(\gamma'))>\phi$ one has $\Arg(Z(\gamma'))>\phi+2\epsilon.$ Then, choose sufficiently large $t>0$ in such a way
that for any dimension vector $\gamma''\leq\gamma$ we have $$\phi<\Arg(t\exp((\phi+\epsilon)i)+Z(\gamma''))\leq \phi+2\epsilon.$$
Then, it is easy to see that for the central charge $\tilde{Z}_{\epsilon,t}$ we have the $G_{\gamma}$-equivariant identification
$$M_{(\gamma,n),\phi<\Arg(z)\leq \phi+2\epsilon}\cong M_{\gamma,\leq\phi,n\lambda}^{sfr}.$$
This implies the formula \eqref{middle_sector}.

Compairing \eqref{A_H_+} with \eqref{A_V_-}, \eqref{A_V_+} and \eqref{middle_sector}, and applying Theorem \ref{factorization_polynomial} we get the following chain of equalities:
\begin{multline*}A_{V_{>\phi}}A_{V_{\leq\phi}}\cdot\sum\limits_{n\geq 0}\frac{T^{\frac{n^2}2}}{(1-T)\dots(1-T^n)}\hat{w}_{(0,n)}=\tilde{A}_{\cH_+}^1=
\lim\limits_{\epsilon\to 0}\lim\limits_{t\to+\infty}\tilde{A}_{\cH_+}^{\epsilon,t}=\\
(\lim\limits_{\epsilon\to 0}\lim\limits_{t\to+\infty}\tilde{A}^{\epsilon,t}_{V_{>\phi+2\epsilon}})\cdot
(\lim\limits_{\epsilon\to 0}\lim\limits_{t\to+\infty}\tilde{A}^{\epsilon,t}_{\phi<\Arg(z)\leq \phi+2\epsilon})\cdot (\lim\limits_{\epsilon\to 0}\lim\limits_{t\to+\infty}\tilde{A}^{\epsilon,t}_{V_{\leq\phi}})=\\
A_{V_{>\phi}}\cdot(\sum\limits_{n\geq 0}\frac{T^{\frac{n^2}2}}{(1-T)\dots(1-T^n)}\hat{w}_{(0,n)}\cdot A^{sfr}_{\leq\phi,n\lambda})\cdot A_{V_{\leq\phi}}.\end{multline*}

Now, multiplying on the left by $A_{V_{>\phi}}^{-1},$ on the right by $A_{V_{\leq\phi}}^{-1},$ and compairing the coefficients for $\hat{w}_{(\gamma,1)},$ we obtain the desired formula \eqref{conjugation}.\end{proof}

\section{Categorification of quantum cluster algebras}
\label{s:CQCA}

In this section we mostly recall various notions and results related to categorification of (quantum) cluster algebras.
We also explain in Subsections \ref{ss:3CY} and \ref{ss:DTQFP} how the Donaldson-Thomas theory can be extended to some quivers with formal potentials.

\subsection{Ginzburg DG algebras}
\label{ss:GDGA}

The notion of Ginzburg DG algebra for a quiver with formal potential is due to Ginzburg, see \cite{G}.

Let $Q$ be any quiver and $W$ a formal potential on $Q,$ i.e. an infinite linear combination of cyclic paths of positive length.
The (complete) Ginzburg DG algebra $\hat{\Gamma}_{Q,W}$ is defined as follows. Define the graded quiver $\hat{Q}$ as follows. The vertex set of $\hat{Q}$ is the same as of $Q.$ Further, the edges of $\hat{Q}$ are:

1) The edges of $Q$ of degree zero;

2) The edges $a^*:j\to i$ for any arrow $a:i\to j$ in $Q,$ $\deg(a^*)=-1;$

3) For each vertex $i,$ the loop $t_i$ at $i,$ $\deg(t_i)=-2.$

As a graded algebra $\hat{\Gamma}_{Q,W}$ is the complete path algebra of $\hat{Q}$ (w.r.t. the length of paths). Further, the differential $d$ on $\hat{\Gamma}_{Q,W}$ is continuous, and on the arrows we have
$$d(a)=0,\, d(a^*)=\partial_aW\text{ for all edges }a\in E(Q),\quad d(t_i)=e_i(\sum\limits_{a}[a,a^*])e_i.$$

We have the derived category $D(\hat{\Gamma}_{Q,W}),$ and the subcategories
$$D^b(\hat{\Gamma}_{Q,W})\subset\Perf(\hat{\Gamma}_{Q,W})\subset D(\hat{\Gamma}_{Q,W}),$$
where $\Perf(\hat{\Gamma}_{Q,W})$ denotes the subcategory of perfect DG modules, and $D^b$ stands for DG modules with finite-dimensional total cohomology. Denote by $\iota$ the natural embedding
$$\iota:D^b(\hat{\Gamma}_{Q,W})\hookrightarrow \Perf(\hat{\Gamma}_{Q,W}),$$
see \cite{KY}.

Since the DG algebra $\hat{\Gamma}_{Q,W}$ is concentrated in non-positive degrees, we have the natural t-structures on $D(\hat{\Gamma}_{Q,W})$ (resp. $D^b(\hat{\Gamma}_{Q,W})$) with the heart being the category of modules (resp. finite-dimensional modules) over $H^0(\hat{\Gamma}_{Q,W}).$ Clearly, we have
$$H^0(\hat{\Gamma}_{Q,W})\cong \hat{J}_{Q,W}:=\widehat{\C Q}/\widehat{\langle\partial_aW,a\in E(Q)\rangle}.$$

We have isomorphisms $$K_0(\Perf(\hat{\Gamma}_{Q,W}))\cong\Z^{V(Q)},\quad K_0(D^b(\hat{\Gamma}_{Q,W}))\cong\Z^{V(Q)},$$
where the basis for $K_0(\Perf(\hat{\Gamma}_{Q,W}))$ is given by classes $[\hat{\Gamma}_{Q,W,i}]=[e_i\hat{\Gamma}_{Q,W}],$ $i\in V(Q),$ and the basis for $K_0(D^b(\hat{\Gamma}_{Q,W}))$ is given by the classes $[S_i],$ $i\in V(Q),$ where $S_i$
is the simple module at the vertex $i.$ Moreover, the Euler pairing
$$\chi:K_0(\Perf(\hat{\Gamma}_{Q,W}))\times K_0(D^b(\hat{\Gamma}_{Q,W}))\to\Z,\quad \chi([E],[F])=\sum\limits_{n\in\Z}\dim\Hom^n(E,F),$$
is perfect since
$$\chi([\hat{\Gamma}_{Q,W,i}],[S_j])=\delta_{ij}.$$

Also, the following is well-known, see \cite{G} or \cite{KY}.

\begin{prop}\label{Koszul_res}1) For $E,F\in D^b(\hat{\Gamma}_{Q,W})$ We have
$$\chi([\iota(E)],[F])=\chi_Q([E],[F])-\chi_Q([F],[E]).$$

2) We have the Koszul resolution
$$\{0\to \hat{\Gamma}_{Q,W,k}\to\bigoplus\limits_{\beta:k\to j}\hat{\Gamma}_{Q,W,j}\to \bigoplus\limits_{\alpha:i\to k}\hat{\Gamma}_{Q,W,i}\to \hat{\Gamma}_{Q,W,k}\to 0\}\cong \iota(S_k),$$
hence $$[\iota(S_k)]=\sum\limits_{i\in V(Q)}(a_{ki}-a_{ik})[\hat{\Gamma}_{Q,W,i}].$$
\end{prop}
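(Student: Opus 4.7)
The plan is to establish part 2) first by writing down the resolution explicitly and then reduce part 1) to a computation on simple modules via bilinearity. For part 2), the four-term complex is built directly from the graded quiver $\hat{Q}$: its differentials are right multiplication by $t_k$ (from degree $-3$ to $-2$), by the dual arrows $\beta^*$ attached to arrows $\beta: k \to j$ (from $-2$ to $-1$), and by the arrows $\alpha: i \to k$ (from $-1$ to $0$), while the augmentation $e_k \mapsto 1$ realizes the quasi-isomorphism to $S_k$. The Ginzburg relations $d(a^*) = \partial_a W$ and $d(t_i) = e_i(\sum_a [a,a^*])e_i$ are engineered precisely so that these horizontal maps, together with the internal differential of $\hat{\Gamma}_{Q,W}$, fit into a total complex of DG modules, and the resulting complex is the standard Koszul resolution of a Ginzburg DG algebra as in \cite{G} and \cite{KY}. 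Acyclicity in positive cohomological degrees is checked by filtering by path length: on the associated graded, which is the Koszul complex for the free completed path algebra without potential, the contributions from $W$ drop out and the complex is classically exact. Taking the alternating sum of the terms in $K_0(\Perf(\hat{\Gamma}_{Q,W}))$ then yields
\[
[\iota(S_k)] = [\hat{\Gamma}_{Q,W,k}] - \sum_{\alpha: i \to k}[\hat{\Gamma}_{Q,W,i}] + \sum_{\beta: k \to j}[\hat{\Gamma}_{Q,W,j}] - [\hat{\Gamma}_{Q,W,k}] = \sum_{i \in V(Q)} (a_{ki} - a_{ik})[\hat{\Gamma}_{Q,W,i}],
\]
which is the asserted formula.

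For part 1), both sides are $\Z$-bilinear in $[E], [F] \in K_0(D^b(\hat{\Gamma}_{Q,W}))$, and since the classes $[S_k]$, $k \in V(Q)$, form a $\Z$-basis of this group, it suffices to verify the identity for $E = S_k$, $F = S_\ell$. Using the Koszul resolution of part 2) to replace $\iota(S_k)$ by its four-term perfect representative, together with $\dim\Hom^n(\hat{\Gamma}_{Q,W,i}, S_\ell) = \delta_{n,0}\delta_{i\ell}$, I compute
\[
\chi([\iota(S_k)], [S_\ell]) = \delta_{k\ell} - a_{\ell k} + a_{k\ell} - \delta_{k\ell} = a_{k\ell} - a_{\ell k}.
\]
On the other hand, directly from the definition of the Euler form,
\[
\chi_Q([S_k],[S_\ell]) - \chi_Q([S_\ell],[S_k]) = (\delta_{k\ell} - a_{\ell k}) - (\delta_{\ell k} - a_{k\ell}) = a_{k\ell} - a_{\ell k},
\]
matching the left-hand side.

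The main obstacle is the rigorous verification of quasi-isomorphism in part 2) in the presence of a possibly formal potential $W$: the Koszul differentials acquire $W$-dependent corrections through $d(a^*) = \partial_a W$, and one must argue that completeness of $\hat{\Gamma}_{Q,W}$ together with the path-length filtration controls convergence and preserves acyclicity. The cleanest route is to work in the pseudo-compact setting of \cite{KY} and invoke their version of the resolution directly, rather than reprove it from scratch. Since the paper explicitly quotes this result as well-known, the bulk of the proof reduces to the Euler-characteristic bookkeeping above, and no new difficulty arises from the presence of $W$ at the level of $K_0$.
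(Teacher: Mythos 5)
Your proposal is correct and is essentially the paper's treatment: the paper offers no proof of this proposition, simply quoting the resolution as well known from \cite{G} and \cite{KY}, and your $K_0$ bookkeeping for 2) together with the reduction of 1) to $E=S_k$, $F=S_\ell$ by bilinearity (using $\chi([\hat{\Gamma}_{Q,W,i}],[S_j])=\delta_{ij}$ and the definition of $\chi_Q$) is a correct completion of what the paper leaves implicit. One caveat on your explicit description of the differentials: morphisms between the right modules $e_i\hat{\Gamma}_{Q,W}$ are given by \emph{left} multiplication, and in the displayed unshifted four-term presentation the adjacent maps are multiplication by the arrows $\alpha:i\to k$, by the second cyclic derivatives $\partial^2_{\alpha\beta}W$, and by the arrows $\beta:k\to j$, with $\beta^*$ and $t_k$ appearing only as the degree-shifting correction components of the total differential (the composites are only null-homotopic, via $d(\beta^*)=\partial_\beta W$); since you ultimately invoke the Keller--Yang resolution and the class in $K_0(\Perf(\hat{\Gamma}_{Q,W}))$ depends only on the underlying graded terms, this imprecision does not affect the argument.
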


\subsection{Mutations of quivers with formal potentials}
\label{ss:MQFP}

The notion of mutation for a quiver with formal potential is due to Derksen, Weyman and Zelevinsky \cite{DWZ}.

Let $Q$ be a finite quiver, and $W$ a formal potential on $Q.$ Take some vertex $k\in V(Q),$ and assume that there are no loops at $k.$ We recall the mutation $\mu_k(Q,W).$

First, the pre-mutation $\mu_k^{pre}(Q,W)$ is defined as follows. The set of vertices of $\mu_k^{pre}(Q)$ is the same as that
of $Q.$ Further, each edge $a:i\to k$ in $Q$ is replaced by the edge $\bar{a}:k\to i,$ and similarly for each edge $b:k\to j.$
Further, for each pair of edges $a:i\to k,$ $b:k\to j,$ we add a new edge $[ba]:i\to j.$

The potential $\mu_k^{pre}W$ on $\mu_k^{pre}Q$ is defined as the sum $W_1+W_2,$ where
$$W_1=\sum\limits_{\substack{a:i\to k,\\ b:k\to j}}[ba]\bar{a}\bar{b},$$
and $W_2$ is obtained from $W$ by replacing each occurrence of $ba$ (in the cyclic paths of $Q$) by $[ba],$
$b:k\to j,$ $a:i\to k.$

Now, two QP's $(Q,W)$ and $(Q',W'),$ with the same set of vertices $V(Q)=V(Q'),$ are called equivalent if there is
a continuous isomorphism $\psi:\widehat{\C Q}\to \widehat{\C Q'},$ preserving the idempotents in the vertices, such that
$\psi(W)=W'.$ The QP $(Q,W)$ is called trivial if its Jacobi algebra $\hat{J}_{Q,W}$ is zero. Further, the potential $W$ on $Q$ is called reduced if no loops  or $2$-cycles contribute to $W.$ The property to be trivial or reduced is preserved under equivalences.

\begin{defi}If $(Q_1,W_1)$ and $(Q_2,W_2)$ are QP's with $V(Q_1)=V(Q_2),$ then the direct sum
$$(Q,W)=(Q_1,W_1)\oplus (Q_2,W_2)$$
is defined by putting
$$V(Q):=V(Q_1)=V(Q_2),\quad E(Q):=E(Q_1)\sqcup E(Q_2),\quad W:=W_1\oplus W_2.$$\end{defi}

The following is proved in \cite{DWZ}.

\begin{prop}Any QP $(Q,W)$ is equivalent to the direct sum of the reduced one and the trivial one:
$$(Q,W)\sim (Q,W)_{red}\oplus (Q,W)_{triv}.$$
Both $(Q,W)_{red}$ and $(Q,W)_{triv}$ are determined uniquely up to equivalence, and their equivalence classes are determined by the equivalence class of $(Q,W).$\end{prop}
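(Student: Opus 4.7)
The plan is to establish existence of the splitting by an iterative formal change of variables on $\widehat{\C Q}$ that removes $2$-cycles from $W$, and then to deduce uniqueness by an order-by-order matching of two such splittings.

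First I would decompose $W = W^{(2)} + W^{(\geq 3)}$, where $W^{(2)}$ consists of the $2$-cycle terms and $W^{(\geq 3)}$ involves only cycles of length at least $3$. Note $W^{(2)}$ is a bilinear form on the arrow space (nontrivial only on pairs $\alpha\colon i\to j$, $\beta\colon j\to i$), so a linear change of arrows preserving idempotents puts it in normal form $W^{(2)}=\sum_{i=1}^{r}\alpha_i\beta_i$, with the remaining arrows not appearing in $W^{(2)}$. Next I would apply a formal substitution of the shape $\alpha_i\mapsto \alpha_i - \partial_{\beta_i}W^{(\geq 3)}$, $\beta_i\mapsto \beta_i - \partial_{\alpha_i}W^{(\geq 3)}$, which preserves $W$ modulo longer paths but strictly increases the minimal length of a path in $W^{(\geq 3)}$ that still contains one of the $\alpha_i$ or $\beta_i$. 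Iterating and passing to the limit in the $\mathfrak{m}$-adic topology (where $\mathfrak{m}\subset\widehat{\C Q}$ is the two-sided ideal generated by arrows) yields an equivalent potential $W'=\sum_{i=1}^r\alpha_i\beta_i + W'_{\mathrm{red}}$, with $W'_{\mathrm{red}}$ using only the arrows outside the chosen pairs and containing no $2$-cycles. This is exactly the splitting $(Q,W)\sim (Q,W)_{\mathrm{triv}}\oplus (Q,W)_{\mathrm{red}}$.

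For uniqueness, suppose $\psi\colon \widehat{\C Q}\to \widehat{\C Q'}$ is a continuous idempotent-preserving isomorphism carrying one split potential to another. The decomposition of the arrow space of each side as (arrows in the trivial factor) $\oplus$ (arrows in the reduced factor) is determined linearly by the bilinear form $W^{(2)}$: the trivial arrows span a complement to the kernel of this form, and the reduced arrows span the kernel. Hence the linear part of $\psi$ already matches the two factors up to a block-triangular automorphism. I would then inductively adjust $\psi$ order by order in path length: if the two splittings agree modulo paths of length $\leq d$, a formal change of variables supported in degree $d$, chosen from the automorphism group of the trivial QP, can be used to correct the discrepancy modulo length $d+1$. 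Convergence in the $\mathfrak{m}$-adic topology produces the desired separate equivalences between reduced parts and between trivial parts.

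The main obstacle is this uniqueness step: the order-$d$ correction must not disturb the agreement already achieved in lower orders, and each such correction must lie in the image of a well-understood piece of the group of triangular automorphisms of $\widehat{\C Q}$. In particular one uses that the automorphism group of a trivial QP $(\sum\alpha_i\beta_i)$ acts transitively on the ways of writing $W^{(2)}$ in normal form, which provides enough flexibility to absorb the degree-$d$ error term at each stage. The convergence of both the existence iteration and the uniqueness iteration in the $\mathfrak{m}$-adic topology is what closes the argument.
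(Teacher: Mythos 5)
The paper does not actually prove this proposition: it is quoted from \cite{DWZ} (the Splitting Theorem), so your sketch has to be measured against the argument given there, which your plan follows in outline. Your existence step is the standard route (normalize the degree-$2$ part, then remove the trivial-arrow terms of $W^{(\geq 3)}$ by an $\mathfrak{m}$-adically convergent sequence of unitriangular substitutions), but the specific substitution you write down does not do what you claim: if a lowest-degree term $t$ of $W^{(\geq 3)}$ contains $k\geq 2$ of the arrows $\alpha_i,\beta_i$, then cyclically $\sum_i\alpha_i\partial_{\alpha_i}W^{(\geq 3)}+\sum_i(\partial_{\beta_i}W^{(\geq 3)})\beta_i$ contributes $k\cdot t$, so after substituting $\alpha_i\mapsto\alpha_i-\partial_{\beta_i}W^{(\geq 3)}$, $\beta_i\mapsto\beta_i-\partial_{\alpha_i}W^{(\geq 3)}$ the term survives with coefficient $1-k$, and the minimal length of trivial-arrow terms does not increase; iterating your recipe then oscillates (for $k=2$) or grows (for $k\geq 3$) instead of converging. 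This is repairable by the bookkeeping of \cite{DWZ} (cancel one marked occurrence of a trivial arrow per term, rotating each cycle so that a trivial arrow stands first), but as written your induction does not close.

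The more serious gap is in the uniqueness half. The assertion that, once the two splittings agree modulo paths of length $\leq d$, the degree-$d$ discrepancy can be absorbed by a change of variables ``chosen from the automorphism group of the trivial QP'' is precisely the nontrivial content of the Splitting Theorem, and you assert it rather than prove it; moreover it is not the right class of corrections. The automorphisms one actually needs send the trivial arrows to themselves plus higher-order expressions involving the \emph{reduced} arrows (unitriangular automorphisms of all of $\widehat{\C Q}$, not of the trivial factor), and one must check simultaneously that each correction preserves the splitting already achieved in lower orders, that the corrected equivalence still carries a split potential to a split potential, and that the limit restricts to an equivalence of the reduced parts. None of this follows from transitivity of the automorphism group of the trivial QP on normal forms of $W^{(2)}$, which is a purely linear statement about the degree-$2$ pairing; this inductive correction is exactly the part to which \cite{DWZ} devote the bulk of their proof. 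The easy portions of your uniqueness argument are fine: the degree-$2$ component of $\psi(W)$ is the image of that of $W$ under the linear part of $\psi$, so its rank data is an invariant and the trivial summand is determined up to equivalence.
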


Now, one defines
$$\mu_k(Q,W)=\mu_k^{pre}(Q,W)_{red}.$$
Hence, mutation is well-defined up to equivalence.

\subsection{Decorated representations and their mutations}
\label{ss:DRM}

The notion of a decorated representations and the construction of their mutations appeared in \cite{DWZ}.

Let $(Q,W)$ be a formal QP. Put $R_Q=\C^{V(Q)},$ considered as semi-simple commutative algebra.

\begin{defi}A decorated representation $\cM=(M,V)$ of $(Q,W)$ is a pair of finite-dimensional  $\hat{J}_{Q,W}$-module $M$
and the finite-dimensional $R_Q$-module $V.$\end{defi}

If $(Q',W')$ is another QP with $V(Q')=V(Q),$ and $\cM'=(M',V')$ is a decorated representation of $(Q',W'),$ then $\cM$ and $\cM'$
are said to be equivalent if there exist a triple $(\psi,\phi,\eta),$ where:

1) $\psi:\widehat{\C Q}\to \widehat{\C Q'}$ is a continuos isomorphism of algebras, such that $\psi(W)=W'$

2) $\phi:M\to M'$ is an isomorphism of $\widehat{\C Q}$-modules, where the $\widehat{\C Q}$-module structure on $M'$ comes from $\psi;$

3) $\eta:V\to V'$ is an isomorphism of $R_Q$-modules.

Now, for any decorated representation $\cM=(M,V)$ of the QP $(Q,W),$ we can choose an equivalence $$(Q,W)\sim (Q,W)_{red}\oplus (Q,W)_{triv}$$ as above,
giving an isomorphism $\hat{J}_{Q,W}\cong\hat{J}_{(Q,W)_{red}},$ and treat $M$ as a $\hat{J}_{(Q,W)_{red}}$-module. Hence,
we get a decorated representation $\cM_{red}$ of $(Q,W)_{red},$ defined up to equivalence.

Now, assume that there are no loops at the vertex $k\in V(Q).$ We define the pre-mutation $\mu_k^{pre}(\cM)=(\bar{M},\bar{V}),$ which is a decorated representation
of $\mu_k^{pre}(Q,W).$ Put $$M_{in}:=\bigoplus\limits_{k\to j}M_j,\quad M_{out}:=\bigoplus\limits_{i\to k}M_i.$$ We have natural maps
$\alpha:M_{in}\to M_k,$ $\beta:M_k\to M_{out}.$ Further, we have a natural map
$$\gamma:M_{out}\to M_{in},$$
with the components $\partial_{[ba]}W_2,$ where $a:i\to k,$ $b:k\to j,$ and $W_2$ is the above component of the potential $\mu_k^{pre}(W),$
obtained from $W$ by replacing each peace $ba$ by the edge $[ba].$ Here we treat $\partial_{[ba]}W_2$ as elements of $\widehat{\C Q},$
replacing each arrow of the kind $[b'a']$ by the product $b'a'.$

A straightforward checking shows that $\alpha\gamma=0,$ $\gamma\beta=0.$ We define
$$\bar{M}_i:=M_i,\quad \bar{V}_i:=V_i\quad\text{for }i\ne k,$$ and
$$\bar{M}_k:=\frac{\ker \gamma}{\im\beta}\oplus\im\gamma\oplus\frac{\ker\alpha}{\im\gamma}\oplus V_k,\quad \bar{V}_k:=\frac{\ker\beta}{\ker\beta\cap\im\alpha}.$$

We need to define the action of arrows of $\mu_k^{pre}(Q)$ on $\bar{M}_i.$ If $c$ is an arrow of $Q,$ not incident to $k,$ then $c$ acts on $\bar{M}$ in the same way as on $M.$
further, if $a:i\to k,$ $b:k\to j$ are arrows in $Q,$ then $[ba]$ acts on $\bar{M}$ in the same way as the product $ba$ acts on $M.$
We need to define the action of arrows $\bar{a},$ $\bar{b}.$ Their action is given by the maps
$$\bar{\alpha}:M_{out}\to\bar{M}_k\quad\bar{\beta}:\bar{M}_k\to M_{in},$$
defined as follows. Choose some projection $\rho:M_{out}\to\ker\gamma,$ i.e. $\rho_{\mid\ker\gamma}=\id.$
Also, choose a splitting $\sigma:\ker\alpha/\im\gamma\to\ker\alpha.$

The non-zero components of $\bar{\alpha}$ are $\gamma:M_{out}\to\im\gamma,$ and $\pi\rho:M_{out}\to \ker\gamma/\im\beta,$
where $\pi:\ker\gamma\to\ker\gamma/\im\beta$ is the natural projection.

The non-zero components of $\bar{\beta}$ are the inclusion $\im\gamma\hookrightarrow M_{in},$ and $\iota\sigma:\ker\alpha\to\im\gamma/M_{in},$
where $\iota:\ker\alpha\to M_{in}$ is the inclusion.

One can check that the defined action of $\C \mu_k^{pre}(Q)$ on $\bar{M}$ actually defines the structure of $\hat{J}_{\mu_k^{pre}(Q,W)}$-module
structure on $\hat{M}.$ We put $$\mu_k(\cM)=\mu_k^{pre}(\cM)_{red}.$$

Again, the decorated representation $\mu_k(\cM)$ is defined up to equivalence.

\subsection{Derived equivalences}
\label{ss:DE}

The following is the result of Keller and Yang \cite{KY}:

\begin{theo}\label{Keller_Yang}1) If $$(Q,W)\sim (Q',W')\oplus (Q'',W''),$$
where $(Q'',W'')$ is trivial QP, then the Ginzburg DG algebras $\hat{\Gamma}_{Q,W}$ and $\hat{\Gamma}_{Q',W'}$
are quasi-isomorphic.

2) Let $\Gamma=\hat{\Gamma}_{Q,W},$ $\Gamma'=\hat{\Gamma}_{\mu_k(Q,W)}.$ We have two equivalences
$$\Phi_{k,+},\Phi_{k,-}:D(\Gamma)\to D(\Gamma'),$$
such that

$\bullet$ We have $\Phi_{k,\pm}^{-1}(\Gamma_i')=\Gamma_i,$ $i\ne k;$

$\bullet$ We have exact triangles

$$\Phi_{k,+}^{-1}(\Gamma_k')\to \bigoplus\limits_{i\to k}\Gamma_i\to \Gamma_k\to \Phi_{k,+}^{-1}(\Gamma_k')[1];$$
$$\Gamma_k\to \bigoplus\limits_{k\to j}\Gamma_j\to \Phi_{k,-}^{-1}(\Gamma_k')\to \Gamma_k[1].$$
\end{theo}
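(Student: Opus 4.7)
The plan separates the two statements, with Part 1 being a homotopical triviality result and Part 2 the core tilting construction. For Part 1, I would reduce to showing that the Ginzburg DG algebra $\hat\Gamma_{Q'',W''}$ of a reduced trivial QP is quasi-isomorphic to the semisimple algebra $R_{Q''} = \C^{V(Q'')}$ sitting in degree zero. Triviality of $(Q'', W'')$ forces every arrow to appear in a $2$-cycle contributing to $W''$, so on the subalgebra generated by the arrows and their duals one writes an explicit contracting homotopy: the Ginzburg differential pairs $a^*$ with the partner arrow $a'$, and the loops $t_i$ are killed against $\sum_a [a,a^*]$ by an iterative homotopy converging in the complete topology. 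An equivalence $(Q, W) \sim (Q', W') \oplus (Q'', W'')$ then yields, via Keller's pseudo-compact DG machinery, a quasi-isomorphism $\hat\Gamma_{Q,W} \simeq \hat\Gamma_{Q',W'}$ once the trivial factor has been contracted.

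For Part 2, I would construct tilting DG $\Gamma$-modules by setting
$$T_{k,+} := \operatorname{fib}\Bigl(\bigoplus_{a:\,i\to k}\Gamma_i \xrightarrow{(a\,\cdot\,)} \Gamma_k\Bigr), \qquad T_{k,-} := \operatorname{cone}\Bigl(\Gamma_k \xrightarrow{(b\,\cdot\,)} \bigoplus_{b:\,k\to j}\Gamma_j\Bigr),$$
$T_{\pm,i} := \Gamma_i$ for $i\neq k$, and $T_\pm := \bigoplus_i T_{\pm,i}$. First one checks that $T_\pm$ generates $\Perf(\Gamma)$ (immediate from the defining triangles, which recover $\Gamma_k$) and is silting, i.e.\ $\Ext^{>0}_\Gamma(T_\pm, T_\pm) = 0$, using the Koszul resolutions of Proposition \ref{Koszul_res}. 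The heart of the argument is then to identify the DG endomorphism algebra $\operatorname{REnd}_\Gamma(T_+)$ with $\hat\Gamma_{\mu_k^{\mathrm{pre}}(Q,W)}$: one writes down closed cochain-level representatives for the generators $\bar a, \bar b, [ba]$ of $\mu_k^{\mathrm{pre}}(Q)$ and their duals, and verifies that the induced differential on $\operatorname{REnd}_\Gamma(T_+)$ matches the cyclic derivatives of $\mu_k^{\mathrm{pre}}(W) = W_1 + W_2$, including the cubic glueing piece $\sum [ba]\bar a\bar b$ that arises naturally from the defining triangle of $T_{k,+}$. A final appeal to Part 1 identifies $\hat\Gamma_{\mu_k^{\mathrm{pre}}(Q,W)}$ with $\Gamma' = \hat\Gamma_{\mu_k(Q,W)}$.

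The main obstacle will be this endomorphism-DGA computation. In principle $\operatorname{REnd}_\Gamma(T_+)$ could carry non-trivial $A_\infty$ corrections that deform it away from a strict DG algebra, so a substantive part of the work is to choose cofibrant models carefully and check that the higher operations vanish, producing an honest DG-algebra isomorphism (rather than merely an $A_\infty$ quasi-isomorphism) with $\hat\Gamma_{\mu_k^{\mathrm{pre}}(Q,W)}$. Once this identification is established, Keller's Morita theorem for DG algebras produces the equivalence $\Phi_{k,+} = \operatorname{RHom}_\Gamma(T_+, -) : D(\Gamma) \stackrel{\sim}{\to} D(\Gamma')$; the two triangles in the statement hold by construction of $T_{k,\pm}$, and $\Phi_{k,-}$ is built symmetrically from $T_-$.
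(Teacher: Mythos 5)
A preliminary remark: the paper itself gives no proof of Theorem \ref{Keller_Yang}; it is quoted as a result of Keller and Yang and the reader is referred to \cite{KY}, so your proposal can only be measured against the argument in that reference. With that understood, your outline follows essentially the same architecture as the actual Keller--Yang proof: the equivalences are realized by a tilting-type object whose summand at $k$ is the fibre (resp.\ cone) appearing in the two triangles of the statement, its (derived) endomorphism DG algebra is identified with $\hat{\Gamma}_{\mu_k^{pre}(Q,W)}$ --- this identification being exactly where the mutated potential $W_1+W_2$ enters --- and the passage from the premutation to $\mu_k(Q,W)$ is handled by part 1), i.e.\ invariance under splitting off a trivial QP.

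Three points in your sketch deserve more care than you give them. First, the potentials here are formal, so $\hat{\Gamma}_{Q,W}$ must be treated as a pseudocompact (topological) DG algebra and all module categories, cofibrant replacements and Morita-type statements taken in that setting; this is the content of the appendix of \cite{KY} and is not a cosmetic issue, since your silting computation $\Ext^{>0}(T_\pm,T_\pm)=0$ uses that $H^0$ is the \emph{complete} Jacobi algebra. Second, your worry about $A_\infty$-corrections to $\operatorname{REnd}_{\Gamma}(T_+)$ is circumvented in \cite{KY} by running the argument in the opposite direction: one writes an explicit $\hat{\Gamma}_{\mu_k^{pre}(Q,W)}$--$\Gamma$-bimodule (defining the action of the arrows $\bar a,\bar b,[ba]$ and of the degree $-1$ and $-2$ generators on the cone by hand, the Leibniz rule being exactly the cyclic-derivative relations of $W_1+W_2$) and then checks that the induced map to the endomorphism DG algebra is a quasi-isomorphism; no strictification of a minimal model is needed. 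Third, in part 1) the contraction is not internal to the trivial summand: $d(t_i)$ involves the commutators $[a,a^*]$ for \emph{all} arrows at $i$, including those of $(Q',W')$, so one cannot simply contract $\hat{\Gamma}_{Q'',W''}$ in isolation; Keller--Yang instead show directly that the projection $\hat{\Gamma}_{Q,W}\to\hat{\Gamma}_{Q',W'}$ killing the arrows of $Q''$ and their duals is a quasi-isomorphism, by a filtration/homotopy argument in the complete setting. With these adjustments your plan is a faithful summary of the proof in \cite{KY}.
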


\subsection{Tilting}
\label{ss:Tilting}

Let $\cD$ be any triangulated category, and $(\cD_{\leq 0},\cD_{\geq 1})$ a t-structure on $\cD,$ with the heart $\cA=\cD_{\leq 0}\cap\cD_{\geq 0}.$ Let $(T,F)$ be a torsion pair in $\cA,$ i.e. $T,F\subset \cA$ are full subcategories,
$\Hom(T,F)=0,$ and for any object $E$ of $\cA$ there is a short exact sequence $$0\to E_{T}\to E\to E_{F}\to 0$$
with $E_{T}\in T,$ $E_{F}\in F.$ Then, the tilted t-structure $(\cD_{\leq 0}^{T[-1],F},\cD_{\geq 1}^{T[-1],F})$
is defined by the formulas
$$\cD_{\leq 0}^{(T[-1],F)}=\{X\in\cD_{\leq 1}\mid H^1_{\cA}(X)\in T,\},$$ $$\cD_{\geq 1}^{T[-1],F}=\{X\in\cD_{\geq 1}\mid H^1_{\cA}(X)\in F\}.$$
Also, put $\cA^{(T[-1],F)}:=\cD_{\leq 0}^{(T[-1],F)}\cap \cD_{\geq 0}^{(T[-1],F)}.$

Shifting by $1,$ we get the t-structure $(\cD_{\leq 0}^{(T,F[1])},\cD_{\geq 1}^{(T,F[1])}),$ with the heart $\cA^{(T,F[1])}.$

The following was proved by Keller and Yang \cite{KY}:

\begin{theo}With the above notation,
$$\Phi_{k,+}^{-1}(\rmMod \hat{J}_{\mu_k(Q,W)})=
(\rmMod \hat{J}_{Q,W})^{(S_{k}^{\oplus}[-1],S_k^{\perp})},$$
$$\Phi_{k,-}^{-1}(\rmMod \hat{J}_{\mu_k(Q,W)})=
(\rmMod \hat{J}_{Q,W})^{({}^{\perp}S_k,S_{k}^{\oplus}[1])}.$$
Here $S_k^{\oplus}$ is the subcategory of all direct sums of copies of $S_k.$\end{theo}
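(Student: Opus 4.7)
The plan is to identify the two subcategories by matching simple objects inside $D(\hat{\Gamma}_{Q, W})$. The statement for $\Phi_{k, -}$ is entirely symmetric and follows by the same argument with the roles of the two triangles swapped, so I focus on $\Phi_{k, +}$.

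First I would verify that $(T, F) := (S_k^{\oplus}, S_k^{\perp})$ is a genuine torsion pair in $\cA := \rmMod \hat{J}_{Q, W}$. The absence of loops at $k$ in $Q$ gives $\Ext^1_{\hat{J}}(S_k, S_k) = 0$, so $S_k^{\oplus}$ is closed under extensions; for any $E \in \cA$ the $S_k$-isotypic component of the socle provides the torsion subobject $E_T$, with $E/E_T \in S_k^{\perp}$. It follows that the simple objects of the tilted heart $\cA^{(T[-1], F)}$ are exactly $\{S_i : i \neq k\} \cup \{S_k[-1]\}$. Second, I would compute $\Phi_{k, +}^{-1}(S'_j)$ for each $j \in V(\mu_k(Q))$. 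Apply $\Phi_{k, +}^{-1}$ to the Koszul resolution of Proposition \ref{Koszul_res},
\[
\iota(S'_j) \simeq \bigl[\hat{\Gamma}'_j \to \bigoplus_{\beta: j \to l}\hat{\Gamma}'_l \to \bigoplus_{\alpha: i \to j}\hat{\Gamma}'_i \to \hat{\Gamma}'_j\bigr],
\]
substituting $\Phi_{k, +}^{-1}(\hat{\Gamma}'_l) = \hat{\Gamma}_l$ for $l \neq k$ and the defining triangle for $l = k$. A combinatorial computation, exploiting that the arrows of $\mu_k(Q)$ incident to $k$ are the reverses of those of $Q$, that new composite arrows $[ba]$ are governed by the pre-mutation potential $W_1 = \sum [ba]\bar{a}\bar{b}$, and that the Koszul differential involves the partial derivatives of $\mu_k(W)$, yields
\[
\Phi_{k, +}^{-1}(S'_i) \simeq S_i \ \text{for } i \neq k, \qquad \Phi_{k, +}^{-1}(S'_k) \simeq S_k[-1].
\]

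Finally, both $\Phi_{k, +}^{-1}(\rmMod \hat{J}_{\mu_k(Q, W)})$ and $\cA^{(T[-1], F)}$ are hearts of bounded t-structures on $D^b(\hat{\Gamma}_{Q, W})$ that are length categories whose simples coincide inside $D(\hat{\Gamma}_{Q, W})$; hence the two subcategories agree. The main obstacle will be the Koszul computation for $j = k$: substituting the defining triangle for $\hat{\Gamma}'_k$ into a four-term complex that itself contains several copies of $\hat{\Gamma}'_k$ produces a nested iterated cone whose reduction to $S_k[-1]$ requires careful bookkeeping of the Ginzburg differential together with $\partial \mu_k(W)$. A conceptual shortcut, which I would ultimately prefer, is to realize $\hat{\Gamma}_{\mu_k(Q, W)}$ as the DG endomorphism algebra of the mutated silting collection $\{\Phi_{k, +}^{-1}(\hat{\Gamma}'_j)\}_{j \in V(Q)}$ inside $D(\hat{\Gamma}_{Q, W})$, whence the identification of simples, and therefore of hearts, follows formally from DG Morita theory.
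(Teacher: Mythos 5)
First, a framing remark: the paper does not prove this theorem at all — it is quoted from Keller--Yang \cite{KY} — so the only question is whether your argument is correct, and it has a genuine gap at its central step: the identification of the simple objects is wrong, on both sides, for every vertex $j\ne k$ joined to $k$ by an arrow in the relevant direction. Concretely, put $X=\Phi_{k,+}^{-1}(S'_j)$ for $j\ne k$. Since $\Phi_{k,+}^{-1}(\Gamma'_i)=\Gamma_i$ for $i\ne k$, we have $\Hom^n(\Gamma_i,X)=\delta_{ij}\delta_{n0}\,\C$ for $i\ne k$ and $\Hom^{\bullet}(\Phi_{k,+}^{-1}(\Gamma'_k),X)=0$; applying $\Hom(-,X)$ to the triangle $\Phi_{k,+}^{-1}(\Gamma'_k)\to\bigoplus_{i\to k}\Gamma_i\to\Gamma_k\to\Phi_{k,+}^{-1}(\Gamma'_k)[1]$ then gives $\Hom^n(\Gamma_k,X)\cong\bigoplus_{i\to k}\Hom^n(\Gamma_i,X)$. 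Hence $X$ is a module with dimension vector $e_j+a_{jk}e_k$ ($a_{jk}$ the number of arrows $j\to k$ in $Q$): it is not $S_j$ unless $a_{jk}=0$; it is a non-split (universal) extension with submodule $S_j$ and quotient a direct sum of $a_{jk}$ copies of $S_k$. Correspondingly, your description of the simples of the tilted heart is also false: if $\Ext^1(S_k,S_j)\ne 0$ (which, with the paper's right-module convention, happens exactly when $a_{jk}\ne 0$), choose a non-split extension $0\to S_j\to E\to S_k\to 0$; then $E\in S_k^{\perp}$, and the rotated triangle $S_k[-1]\to S_j\to E\to S_k$ has all three terms in $(\rmMod\hat{J}_{Q,W})^{(S_k^{\oplus}[-1],S_k^{\perp})}$, so it is a short exact sequence there, exhibiting $S_k[-1]$ as a proper subobject of $S_j$ in the tilted heart. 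Already for $Q\colon 1\to 2$, $W=0$, $k=2$ both of your lists of simples are incorrect. The correct simples are $S_k[-1]$ together with these universal extensions, and showing that they coincide with the $\Phi_{k,+}^{-1}(S'_j)$ is precisely the computation you postpone to the ``combinatorial Koszul computation'' — whose asserted outcome is wrong — so as written the argument only covers the degenerate case where no arrows point from the other vertices into $k$.

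Two further points. The theorem is stated for the full module categories $\rmMod\hat{J}$, i.e.\ hearts of t-structures on the unbounded category $D(\hat{\Gamma}_{Q,W})$; your concluding step (two length hearts of bounded t-structures with the same simples coincide) applies only to the finite-dimensional hearts inside $D^b(\hat{\Gamma}_{Q,W})$, so even after correcting the simples you would need an extra argument to pass to $\rmMod$ (for instance a direct description of the aisles, as in Keller--Yang, rather than a simples-matching argument). Finally, the proposed ``DG Morita theory'' shortcut produces the equivalence $\Phi_{k,+}$ from the silting object $\bigoplus_j\Phi_{k,+}^{-1}(\Gamma'_j)$, but it does not by itself determine the image of the heart (equivalently, of the simples); that determination is the entire content of the statement, so nothing there ``follows formally.''
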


From this moment we assume that the quiver $Q$ does not have loops and $2$-cycles. We say that QP $(Q,W)$ is well-mutatable
at $k$ if the quiver $\mu_k(Q,W)$ does not contain $2$-cycles (it cannot contain loops).
If $\un{k}=(k_1,\dots,k_r)$ is the sequence of vertices of $Q,$ $k_i\ne k_{i-1},$ then the property to be well-mutatable
with respect to the sequence $\un{k}$ is defined inductively.

Let $\un{k}=(k_1,\dots,k_r)$ be the sequence of vertices, and suppose that the
QP $(Q,W)$ is well-mutatable with respect to the sequence $\un{k}.$ For $1\leq i\leq r,$ put $(Q_i,W_i)=\mu_{k_i}(\dots\mu_{k_1}(Q,W)).$  The following result was proved by K. Nagao \cite{N}.

\begin{theo}\label{Nagao}There exists a unique sequence of signs $\epsilon_1,\dots,\epsilon_r\in\{\pm\}$ such that
for each $1\leq i\leq r$ we have $$(\Phi_{k_i,\epsilon_i}\circ\dots\Phi_{k_1,\epsilon_1})^{-1}(\rmMod \hat{J}_{Q_i,W_i})=(\rmMod \hat{J}_{Q,W})^{(T_i[-1],F_i)},$$ for some torsion pair $(T_i,F_i)$ in $\rmMod \hat{J}_{Q,W}.$\end{theo}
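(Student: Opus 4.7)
The plan is to argue by induction on $i$, with the sign at each step forced by the requirement that the new heart lies in $\cD_{[0,1]}$ with respect to the original t-structure. Set $\cA_0 := \rmMod \hat{J}_{Q,W}$. The base case $i = 0$ is trivial with $T_0 = 0$, $F_0 = \cA_0$. Assume the claim through $i - 1$, so that $\cA_{i-1} := (\Phi_{k_{i-1},\epsilon_{i-1}} \circ \cdots \circ \Phi_{k_1,\epsilon_1})^{-1}(\rmMod \hat{J}_{Q_{i-1},W_{i-1}})$ equals $\cA_0^{(T_{i-1}[-1], F_{i-1})}$; by the recollection in Subsection \ref{ss:Tilting}, $\cA_{i-1} \subset \cD_{[0,1]}(\cA_0)$, with objects $X$ characterized by $H^0_{\cA_0}(X) \in F_{i-1}$ and $H^1_{\cA_0}(X) \in T_{i-1}$. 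Let $\tilde{S} \in \cA_{i-1}$ denote the preimage of the simple at $k_i$; it is a simple object of $\cA_{i-1}$. The $\cA_0$-truncation triangle yields a short exact sequence in $\cA_{i-1}$,
\[
0 \to H^1_{\cA_0}(\tilde{S})[-1] \to \tilde{S} \to H^0_{\cA_0}(\tilde{S}) \to 0,
\]
whose outer terms lie in $T_{i-1}[-1] \subset \cA_{i-1}$ and $F_{i-1} \subset \cA_{i-1}$ respectively, so simplicity of $\tilde{S}$ forces exactly one of the two cohomologies to vanish.

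The sign is then determined. Per Theorem \ref{Keller_Yang}, the positive tilt $\Phi_{k_i,+}^{-1}$ is the HRS tilt of $\cA_{i-1}$ at $(\tilde{S}^{\oplus}[-1], \tilde{S}^{\perp_{\cA_{i-1}}})$, which sends $\tilde{S}$ into cohomological degree $+1$ of $\cA_{i-1}$, hence raises its $\cA_0$-degree by one; the negative tilt $\Phi_{k_i,-}^{-1}$ does the opposite. For $\cA_i$ to remain inside $\cD_{[0,1]}(\cA_0)$ we must choose $\epsilon_i = +$ when $\tilde{S} \in F_{i-1} \subset \cA_0$ (so that $\tilde{S}[-1]$ sits in $\cA_0$-degree $1$), and $\epsilon_i = -$ when $\tilde{S} = N[-1]$ with $N \in T_{i-1}$ (so that $\tilde{S}[1] = N$ sits in $\cA_0$-degree $0$). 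Since the two cases are mutually exclusive, this also yields uniqueness of the entire sign sequence.

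Having fixed $\epsilon_i$, we construct $(T_i, F_i)$ explicitly. In the case $\epsilon_i = +$, take $T_i$ to be the smallest torsion class in $\cA_0$ containing $T_{i-1}$ together with $\tilde{S}^{\oplus}$ (concretely, the class of $\cA_0$-quotients of extensions of copies of $\tilde{S}$ by objects of $T_{i-1}$), and let $F_i$ be its right orthogonal in $\cA_0$; in the case $\epsilon_i = -$, dually take $F_i$ to be the smallest torsion-free class in $\cA_0$ containing $F_{i-1}$ together with $N^{\oplus}$, and let $T_i$ be its left orthogonal. A routine long exact cohomology computation, applied in $\cA_0$ to the defining two-step $\cA_{i-1}$-filtration of an object $Y \in \cA_i$, then shows that $(T_i, F_i)$ is a torsion pair in $\cA_0$ and that $\cA_0^{(T_i[-1], F_i)} = \cA_i$. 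The main obstacle is precisely this last verification: a general composite of two HRS tilts need not be an HRS tilt of the original heart, and the compatibility here relies essentially on the fact, established above, that $\tilde{S}$ is concentrated in a single $\cA_0$-degree.
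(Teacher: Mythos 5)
Your proposal is correct and follows essentially the same route as the paper's proof: an induction in which the sign $\epsilon_i$ is dictated by whether the preimage of the simple lies in $F_{i-1}$ or in $T_{i-1}[-1]$, with the composite torsion pair given explicitly (your ``smallest torsion class containing $T_{i-1}$ and $\tilde S$'' coincides with the paper's $T_{i-1}\star\tilde S^{\oplus}$, resp.\ the dual description), and the final compatibility check left as a routine verification, exactly as in the paper's auxiliary lemma. Your simplicity argument for the dichotomy is a welcome detail the paper leaves implicit; the only slip is that in the tilted heart the canonical short exact sequence runs $0\to H^0_{\cA_0}(\tilde S)\to\tilde S\to H^1_{\cA_0}(\tilde S)[-1]\to 0$ (sub and quotient swapped relative to what you wrote), which does not affect the argument.
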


\begin{proof}We present the proof here for completeness. For convenience, for two strictly full subcategories $\cC_1,\cC_2$ of an abelian or triangulated category,
denote by $\cC_1\star\cC_2$ the subcategory of all extensions of the objects of $\cC_2$ by the objects of $\cC_1.$

\begin{lemma}Let $\cD$ be a triangulated category, and $(\cD_{\leq 0},\cD_{\geq 1})$ a t-structure with the heart $\cA.$ Let $(T,F)$ be a torsion pair in $\cA,$ and $(T',F')$ a torsion pair in $\cA^{(T[-1],F)}$ such that
$T'\subset F[0]$ (resp. $F'\subset T[-1]$). Then we have
$$(\cD_{\leq 0}^{(T[-1],F)})^{(T'[-1],F')}=\cD_{\leq 0}^{(T''[-1],F'')}\text{ (resp. }(\cD_{\leq 0}^{(T[-1],F)})^{(T',F'[1])}=\cD_{\leq 0}^{(T''[-1],F'')})$$
(and hence the same for $\cD_{\geq 1}$ and $\cA$), where $T''=T\star T',$ $F''=F\cap T'^{\perp}.$ (resp. $T''=T\cap{}^{\perp}(F'[1]),$ $F''=(F'[1])\star F$).\end{lemma}

\begin{proof}Straightforward.\end{proof}

Now define the sequence $\epsilon_1,\dots,\epsilon_r$ inductively. The unique choice for $\epsilon_1$ is $"+"$.
If the signs $\epsilon_1,\dots,\epsilon_i$ are already defined, for some $1\leq i\leq r-1,$ then consider two cases.

1) The simple $\hat{J}_{Q_i,W_i}$-module $S_{k_{i+1}}$ is contained in $\Phi_{k_i,\epsilon_i}\circ\dots\Phi_{k_1,\epsilon_1}(F_i).$
Then we put $\epsilon_{i+1}=+.$

2) The simple $\hat{J}_{Q_i,W_i}$-module $S_{k_{i+1}}$ is contained in $\Phi_{k_i,\epsilon_i}\circ\dots\Phi_{k_1,\epsilon_1}(T_i[-1]).$ Then we put $\epsilon_{i+1}=-.$

It follows from the above Lemma that the constructed sequence satisfies the required properties. It is easy to see that it is unique with the required properties.
\end{proof}

\subsection{Connection with Plamondon's results}
\label{ss:Plamondon}

Plamondon \cite{P1} constructed cluster character for arbitrary quiver with potential.
For any triangulated category $\cD,$ and any rigid object $E\in\cD$ (i.e. $\Hom^1(E,E)=0$), he defines the subcategory
$$\pr_{\cD}E:=\{Cone(E^{1}\to E^0)\mid E^0,E^1\in\Add(E)\},$$
where $\Add(E)$ is the full subcategory of finite direct sums of direct summands of $E.$ Further,
he proves that the natural projection
$$\pi:\Perf(\hat{\Gamma}_{Q,W})\to\cC:=\Perf(\hat{\Gamma}_{Q,W})/D^b(\hat{\Gamma}_{Q,W})$$
yields an equivalence $$\pr_{\Perf(\hat{\Gamma}_{Q,W})}(\hat{\Gamma}_{Q,W})\stackrel{\sim}{\to}\pr_{\cC}(\pi(\hat{\Gamma}_{Q,W})).$$
Further, Plamondon defines a suitable subcategory of the later category, and constructs the so-called cluster character,
describing all cluster monomials via Euler characteristics of quiver Grassmannians. For the details, we refer the reader to \cite{P1}.

We will need the following result which is analogous to Plamondon's Theorem (\cite{P1}, Theorem 2.18).

\begin{theo}\label{Plamondon}Suppose that the QP $(Q,W)$ is well-mutatable with respect to the sequence $\un{k}=(k_1,\dots,k_r),$
and $$(Q_r,W_r)=\mu_{k_r}(\dots(\mu_{k_1}(Q,W))\dots).$$
Then there is a sequence of signs $\epsilon_1,\dots,\epsilon_r$ such that for $1\leq i\leq r$ such that
$$(\Phi_{k_r,\epsilon_r}\circ\dots\circ\Phi_{k_1,\epsilon_1})^{-1}(\hat{\Gamma}_{Q_r,W_r})\in\pr_{\Perf(\hat{\Gamma}_{Q,W})}
(\hat{\Gamma}_{Q,W}[-1]).$$\end{theo}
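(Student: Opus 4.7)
The signs $\epsilon_1,\ldots,\epsilon_r$ will be taken to coincide with those produced by Nagao in the proof of Theorem~\ref{Nagao}. Set $\Phi^{(i)}:=\Phi_{k_i,\epsilon_i}\circ\cdots\circ\Phi_{k_1,\epsilon_1}$ and $\Gamma^{(i)}_j:=(\Phi^{(i)})^{-1}(\hat{\Gamma}_{Q_i,W_i,j})$. I will argue by induction on $i\in\{0,\ldots,r\}$ that every $\Gamma^{(i)}_j$ lies in $\pr_{\Perf(\hat{\Gamma}_{Q,W})}(\hat{\Gamma}_{Q,W}[-1])$. The base case $i=0$ is immediate, since $\hat{\Gamma}_{Q,W,j}\cong\Cone(\hat{\Gamma}_{Q,W,j}[-1]\to 0)$ with both source and target in $\Add(\hat{\Gamma}_{Q,W}[-1])$.

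For the inductive step, when $j\ne k_{i+1}$ Theorem~\ref{Keller_Yang} gives $\Gamma^{(i+1)}_j=\Gamma^{(i)}_j$, which lies in $\pr(\hat{\Gamma}_{Q,W}[-1])$ by the inductive hypothesis. When $j=k_{i+1}$, applying $(\Phi^{(i)})^{-1}$ to the Keller--Yang triangle exhibits $\Gamma^{(i+1)}_{k_{i+1}}$ as the fibre (if $\epsilon_{i+1}=+$) or the cone (if $\epsilon_{i+1}=-$) of a morphism $\psi$ between $X:=\bigoplus_{a}\Gamma^{(i)}_a$ and $Y:=\Gamma^{(i)}_{k_{i+1}}$, both already in $\pr(\hat{\Gamma}_{Q,W}[-1])$. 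Writing $X=\Cone(X^1\to X^0)$ and $Y=\Cone(Y^1\to Y^0)$ with $X^\bullet,Y^\bullet\in\Add(\hat{\Gamma}_{Q,W}[-1])$, the vanishing $\Hom^{>0}(\hat{\Gamma}_{Q,W},\hat{\Gamma}_{Q,W})=H^{>0}(\hat{\Gamma}_{Q,W})=0$---which holds because $\hat{\Gamma}_{Q,W}$ is concentrated in non-positive degrees---permits a chain-level lift $\psi^\bullet:(X^1\to X^0)\to(Y^1\to Y^0)$ of $\psi$, whose mapping fibre/cone gives an a priori three-term presentation of $\Gamma^{(i+1)}_{k_{i+1}}$ in $\Add(\hat{\Gamma}_{Q,W}[-1])$.

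The role of Nagao's sign choice is precisely to collapse this three-term presentation to a two-term one. The sign $\epsilon_{i+1}$ is selected so that $S_{k_{i+1}}$ sits in the correct part of the tilted heart $(\rmMod\hat{J}_{Q,W})^{(T_i[-1],F_i)}$ of Theorem~\ref{Nagao}: namely, $\epsilon_{i+1}=+$ puts $S_{k_{i+1}}$ in $\Phi^{(i)}(F_i)$, and $\epsilon_{i+1}=-$ puts it in $\Phi^{(i)}(T_i[-1])$. Combined with the Koszul resolution of Proposition~\ref{Koszul_res} applied to $S_{k_{i+1}}$, this sign choice constrains one of the components $\psi^0,\psi^1$ to split off a contractible summand, leaving the desired two-term presentation. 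I would implement the collapse following Plamondon's strategy from \cite{P1}, Theorem~2.18: transfer the problem to the generalized cluster category $\cC:=\Perf(\hat{\Gamma}_{Q,W})/D^b(\hat{\Gamma}_{Q,W})$ via the projection $\pi$, use the analogue of Plamondon's equivalence $\pr_{\Perf(\hat{\Gamma}_{Q,W})}(\hat{\Gamma}_{Q,W}[-1])\overset{\sim}{\to}\pr_{\cC}(\pi(\hat{\Gamma}_{Q,W}[-1]))$, and exploit the $3$-Calabi--Yau symmetry of $\cC$ to render the collapse automatic.

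The hard part will be exactly this collapsing step, since it is the only place where Nagao's specific sign enters in an essential way. A direct argument inside $\Perf(\hat{\Gamma}_{Q,W})$ requires a delicate analysis of $\psi^\bullet$ via the torsion pair $(T_i,F_i)$ in order to exhibit the contractible summand, while the route through $\cC$ trades that difficulty for establishing Plamondon's equivalence with $\hat{\Gamma}_{Q,W}[-1]$ in place of $\hat{\Gamma}_{Q,W}$. Either route hinges on the rigidity of $\hat{\Gamma}_{Q,W}$ and on $\epsilon_{i+1}$ being the sign dictated by Nagao's inductive construction rather than an arbitrary choice.
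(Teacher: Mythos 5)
You should first be aware of how the paper itself handles this statement: it gives no independent proof. Theorem \ref{Plamondon} is introduced as ``analogous to Plamondon's Theorem (\cite{P1}, Theorem 2.18)'' and is used as a citation; the only thing proved in the paper is the subsequent Lemma \ref{Nagao_Plamondon}, which identifies the sign sequence with Nagao's. Measured against that, your proposal attempts more than the paper does, but it has a genuine gap, and you flag it yourself: everything you actually carry out --- the induction, the base case $\hat{\Gamma}_{Q,W,j}\cong\Cone(\hat{\Gamma}_{Q,W,j}[-1]\to 0)$, the reduction to the vertex $k_{i+1}$ via the Keller--Yang triangles, and the chain-level lift of $\psi$ using $\Hom^{>0}(\hat{\Gamma}_{Q,W},\hat{\Gamma}_{Q,W})=0$ --- only produces a \emph{three}-term presentation of $\Gamma^{(i+1)}_{k_{i+1}}$ in $\Add(\hat{\Gamma}_{Q,W}[-1])$. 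The assertion that Nagao's sign forces one of $\psi^0,\psi^1$ to split off a contractible summand is precisely the theorem, and no argument is given for it; likewise the claim that passing to $\cC=\Perf(\hat{\Gamma}_{Q,W})/D^b(\hat{\Gamma}_{Q,W})$ makes the collapse ``automatic'' by $3$-Calabi--Yau symmetry is unsubstantiated: $\cC$ is Hom-infinite here, and controlling mutation inside $\pr_{\cC}$ in exactly this setting is the technical achievement of \cite{P1}, not something one gets for free.

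If you want to close the gap without simply quoting \cite{P1}, the workable route is through the characterization \eqref{descr_of_pr}, i.e. $\pr_{\Perf(\hat{\Gamma}_{Q,W})}(\hat{\Gamma}_{Q,W}[-1])=D(\hat{\Gamma}_{Q,W})_{\leq 1}\cap{}^{\perp}D(\hat{\Gamma}_{Q,W})_{\leq(-1)}\cap\Perf(\hat{\Gamma}_{Q,W})$: you must verify that, with the sign dictated by whether $\Phi(i)^{-1}(S_{k_{i+1}})$ lies in $F_i$ or in $T_i[-1],$ the fibre resp.\ cone of the approximation map still satisfies both vanishing conditions. This is where the torsion pair of Theorem \ref{Nagao} genuinely enters, and it is the argument your proposal postpones. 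Note that no sign-blind argument can succeed: as the proof of Lemma \ref{Nagao_Plamondon} shows, with the wrong sign $\Phi(i+1)^{-1}(S_{k_{i+1}})$ sits in $(\rmMod\hat{J}_{Q,W})[-2]$ or $(\rmMod\hat{J}_{Q,W})[1],$ and the nonvanishing of $\Hom(\Phi(i+1)^{-1}(\hat{\Gamma}_{Q_{i+1},W_{i+1}}),\Phi(i+1)^{-1}(S_{k_{i+1}}))$ is incompatible with membership in $\pr_{\Perf(\hat{\Gamma}_{Q,W})}(\hat{\Gamma}_{Q,W}[-1]).$ So your outline is pointed in the right direction, but as it stands the decisive step is missing; either supply the verification above or do what the paper does and invoke Plamondon's Theorem 2.18 directly.
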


The next Lemma shows that this sequence of signs is actually the same as in Theorem \ref{Nagao}.

\begin{lemma}\label{Nagao_Plamondon}In the assumptions of the Theorem \ref{Plamondon}, the sequence of signs is actually unique
and satisfies the property of Theorem \ref{Nagao}.\end{lemma}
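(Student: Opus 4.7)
The plan is to proceed by induction on $r$, establishing uniqueness of $\epsilon_1,\ldots,\epsilon_r$ and their agreement with the Nagao sequence simultaneously. Set $\Psi_i = \Phi_{k_i,\epsilon_i}\circ\cdots\circ\Phi_{k_1,\epsilon_1}$ and let $(T_i,F_i)$ denote the Nagao torsion pair in $\rmMod \hat{J}_{Q,W}$ at step $i$. The base case $r=0$ is immediate since $\hat{\Gamma}_{Q,W}\in\pr(\hat{\Gamma}_{Q,W}[-1])$ trivially (take $E^0=0$, $E^1=\hat{\Gamma}_{Q,W}[-1]$). For the inductive step, assume $\epsilon_1,\ldots,\epsilon_{r-1}$ are uniquely determined by Plamondon's condition and coincide with Nagao's. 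Because $\Phi_{k_r,+}^{-1}$ and $\Phi_{k_r,-}^{-1}$ act identically on $\hat{\Gamma}_{Q_r,W_r,j}$ for $j\ne k_r$, the induction already places those summands in $\pr(\hat{\Gamma}_{Q,W}[-1])$; the sign $\epsilon_r$ is thus pinned down (if it exists) by the requirement that $X_{\epsilon_r}:=\Psi_{r-1}^{-1}\Phi_{k_r,\epsilon_r}^{-1}(\hat{\Gamma}_{Q_r,W_r,k_r})$ lie in $\pr(\hat{\Gamma}_{Q,W}[-1])$.

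Unwinding Theorem \ref{Keller_Yang}, the two candidates $X_\pm$ fit into the dual triangles
\[
X_+ \to \bigoplus_{i\to k_r}\Psi_{r-1}^{-1}(\hat{\Gamma}_{Q_{r-1},W_{r-1},i}) \to \Psi_{r-1}^{-1}(\hat{\Gamma}_{Q_{r-1},W_{r-1},k_r}) \to X_+[1],
\]
\[
\Psi_{r-1}^{-1}(\hat{\Gamma}_{Q_{r-1},W_{r-1},k_r}) \to \bigoplus_{k_r\to j}\Psi_{r-1}^{-1}(\hat{\Gamma}_{Q_{r-1},W_{r-1},j}) \to X_- \to \Psi_{r-1}^{-1}(\hat{\Gamma}_{Q_{r-1},W_{r-1},k_r})[1].
\]
In the cluster category $\Perf(\hat{\Gamma}_{Q,W})/D^b(\hat{\Gamma}_{Q,W})$ the two candidates coincide, so in $\Perf(\hat{\Gamma}_{Q,W})$ they differ by a shift of $\iota(\tilde S)$, where $\tilde S:=\Psi_{r-1}^{-1}(S_{k_r})$. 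The inductive Nagao property (together with the Lemma in the proof of Theorem \ref{Nagao}) tells us that $\tilde S$ lies in the tilted heart $(\rmMod \hat{J}_{Q,W})^{(T_{r-1}[-1],F_{r-1})}$, and as a simple of this heart it is either a simple object of $F_{r-1}$ (sitting in cohomological degree $0$ for the original t-structure, corresponding to Nagao sign $+$) or the $[-1]$-shift of a simple object of $T_{r-1}$ (sitting in cohomological degree $1$, corresponding to Nagao sign $-$).

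Now combine the two-step presentations $\Psi_{r-1}^{-1}(\hat{\Gamma}_{Q_{r-1},W_{r-1},\bullet})\in\Add(\hat{\Gamma}_{Q,W}[-1])\star\Add(\hat{\Gamma}_{Q,W})=\pr(\hat{\Gamma}_{Q,W}[-1])$ guaranteed by induction with the Koszul presentation of $\iota(\tilde S)$ (Proposition \ref{Koszul_res}). A direct computation with the two triangles above shows: if $\tilde S\in F_{r-1}$, then $X_+$ admits a presentation in $\Add(\hat{\Gamma}_{Q,W}[-1])\star\Add(\hat{\Gamma}_{Q,W})$ while $X_-$ acquires an unavoidable contribution in $\Add(\hat{\Gamma}_{Q,W}[1])$ that no such two-step presentation can absorb; symmetrically if $\tilde S\in T_{r-1}[-1]$, then $X_-$ lies in $\pr(\hat{\Gamma}_{Q,W}[-1])$ and $X_+$ does not. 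Thus the sign $\epsilon_r$ is uniquely pinned down by Plamondon's condition and coincides with Nagao's prescription.

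The main obstacle is precisely this last cohomological bookkeeping — translating the torsion-pair condition of Nagao into the two-term silting filtration condition of Plamondon. The cleanest way to organize it is through the K\"onig--Yang bijection between silting objects of $\Perf(\hat{\Gamma}_{Q,W})$ in the interval $[\hat{\Gamma}_{Q,W}[-1],\hat{\Gamma}_{Q,W}]$ and torsion pairs in $\rmMod \hat{J}_{Q,W}$: under this bijection the Nagao torsion pair corresponds exactly to the silting object $\Psi_r^{-1}(\hat{\Gamma}_{Q_r,W_r})$, which yields existence, uniqueness, and agreement of the two sign sequences in one stroke.
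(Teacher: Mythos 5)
There is a genuine gap at the decisive step. Your argument correctly reduces everything to showing that the ``wrong'' candidate $X_{-\epsilon_r}$ does \emph{not} lie in $\pr_{\Perf(\hat{\Gamma}_{Q,W})}(\hat{\Gamma}_{Q,W}[-1])$, but at exactly that point you only assert that ``a direct computation shows'' it ``acquires an unavoidable contribution in $\Add(\hat{\Gamma}_{Q,W}[1])$ that no two-step presentation can absorb.'' This is not a proof: two-term presentations are highly non-unique, and the fact that the particular presentation obtained by splicing the Keller--Yang triangles with the Koszul resolution of $\iota(\tilde S)$ involves a summand in $\Add(\hat{\Gamma}_{Q,W}[1])$ does not preclude the existence of some other presentation with $E^0,E^1\in\Add(\hat{\Gamma}_{Q,W})$. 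To exclude membership in $\pr(\hat{\Gamma}_{Q,W}[-1])$ one needs an \emph{intrinsic} obstruction. The paper supplies it via Plamondon's characterization
$$\pr_{\Perf(\hat{\Gamma})}(\hat{\Gamma}[-1])=D(\hat{\Gamma})_{\leq 1}\cap {}^{\perp}D(\hat{\Gamma})_{\leq(-1)}\cap\Perf(\hat{\Gamma}),$$
combined with the observation (read off from the proof of Theorem \ref{Nagao}) that choosing the sign opposite to Nagao's at the first point of divergence forces $\Phi(i+1)^{-1}(S_{k_{i+1}})$ to lie in $(\rmMod \hat{J}_{Q,W})[-2]$ or in $(\rmMod \hat{J}_{Q,W})[1]$; since $\Hom(\Phi(i+1)^{-1}(\hat{\Gamma}_{Q_{i+1},W_{i+1}}),\Phi(i+1)^{-1}(S_{k_{i+1}}))\cong\Hom(\hat{\Gamma}_{Q_{i+1},W_{i+1}},S_{k_{i+1}})\neq 0$, this contradicts membership in $D_{\leq 1}\cap{}^{\perp}D_{\leq(-1)}$. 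Some argument of this type (a Hom-vanishing forced by the two-term condition, violated by the wrong sign) is what your ``cohomological bookkeeping'' must actually produce, and it is missing.

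The closing appeal to a K\"onig--Yang-type bijection between two-term silting objects and torsion pairs does not repair this: that bijection is established for finite-dimensional algebras (or non-positive dg algebras with suitable finiteness), whereas here $\hat{J}_{Q,W}$ is a complete Jacobi algebra that need not be finite-dimensional, so its applicability would itself require proof; and even granting it, identifying the Nagao torsion pair with the silting object $\Phi(r)^{-1}(\hat{\Gamma}_{Q_r,W_r})$ is essentially the statement being proved, not an input one can quote. Also note that uniqueness need not be run as a separate induction: once you have the intrinsic characterization above, comparing an arbitrary Plamondon sequence with the (already unique) Nagao sequence at their first divergence, as the paper does, settles uniqueness and agreement simultaneously.
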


\begin{proof}For convenience, put $$\hat{\Gamma}:=\hat{\Gamma}_{Q,W},\quad\hat{\Gamma}(i):=\hat{\Gamma}_{Q_i,W_i},\quad (Q_i,W_i)=\mu_{k_i}(\dots(\mu_{k_1}(Q,W))\dots),\quad 1\leq i\leq r.$$

The statement of the lemma essentially follows from the fact that
\begin{equation}\label{descr_of_pr}\pr_{\Perf(\hat{\Gamma})}(\hat{\Gamma}[-1])=D(\hat{\Gamma})_{\leq 1}\cap {}^{\perp}D(\hat{\Gamma})_{\leq(-1)}\cap\Perf(\hat{\Gamma}),\end{equation}
which is proved in \cite{P1}.

Now, let $\epsilon_1,\dots,\epsilon_r$ be some sequence of signs from Theorem \ref{Plamondon}, and
$\epsilon_1',\dots,\epsilon_r'$ the unique sequence from Theorem \ref{Nagao}. Suppose that for some $0\leq i<r$ we have
$$\epsilon_j=\epsilon_j',\,1\leq j\leq i,\quad \epsilon_{i+1}\ne\epsilon_{i+1}'.$$
Put $$\Phi(i+1):=\Phi_{k_{i+1},\epsilon_{i+1}}\dots\Phi_{k_1,\epsilon_1}.$$
It follows from the proof of Theorem \ref{Nagao} that one of the following holds:

(1) $\Phi(i+1)^{-1}(S_{k_{i+1}})\in (\rmMod \hat{J}_{Q,W})[-2];$

(2) $\Phi(i+1)^{-1}(S_{k_{i+1}})\in (\rmMod \hat{J}_{Q,W})[1].$

But we have that
$$\Hom_{D(\hat{\Gamma})}(\Phi(i+1)^{-1}(\hat{\Gamma}(i+1)),\Phi(i+1)^{-1}(S_{k_{i+1}}))=\Hom_{D(\hat{\Gamma}(i+1))}(\hat{\Gamma}(i+1),S_{k_{i+1}})\ne 0,$$
which contradicts to $\Phi(i+1)^{-1}(\hat{\Gamma}(i+1))\in\pr_{\Perf(\hat{\Gamma})}(\hat{\Gamma}[-1]),$ by \eqref{descr_of_pr}.

Hence, we have $\epsilon_j=\epsilon_j',$ $1\leq j\leq r.$
\end{proof}

Taking the signs $\epsilon_i$ as in Theorem \ref{Nagao}, put
 $$\Phi(r):=\Phi_{k_{r},\epsilon_{r}}\dots\Phi_{k_1,\epsilon_1}.$$ Combining Lemma \ref{Nagao_Plamondon} and the results of \cite{P2}, we get the following Corollary.

\begin{cor}\label{fin_dim_of_H^1}In the assumptions of Theorem \ref{Nagao}, each representation
$H^1(\Phi(r)^{-1}(\hat{\Gamma}_{Q_r,W_r,j})),$ $j\in V(Q),$ is either zero, or the decorated representation $(H^1(\Phi(r)^{-1}(\hat{\Gamma}_{Q_r,W_r,j})),0)$
of the QP $(Q,W)$ is obtained by the inverse sequence of mutations from the trivial decorated representation $(0,e_jR_{Q_r})$ of the QP $(Q_r,W_r).$ In particular, we have
$$\dim H^1(\Phi(r)^{-1}(\hat{\Gamma}))<\infty.$$\end{cor}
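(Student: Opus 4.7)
The plan is to invoke Plamondon's bijection \cite{P2} between rigid objects of $\pr_{\Perf(\hat{\Gamma})}(\hat{\Gamma}[-1])$ and decorated representations of $(Q,W)$, together with its compatibility with the mutation equivalences $\Phi_{k,\pm}$ of Theorem \ref{Keller_Yang}.

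First, combining Theorem \ref{Plamondon} with Lemma \ref{Nagao_Plamondon}, we know that $T_j := \Phi(r)^{-1}(\hat{\Gamma}_{Q_r,W_r,j})$ lies in $\pr_{\Perf(\hat{\Gamma})}(\hat{\Gamma}[-1])$, so by \eqref{descr_of_pr} its cohomology is concentrated in degrees $0$ and $1$. To each rigid object $T \in \pr_{\Perf(\hat{\Gamma})}(\hat{\Gamma}[-1])$, Plamondon \cite{P2} attaches a decorated representation $(M(T), V(T))$ of $(Q,W)$, in such a way that $M(T)$ is naturally identified with $H^1(T)$. For the indecomposable summands $\hat{\Gamma}_{Q,W,j}$ of the initial cluster-tilting object one has $H^1=0$ and the associated decorated representation is the trivial one $(0, e_j R_Q)$.

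Second, the key compatibility established in \cite{P2} is that each equivalence $\Phi_{k,\pm}$ intertwines Plamondon's bijection with the DWZ mutation $\mu_k$ of decorated representations described in Subsection \ref{ss:DRM}. Applying this inductively along the sequence $(k_1, \ldots, k_r)$ with the signs from Theorem \ref{Nagao}, the decorated representation attached to $T_j$ on the $(Q,W)$ side is the inverse mutation of the decorated representation attached to $\hat{\Gamma}_{Q_r,W_r,j}$ on the $(Q_r,W_r)$ side, namely $(0, e_j R_{Q_r})$. When $H^1(T_j) \ne 0$ this decorated representation has the form $(H^1(T_j), 0)$, since Plamondon's $V$-component is non-zero only on summands that are shifts of indecomposable summands of $\hat{\Gamma}$, which happens precisely when $H^1$ vanishes.

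Finite-dimensionality of $H^1(\Phi(r)^{-1}(\hat{\Gamma}))$ is then automatic: the starting decorated representation $(0, e_j R_{Q_r})$ has zero-dimensional $M$-part, and each mutation from Subsection \ref{ss:DRM} is a finite linear-algebra construction applied to finite-dimensional data, so it preserves finite-dimensionality at every step. The main technical obstacle is extracting the precise statements from \cite{P2}---in particular the mutation-compatibility of Plamondon's bijection and the identification $M(T) \cong H^1(T)$---after which the corollary follows by a straightforward induction on the length of $\un{k}$.
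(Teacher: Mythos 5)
Your argument is correct and is essentially the paper's own: the corollary is stated there as a direct consequence of Lemma \ref{Nagao_Plamondon} together with the results of \cite{P2}, which are exactly the ingredients you invoke (Plamondon's correspondence between objects of $\pr_{\Perf(\hat{\Gamma})}(\hat{\Gamma}[-1])$ and decorated representations, the identification of the module part with $H^1$, and its compatibility with the mutation equivalences $\Phi_{k,\pm}$). You merely spell out the induction along $\un{k}$ and the finite-dimensionality bookkeeping, which the paper leaves implicit.
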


\subsection{$3$CY $A_{\infty}$-categories}
\label{ss:3CY}

We refer to \cite{KS08}, \cite{Kaj}, \cite{C}, \cite{CL1}, \cite{CL2} for cyclic $A_{\infty}$-categories and potentials.

First, we recall the definition of an $A_{\infty}$-category with scalar product. Below we use Quillen notation $\pm$ meaning that the signs come from Koszul sign rule.

\begin{defi}A $\C$-linear $A_{\infty}$-category $\cC$ with finite-dimensional graded spaces of morphisms is called an $A_{\infty}$-category with scalar product
of degree $d$ if
there are fixed perfect pairings $$\langle,\rangle:\Hom^i(X,Y)\otimes\Hom^{d-i}(Y,X)\to\C,$$
which are super-symmetric, and
$$\langle m_n(\alpha_0,\dots,\alpha_{n-1}),a_n\rangle=\pm \langle m_n(a_1,\dots,a_n),a_0\rangle$$
for homogeneous $\alpha_0,\dots,\alpha_n$ with
$$\deg(\alpha_0)+\dots+\deg(\alpha_n)=n+d-2.$$
We call such $A_{\infty}$-categories $d$-dimensional Calabi-Yau (or just $d$CY, or cyclic).

Further, an $A_{\infty}$-functor $F:\cC_1\to\cC_2$ between $d$CY $A_{\infty}$-categories is said to be compatible with $d$CY structure (or just cyclic)
if
$$\langle f_1(\alpha),f_1(\beta)\rangle=\langle \alpha,\beta\rangle,\quad \deg(\alpha)+\deg(\beta)=d;$$
$$\sum\limits_{i=0}^{n-1} \langle f_{i+1}(\alpha_0,\dots,\alpha_i), f_{n-i}(\alpha_{i+1},\dots,\alpha_n),\rangle$$
for homogeneous $\alpha_0,\dots,\alpha_n,$ $n\geq 2.$
\end{defi}

Recall the definition of the $A_{\infty}$-category $Tw\, \cC$ for an $A_{\infty}$-category $\cC.$
First, take the $A_{\infty}$-category $\cC'$ obtained from $\cC$ by adding all the shifts.
Now, let $S$ be some sequence of objects $(X_1,\dots,X_n)$ in $\cC'.$ We have
the $A_{\infty}$-algebra
$$\End_+(S)=\bigoplus\limits_{1\leq i,j\leq n}\Hom(X_i,X_j),$$
and its nilpotent $A_{\infty}$-subalgebra
$$\End_+(S)=\bigoplus\limits_{1\leq i<j\leq n}\Hom(X_i,X_j).$$

Then the twisted complex is a pair $(S,\alpha),$ where $\alpha\in\End_+(S)^1$ is the solution of the MC equation
$$\sum\limits_{n=1}^{\infty}(-1)^{\frac{n(n+1)}{2}}m_n(\alpha,\dots,\alpha)=0.$$
For a pair $((S_1,\alpha_1),(S_2,\alpha_2))$ of objects in $Tw\,\cC$
put
\begin{equation}\Hom((S_1,\alpha_1),(S_2,\alpha_2))=\bigoplus\limits_{X\in S_1,Y\in S_2}\Hom(X,Y).\end{equation}
For a sequence,
$((S_0,\alpha_0),\dots,(S_n,\alpha_n))$ of objects in
$Tw\,\cC$ and homogeneous morphisms $x_i\in
\Hom((S_{i-1},\alpha_{i-1}),(S_i,\alpha_i))$ we put
\begin{equation}m_n(x_n,\dots,x_1)=\sum\limits_{k_0,\dots,k_n\geq 0}(-1)^{\epsilon}
m_{n+k_0+\dots+k_n}(\alpha_n^{k_n},x_n,\alpha_{n-1}^{k_{n-1}},\dots,x_1,\alpha_0^{k_0}),\end{equation}
where $\epsilon=\sum\limits_{n\geq i>j\geq
0}(\deg(x_i)+k_i)k_j+\sum\limits_{i=0}^n
\frac{k_i(k_i+1)}2+\sum\limits_{i=1}^n ik_i.$

If $\cC$ is a $d$CY $A_{\infty}$-category, then so is $Tw\,\cC,$ with the obvious scalar product.

Let us take any formal QP $(Q,W).$ Then one can associate to it a $3$CY $A_{\infty}$-category $\cC_{Q,W}$
with $Ob(\cC_{Q,W})=V(Q),$ such that we have
$$\Ho(Tw\, \cC_{Q,W})\cong D^b(\hat{\Gamma}_{Q,W}).$$

Namely, we denote the objects of $\cC_{Q,W}$ simply by $S_i,$ $i\in V(Q),$ and put
$$\Hom^0(S_i,S_i)=\Hom^3(S_i,S_i)=\C,\quad\Hom^1(S_i,S_j)=\C^{a_{ji}}=\Hom^2(S_j,S_i),\quad i,j\in V(Q);$$
$$\Hom^{<1}(S_i,S_j)=0=\Hom^{>2}(S_i,S_j),\quad i\ne j,\quad\Hom^{<0}(S_i,S_i)=0=\Hom^{>3}(S_i,S_i).$$

The higher products and the pairing are the following. First, units $1\in\C=\Hom^0(S_i,S_i)$ are strict identity morphisms. Second,
the products $$m_2:\Hom^1(S_i,S_j)\otimes\Hom^2(S_j,S_i)\to\Hom^3(S_j,S_j)=\C,$$ $$m_2:\Hom^2(S_i,S_j)\otimes\Hom^1(S_j,S_i)\to\Hom^3(S_j,S_j)=\C$$
are just standard perfect pairings. They define the pairing $\langle,\rangle$ of degree $3$ on $\cC_{Q,W}.$ Further,
the higher products
$$m_n:\Hom^1(S_{i_{n-1}},S_{i_n})\otimes\dots\otimes\Hom^1(S_{i_0},S_{i_1})\to\Hom^2(S_{i_0},S_{i_n})$$
are given by the elements of the spaces
$$\Hom^1(S_{i_n},S_{i_0})^{\vee}\otimes\Hom^1(S_{i_{n-1}},S_{i_n})^{\vee}\otimes\dots\otimes\Hom^1(S_{i_0},S_{i_1})^{\vee},$$
which are the components of the potential $W.$ All the other higher products are zero.

The following is well-known, see \cite{KS08} and \cite{KY}

\begin{prop}\label{3CYlifting}We have natural equivalences of triangulated categories
$$\Ho(Tw\, \cC_{Q,W})\cong\Perf(\cC_{Q,W})\cong D^b(\hat{\Gamma}_{Q,W}),\quad D^b(\cC_{Q,W})\cong \Perf(\hat{\Gamma}_{Q,W}).$$
The equivalences $\Phi_{k,\pm}$ of Theorem \ref{Keller_Yang} are induced by $A_{\infty}$-functors
$$\phi_{k,\pm}:\cC_{Q,W}\to Tw\,\cC_{\mu_k(Q,W)},$$
compatible with the $3$CY structures.\end{prop}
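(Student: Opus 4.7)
The proof has two parts. For part (1), my plan is to realize $\cC_{Q,W}$ as the minimal $A_\infty$-model for the Ext-algebra of the simples $S_i$ over $\hat{\Gamma}_{Q,W}$. Using the Koszul resolution of Proposition \ref{Koszul_res}, I first compute the bigraded vector space $\bigoplus_{i,j} \Ext^\bullet_{\hat{\Gamma}_{Q,W}}(S_i,S_j)$ and check that it matches the definition of $\cC_{Q,W}$: we get $\Ext^0(S_i,S_i)=\C=\Ext^3(S_i,S_i)$, $\Ext^1(S_i,S_j)=\C^{a_{ji}}$, and $\Ext^2(S_j,S_i)=\C^{a_{ji}}$, with the Serre pairing providing the perfect duality of degree $3$. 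Then Kadeishvili--Merkulov homological perturbation transfers the honest DG-algebra structure on $\bigoplus_{i,j}\Hom^\bullet_{\hat{\Gamma}_{Q,W}}(P_i,P_j)$ (where $P_i$ is the Koszul resolution of $S_i$) to an $A_\infty$-structure on the Ext-algebra. A direct computation, standard in the literature on deformed Calabi--Yau completions (Keller), shows that the higher multiplications $m_n$ on classes of arrows are read off exactly from the coefficients of $W$ via cyclic contractions, matching the definition of $\cC_{Q,W}$. The $3$CY scalar product on $\cC_{Q,W}$ comes from the fact that the Ginzburg DG algebra is bimodule $3$-Calabi--Yau, which transfers to a cyclically symmetric pairing on the minimal model.

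Once $\cC_{Q,W}$ is identified with this minimal model, the three equivalences of the first half follow from general $A_\infty$-Koszul duality. Since the simples $S_i$ generate $D^b(\hat{\Gamma}_{Q,W})$ by finite iterated extensions (every object has finite-dimensional cohomology, hence a finite composition series as an $H^0(\hat{\Gamma}_{Q,W})$-module), the pretriangulated hull $\Ho(Tw\,\cC_{Q,W})$ is equivalent to $D^b(\hat{\Gamma}_{Q,W})$, and this agrees with $\Perf(\cC_{Q,W})$ because each $\Hom^\bullet_{\cC_{Q,W}}(S_i,S_j)$ is finite-dimensional and the category is already idempotent complete at the level of twisted complexes. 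The complementary duality $D^b(\cC_{Q,W})\cong\Perf(\hat{\Gamma}_{Q,W})$ is the standard Koszul-dual exchange between ``perfect on one side'' and ``bounded-with-finite-cohomology on the other'' for a complete augmented algebra over the semisimple base ring $R_Q$.

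For part (2), I would construct the cyclic $A_\infty$-functors $\phi_{k,\pm}$ from the data of Theorem \ref{Keller_Yang}. The equivalence $\Phi_{k,\pm}$ sends each simple $S_i\in D^b(\hat{\Gamma}_{Q,W})$ to an object of $D^b(\hat{\Gamma}_{\mu_k(Q,W)})$ which, via part (1), is represented by an explicit twisted complex over $\cC_{\mu_k(Q,W)}$: essentially $S_i'$ for $i\ne k$, and for $i=k$ a cone involving $S_k'$ and its neighbors prescribed by the exact triangles of Theorem \ref{Keller_Yang}. Since $\cC_{Q,W}$ is the full $A_\infty$-subcategory of $Tw\,\cC_{Q,W}$ on the objects $\{S_i\}$, this assignment at the level of objects extends to a (homotopically well-defined) $A_\infty$-functor $\phi_{k,\pm}:\cC_{Q,W}\to Tw\,\cC_{\mu_k(Q,W)}$ inducing $\Phi_{k,\pm}$. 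The higher Taylor components are determined by the universal property of transferred $A_\infty$-structures, together with the explicit description of the mutation at the level of DG modules.

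The main obstacle, and the hard part, is showing that $\phi_{k,\pm}$ can be chosen to respect the $3$CY scalar product. My plan is to exploit the rigidity of cyclic structures on minimal $3$CY $A_\infty$-categories generated by simples, in the spirit of Kontsevich--Soibelman, Costello, and Kajiura: since $\Phi_{k,\pm}$ is an equivalence of triangulated categories intertwining the Serre functors of degree $3$ on $D^b$, a standard obstruction argument (essentially Kajiura's theorem on uniqueness of cyclic minimal models up to cyclic equivalence) produces a cyclic lift after at most replacing $\phi_{k,\pm}$ by a cyclically $A_\infty$-equivalent functor. An alternative and more direct route is to observe that the Derksen--Weyman--Zelevinsky mutation of QPs manipulates the potential itself (not merely the underlying algebra), so that cyclicity is preserved by construction under $\mu_k^{pre}$; the reduction $(\cdot)_{red}$ then corresponds to splitting off a contractible cyclic summand, which is standard. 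Either route gives the required cyclic $A_\infty$-functor.
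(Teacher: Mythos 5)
The paper offers no proof of this proposition: it is quoted as well-known, with pointers to \cite{KS08} and \cite{KY}, so there is no in-text argument to compare yours against line by line. Judged on its own, your outline follows essentially the path of those references and is viable, with a few soft spots in the first half. Identifying $\cC_{Q,W}$ with the cyclic minimal model of $\Ext^{\bullet}_{\hat{\Gamma}_{Q,W}}(\bigoplus_i S_i,\bigoplus_i S_i)$ and invoking Koszul duality is the right mechanism, but your claim that $\Ho(Tw\,\cC_{Q,W})$ is ``already idempotent complete at the level of twisted complexes'' is not automatic from finite-dimensionality of the Hom spaces; the clean order of argument is to prove $\Ho(Tw\,\cC_{Q,W})\simeq D^b(\hat{\Gamma}_{Q,W})$ first (here you should use that $\hat{J}_{Q,W}$ is complete, so every finite-dimensional $H^0$-module is an iterated extension of the vertex simples --- this fails for the uncompleted Jacobi algebra) and then transport idempotent completeness from $D^b(\hat{\Gamma}_{Q,W})$, which is closed under summands in $D(\hat{\Gamma}_{Q,W})$. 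Also, since $W$ may be formal, the homotopy transfer and the duality $D^b(\cC_{Q,W})\cong\Perf(\hat{\Gamma}_{Q,W})$ have to be run in the complete (pseudocompact) setting as in the appendix to \cite{KY}; this is harmless because each $m_n$ only involves the arity-$(n+1)$ part of $W$, but it should be said.

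For the second half, your route (a) is the genuine gap: Kajiura's theorem (Theorem \ref{Kajiura}) gives existence and uniqueness of the cyclic minimal model of a single $3$CY $A_{\infty}$-algebra up to cyclic automorphism; it does not by itself upgrade a given quasi-equivalence that merely intertwines Serre duality to a cyclic $A_{\infty}$-functor, and the ``standard obstruction argument'' you appeal to is not available in that form. Route (b) is the correct one and is what \cite{KS08} actually does: replace $S_k$ by the appropriate cone inside $Tw\,\cC_{Q,W}$ with its canonical cyclic structure, take the cyclic minimal model of the endomorphism $A_{\infty}$-category of the new collection via Kajiura's splitting, and compute that the resulting potential is the reduction of $\mu_k^{pre}W$, i.e.\ the DWZ mutation. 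What your sketch still omits, and what must be checked to get the statement as formulated, is that the equivalence induced by this cyclic functor agrees with the Keller--Yang $\Phi_{k,\pm}$ --- for instance by verifying that it sends $\Gamma_i\mapsto\Gamma'_i$ for $i\ne k$ and realizes the exact triangles of Theorem \ref{Keller_Yang}; this is also all that the later applications (Theorem \ref{torsion_pairs_via_Z} and Theorem \ref{DT_for_formal}) actually use.
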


If $A$ is a $3$CY algebra, then we have a formal power series on $A^1:$
$$W_A(\alpha)=\sum\limits_{n=1}^N\frac{\langle m_n(\alpha,\dots,\alpha),\alpha\rangle}{n+1}.$$
If $f:A\to B$ is an $A_{\infty}$-morphism compatible with $3$CY structures, then
we have formal map $\hat{f}:A\to B,$
$$\hat{f}(\alpha)=\sum\limits_{n\geq 1}f_n(\alpha,\dots,\alpha),$$
and we have
$$\hat{f}^*(W_B)=W_A.$$

We would like to point out the relation of this potential with the potentials on the moduli of quiver representations. Namely, let $(Q,W)$
be a polynomial QP
take the Calabi-Yau $A_{\infty}$-category $\cC_{Q,W}$ defined above, and take the twisted complex $S_{\gamma}:=\bigoplus\limits_{i\in V(Q)}S_i^{\oplus\gamma^i}.$ Then we have natural identification $$\End^1(S_{\gamma})\cong M_{\gamma},$$
and the potential $W_{\End(S_{\gamma})}$ corresponds to the potential $W_{\gamma}.$ More generally, if $W$ is a formal potential,
then we also have such an identification, but $W_{\gamma}$ is a function on the formal neighborhood of $Nilp_{\gamma}\subset M_{\gamma}.$
Moreover, if $\alpha\in\End^1(S_{\gamma})$ is an MC solution corresponding to the representation of $\hat{J}_{Q,W},$ then the potential
on $\End^1(S_{\gamma},\alpha)$ is just given by the formula $$W_{\alpha}(z)=W_{\gamma}(\alpha+z).$$

We will need the following result of \cite{Kaj}.

\begin{theo}\label{Kajiura}For any $3$CY $A_{\infty}$-algebra, there is a $3$CY $A_{\infty}$-isomorphism $$A\cong A_{min}\oplus A_{triv}$$
where $A_{min}$ is minimal (i.e. $m_1=0$) and $A_{triv}$ is trivial (i.e. $H^{\bullet}(A_{triv},m_1)=0,$ $m_{>1}=0$) $3$CY $A_{\infty}$-algebras.
Moreover, both $A_{min}$ and $A_{triv}$ are defined uniquely up to a cyclic $A_{\infty}$-automorphism.
In particular, we have that after a formal change of coordinates, $W_A$ has the form
$$W^{min}\oplus Q_A\oplus N_A,$$ where $W^{min}$ is the potential on $A_{min}^1\cong H^1(A,m_1),$ and $Q_A$ is the quadratic form on the space
$A^1/\ker(m_1)$ given by the formula
$$Q_A(\alpha,\alpha)=\frac12\langle m_1(\alpha),\alpha\rangle,$$
and $N_A$ is the zero function on $\im(m_1:A^0\to A^1).$\end{theo}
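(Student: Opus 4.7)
The plan is to prove this as a cyclic refinement of Kadeishvili's minimal model theorem, with the scalar product used throughout to keep all choices compatible with the Calabi-Yau structure. The statement about $W_A$ is then just the dictionary between cyclic $A_\infty$-structures and formal potentials.

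First, I would use the scalar product to produce a canonical Hodge-type decomposition. Since $\langle,\rangle$ is nondegenerate of degree $3$ and $m_1$ is self-adjoint (up to sign) with respect to it (this follows from the cyclic identity for $n=1$), the subspace $\im(m_1)\subset\ker(m_1)$ is paired trivially with $\ker(m_1)$. Hence $\ker(m_1)/\im(m_1)=H^\bullet(A,m_1)$ inherits a nondegenerate pairing, and one can choose a splitting $A=H\oplus\im(m_1)\oplus C$ (with $H\cong H^\bullet(A,m_1)$ and $C$ a complement of $\ker(m_1)$) such that $H$ is orthogonal to $\im(m_1)\oplus C$, while $\im(m_1)$ and $C$ are paired nondegenerately with each other. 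With this splitting one constructs a contracting homotopy $h:A\to A$ of degree $-1$ satisfying $m_1 h+h m_1=\id-p$ (where $p$ is the projection onto $H$), $h^2=0$, $ph=hp=0$, and crucially $h$ \emph{self-adjoint} with respect to $\langle,\rangle$. Self-adjointness is the core technical point: it is arranged by declaring $h$ to be the composition $\im(m_1)\xrightarrow{\sim}C\hookrightarrow A$ on $\im(m_1)$ (using the duality from the pairing) and zero on $H\oplus C$.

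Next, I would run the Kontsevich--Soibelman sum-over-trees formula for homological perturbation to transfer the $A_\infty$-structure from $A$ to $H$, producing the higher products $m_n^{min}$ on $H$ together with an explicit $A_\infty$-quasi-isomorphism $i:H\hookrightarrow A$ built from the same trees. The standard check --- inserting $\langle m_n^{min}(\alpha_0,\dots,\alpha_{n-1}),\alpha_n\rangle$ and shifting the marked leaf around the tree --- shows that cyclic symmetry of the transferred operations reduces to the identity $\langle h(x),y\rangle=\pm\langle x,h(y)\rangle$ on interior edges. This is precisely self-adjointness of $h$, so $(H,m_n^{min})$ is a $3$CY $A_\infty$-structure and $i$ is cyclic. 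Setting $A_{min}:=(H,m_n^{min})$ and $A_{triv}:=(\im(m_1)\oplus C,m_1)$ with its induced pairing, one extends $i$ to a cyclic $A_\infty$-isomorphism $A_{min}\oplus A_{triv}\xrightarrow{\sim} A$ by sending the trivial summand identically. This gives the direct sum decomposition.

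For uniqueness, I would argue by obstruction theory. Two minimal cyclic models $A_{min}, A_{min}'$ are quasi-isomorphic as ordinary $A_\infty$-algebras by Kadeishvili, and one lifts this order by order to a cyclic $A_\infty$-isomorphism: the obstruction to cyclicity at each order lives in a cyclic cohomology group which vanishes for minimal $3$CY algebras by a standard Hochschild-cyclic dualization argument (using that all operations can be symmetrized against the pairing). The trivial summand is unique up to cyclic isomorphism because its pairing is determined by $m_1$ via $\langle m_1\alpha,\beta\rangle$.

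Finally, to read off the statement about $W_A$: recall that a cyclic $A_\infty$-structure on $A$ is the same data as a formal function $W_A\in\widehat{\Sym}^{\geq 3}(A^1)^*$ (extended to include the quadratic $m_1$-term $Q_A(\alpha,\alpha)=\tfrac12\langle m_1\alpha,\alpha\rangle$), and cyclic $A_\infty$-isomorphisms correspond to formal changes of coordinates on $A^1$ preserving the symplectic/scalar-product structure. The decomposition $A\cong A_{min}\oplus A_{triv}$ therefore becomes the decomposition $W_A = W^{min}\oplus Q_A\oplus N_A$ after a formal change of coordinates, with $W^{min}$ the potential of the minimal model on $H^1(A,m_1)$, $Q_A$ the nondegenerate quadratic form coming from $m_1$ on $A^1/\ker(m_1)\cong C^1$, and $N_A\equiv 0$ on $\im(m_1:A^0\to A^1)$ (the direction along which $W_A$ is literally constant, since these are the ``gauge'' tangent directions). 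The main obstacle in the whole argument is really the first paragraph: arranging a contracting homotopy that is simultaneously a contraction and self-adjoint, and then verifying that this single property is enough to make every transferred cyclic identity hold.
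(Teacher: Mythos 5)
You should first note that the paper does not prove Theorem~\ref{Kajiura} at all: it is imported verbatim from Kajiura \cite{Kaj} (see also \cite{KS08} and \cite{CL2}), so the only meaningful comparison is with those references. Your existence argument is essentially their strategy (an orthogonal Hodge-type splitting, a compatible contracting homotopy, sum-over-trees transfer, then the dictionary between cyclic structures and potentials), and the tree-rotation reduction of cyclicity of the transferred products to self-adjointness of $h$ is sound. But there is a technical slip in the construction of $h$: with $h$ defined to vanish on $H\oplus C$ and to be $(m_1|_C)^{-1}$ on $\im(m_1)$, the identity $\langle h x,y\rangle=\pm\langle x,hy\rangle$ applied to $x\in\im(m_1)$, $y\in C$ forces $\langle C,C\rangle=0$; so the splitting conditions you impose (only that $H\perp(\im(m_1)\oplus C)$ and that $\im(m_1)$ and $C$ pair nondegenerately) do not yet give a self-adjoint $h$. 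You must in addition choose $C$ \emph{isotropic}; this is possible because $\im(m_1)$ is isotropic inside the nondegenerate space $H^{\perp}$ and the degree-$3$ pairing couples different graded pieces, but it has to be said, since the whole rotation argument (and the vanishing $\langle h\,\cdot,h\,\cdot\rangle=0$ needed when you assemble the map on $A_{min}\oplus A_{triv}$) hinges on it. Relatedly, you assert rather than check that the extension of $i$ to $A_{min}\oplus A_{triv}\to A$ is a \emph{cyclic} morphism: cyclicity of a functor involves not only that the linear term is an isometry but also the quadratic conditions on pairs of Taylor components, and these require the same identities on $h$.

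The genuine gap is the uniqueness clause. Saying that ``the obstruction to cyclicity at each order lives in a cyclic cohomology group which vanishes for minimal $3$CY algebras'' is a gesture, not an argument: no such vanishing is identified, and the naive way to compare two minimal models --- compose one decomposition with the projection onto the minimal summand of the other --- fails because that projection kills $A_{triv}$ and hence is not an isometry, so it is not a cyclic morphism in the sense used here. The actual proofs (Kajiura \cite{Kaj}, Kontsevich--Soibelman \cite{KS08}, Cho--Lee \cite{CL2}) establish uniqueness by an explicit order-by-order, Darboux-type normalization of the quasi-isomorphism, and this part of the statement is not decorative: it is exactly what the paper uses in the Appendix (Proposition~\ref{well-defined_class}), where $W^{min}_{\alpha}$ must depend only on the isomorphism class of $\alpha$. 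As it stands, your proposal proves (after the isotropy fix) the existence of one cyclic decomposition and correctly reads off the splitting $W^{min}\oplus Q_A\oplus N_A$, but it does not prove the theorem as stated.
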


\subsection{DT theory for a quiver with formal potential}
\label{ss:DTQFP}

Take some formal QP $(Q,W).$ In general (if $W$ is not polynomial) we do not have
regular functions $W_{\gamma}$ on $M_{\gamma}.$ However, we have well-defined functions $W_{\gamma}$ on the formal neighborhoods
 of $Nilp_{\gamma}\subset M_{\gamma}.$ Suppose that we have some central charge $Z$ on $Q.$
Then for any sector $V\subset\cH_+$ we have a well-defined $G_{\gamma}$-equivariant closed subset $M_{\gamma,V}^{sp}\subset M_{\gamma,V},$ which
parameterizes representations of $\hat{J}_{Q,W}.$ Define the extension-closed subcategory $\cC_V\subset\rmmod\hat{J_{Q,W}}\subset D^b(\hat{\Gamma}_{Q,W})$
as a full subcategory consisting of isomorphism classes of representations in $M_{\gamma,V}^{sp},$ $\gamma\in\Z_{\geq 0}^{V(Q)}.$

Note that in general we do not have well-developed DT theory for a quiver with formal potential (see the discussion in \cite{KS}, Subsection 7.1).  However,
in Appendix we will prove the following result.

\begin{theo}\label{DT_for_formal}Suppose that for some formal QP $(Q,W)$ and polynomial QP $(Q',W')$ we have a cyclic $A_{\infty}$-functor
$\phi:\cC_{Q,W}\to Tw\,\cC_{Q',W'},$ inducing an equivalence
$\Phi:D^b(\hat{\Gamma}_{Q,W})\stackrel{\sim}{\to} D^b(\hat{\Gamma}_{Q',W'}).$ Let $Z$ be a central charge on $Q.$ Then one can define the classes
$$[D(H^{\bullet,crit}_{c,G_{\gamma}}(M_{\gamma,V})^{sp},W_{\gamma})]\in\hat{R},$$
for all sectors $V\subset \cH_+,$ and $\gamma\in\Z_{\geq 0}^{V(Q)},$ such that the following holds.

1) Suppose that there is only one $G_{\gamma}$-orbit in $M_{\gamma,V}^{sp},$ and for the corresponding representation $E$
of $\hat{J}_{Q,W}$ we have $$\Ext^1(E,E)=0,\quad \chi_Q(\gamma,\gamma)\equiv \dim\Ext^0(E,E)\text{ mod }2.$$
Then we have
\begin{equation}\label{crit_cohom_formal}[D(H^{\bullet,crit}_{c,G_{\gamma}}(M_{\gamma,V})^{sp},W_{\gamma})]=[H^{\bullet}(\mrB\Aut(E))]\cdot T^{\dim_{\C}\Aut(E)}.\end{equation}

2) Define the DT series $A_V$ using the classes $[D(H^{\bullet,crit}_{c,G_{\gamma}}(M_{\gamma,V})^{sp},W_{\gamma})]\in\hat{R},$
as in the formula \eqref{formula_for_A_V}.

Suppose that we have a central charge $Z'$ on $Q',$ and for some sectors $V,V'\subset\cH_+$ we have that
$$\Phi(\cC_V)=\cC_{V'},\quad \cC_V\subset D^b(\hat{\Gamma}_{Q,W}),\,\cC_{V'}\subset D^b(\hat{\Gamma}_{Q',W'}),$$ and assume that
\begin{equation}\label{parity_preserved}\chi_{Q'}([\Phi](\gamma),[\Phi](\gamma))\equiv \chi_Q(\gamma,\gamma)\text{ mod }2,\quad\gamma\in\Z^{V(Q)}.\end{equation}
Then we have
$$A_{V'}=[\Phi](A_V),$$
where in the last formula $[\Phi]$ denotes the induced map on completions of motivic quantum tori.

3) If the sector $V$ is the disjoint union of two sectors $V_1\sqcup V_2$ (in the clockwise order), then we have factorization:
$$A_V=A_{V_1}A_{V_2}.$$
\end{theo}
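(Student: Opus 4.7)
The plan is to define the critical cohomology classes for the formal QP $(Q,W)$ by transferring via the cyclic $A_\infty$-functor $\phi$ to the polynomial QP $(Q',W')$, where the full Donaldson--Thomas theory of Kontsevich--Soibelman is available. The essential input is Kajiura's theorem (Theorem \ref{Kajiura}), which gives a formal Morse-type normal form $W_A\cong W^{min}\oplus Q_A\oplus N_A$ for potentials on $3$CY $A_\infty$-algebras, together with the Thom--Sebastiani property of vanishing cycles in $MMHS$.

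First, I would work formally around each nilpotent representation $E\in M_{\gamma,V}^{sp}$. The completed endomorphism algebra $\End(E)$ in $Tw\,\cC_{Q,W}$ is a $3$CY $A_\infty$-algebra whose associated formal potential on $\End^1(E)$ coincides with the formal germ of $W_\gamma$ at $E$, since $\hat J_{Q,W}$-modules correspond to Maurer--Cartan elements. The cyclic $A_\infty$-functor $\phi$ induces a cyclic $A_\infty$-morphism $\End(E)\to\End(\phi(E))$ with $\phi(E)\in Tw\,\cC_{Q',W'}$, and the identity $\widehat{\phi}^*(W_{\phi(E)})=W_E$ together with Kajiura's theorem shows that the germs $(W_\gamma)_E$ and $(W'_{\gamma'})_{\phi(E)}$ become formally equivalent after stabilization by non-degenerate quadratic forms. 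By Thom--Sebastiani the local contributions to critical cohomology therefore differ only by controlled Tate twists, and this allows me to \emph{define} the class $[D(H^{\bullet,crit}_{c,G_\gamma}(M_{\gamma,V}^{sp},W_\gamma))]\in\hat R$ to be the corresponding class on the $(Q',W')$-side, corrected by these twists.

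Next, I would globalize by stratifying $M_{\gamma,V}^{sp}$ by $G_\gamma$-orbits and patching the local identifications; the uniqueness clause in Kajiura's theorem (up to cyclic $A_\infty$-automorphism) ensures the local models glue to a well-defined class, and convergence in $\hat R$ is automatic because each quotient $F^pR/F^{p+1}R$ receives contributions from only finitely many strata of bounded dimension. Part (1) then reduces to the classical computation of equivariant vanishing cycles at a non-degenerate critical point with stabilizer $\Aut(E)$, where the parity hypothesis $\chi_Q(\gamma,\gamma)\equiv\dim\Aut(E)\bmod 2$ makes the half-integer powers of $T$ disappear. Part (2) is essentially built into the construction: both sides of $A_{V'}=[\Phi](A_V)$ are computed from the same $(Q',W')$ data, and the assumption \eqref{parity_preserved} ensures that the two rescalings by $T^{-\chi(\gamma,\gamma)/2}$ and $T^{-\chi_{Q'}([\Phi]\gamma,[\Phi]\gamma)/2}$ differ by an integer power of $T$. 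Part (3) is then inherited from the Kontsevich--Soibelman factorization (Theorem \ref{factorization_polynomial}) applied on the $(Q',W')$ side, combined with Part (2).

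The main difficulty is the patching step: one must verify that the Thom--Sebastiani stabilization supplied by Kajiura's theorem can be carried out uniformly in families over each $G_\gamma$-orbit, with the trivial summand $N_A$ lying in the image of $m_1:\End^0(E)\to\End^1(E)$ so that it corresponds to genuine infinitesimal automorphisms and contributes only a predictable Tate twist to the equivariant cohomology. This is precisely where the parity hypothesis \eqref{parity_preserved} is genuinely used, and the careful verification — which I understand to be the content of the Appendix — is what makes the transfer well-posed.
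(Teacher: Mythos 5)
Your overall strategy --- transfer to the polynomial side via the cyclic functor $\phi$ and control the local potentials by Theorem \ref{Kajiura} plus Thom--Sebastiani --- is the right one, but two of your three reductions have genuine gaps. First, the definition itself: you propose to define the class by formal-germ identifications at each $E\in M^{sp}_{\gamma,V}$ and then ``patch'' over $G_{\gamma}$-orbits using the uniqueness clause of Kajiura's theorem. That uniqueness is only up to a non-canonical cyclic $A_{\infty}$-automorphism, so the local normal forms do not glue to any global object, and critical cohomology of a positive-dimensional stratum is not a sum of pointwise germs. The paper avoids patching altogether: it sets $X_{\gamma}=\bigoplus_i\phi(S_i)^{\oplus\gamma^i}$, $\cA_{\gamma}=\End_{Tw\,\cC_{Q',W'}}(X_{\gamma})$, so that $W_{\cA_{\gamma}}$ is an honest polynomial function on the affine space $\cA_{\gamma}^1$, and defines the class by the formula \eqref{H^crit} on a constructible set of Maurer--Cartan elements, dividing stratumwise by automorphism classes; well-definedness (Proposition \ref{well-defined_class}) is then proved through the Denef--Loeser motivic Milnor fibre and the motivic Thom--Sebastiani theorem, showing the local contribution depends only on the homotopy class and varies constructibly. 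You explicitly defer exactly this verification (``which I understand to be the content of the Appendix''), so the construction step of your argument is incomplete. Relatedly, part (2) is not ``built into the construction'': the Kontsevich--Soibelman series $A_{V'}$ is defined from the genuine representation spaces of $Q'$, i.e.\ from Maurer--Cartan elements over $\bigoplus_i (S'_i)^{\oplus\gamma'^i}$, not over $X_{\gamma}$ built from the twisted complexes $\phi(S_i)$; equating the two computations is a real step (the identity \eqref{equality_of_crit_cohom} in the paper), again proved by the Milnor-fibre argument, and \eqref{parity_preserved} enters only there, to resolve the ambiguity between $T^{1/2}$ and the square root of the Tate class --- not in any patching.

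The more serious gap is part (3). You want to inherit factorization from Theorem \ref{factorization_polynomial} on the $(Q',W')$ side ``combined with Part (2)'', but part (3) carries no hypothesis relating $V_1,V_2$ to a central charge on $Q'$: for a general decomposition $V=V_1\sqcup V_2$ with respect to $Z$, the images $\Phi(\cC_{V_1})$, $\Phi(\cC_{V_2})$ are merely extension-closed subcategories of twisted complexes and are in general \emph{not} of the form $\cC_{V'_1},\cC_{V'_2}$ for any $Z'$, so the polynomial-side factorization theorem simply does not apply. The paper instead proves factorization directly at the level of Maurer--Cartan loci of the $3$CY algebra (Proposition \ref{factorization}), using the Kontsevich--Soibelman integral identity applied to a pair of subcategories satisfying the semiorthogonality conditions $\Hom^{\leq 0}(X_1,X_2)=0$, $\Hom^{<0}(X_2,X_1)=0$ that a sector decomposition provides. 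Without an argument of this kind, your part (3) does not follow.
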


\begin{remark}\label{parity_for_mutations}It is well-known (and is easy to check) that the assumption \eqref{parity_preserved}
holds for equivalences $\Phi$ coming from a single mutation, and hence from the sequence of mutations.\end{remark}

\subsection{Torsion pairs and stability}
\label{ss:TPStab}

If $\cH_+=V_-\sqcup V_+$ is the decomposition into the disjoint union of two sectors (in the clockwise order). Then we have a torsion pair $(\cC_{V_-},\cC_{V_+})$ in the category $\rmmod \hat{J}_{Q,W}.$ In the paper \cite{N} it is shown that the torsion pairs on $\rmmod \hat{J}_{Q,W}$ which appear in Theorem \ref{Nagao}, are actually obtained in this way.

Now let $\un{k}=(k_1,\dots,k_r)$ be a sequence of vertices, and $\epsilon_1,\dots,\epsilon_r$ a sequence of signs
from Theorem \ref{Nagao}. Again, put $$(Q_i,W_i)=\mu_{k_i}(\dots(\mu_{k_1}(Q,W))\dots),\quad  \Phi(i):=\Phi_{k_i,\epsilon_i}\circ\dots\Phi_{k_1,\epsilon_1},\quad 0\leq i\leq r.$$
and
$$S(i):=\begin{cases}\Phi(i-1)^{-1}(S_{k_i}) & \text{if }\Phi(i-1)^{-1}(S_{k_i})\in F_{i-1};\\
\Phi(i-1)^{-1}(S_{k_i})[1] & \text{if }\Phi(i-1)^{-1}(S_{k_i})\in T_{i-1}[-1].\end{cases}$$

By Theorem \ref{DT_for_formal} and Proposition \ref{3CYlifting}, if the potential $W_r$ is polynomial, then we have DT theory for
QP $(Q,W).$

\begin{theo}\label{torsion_pairs_via_Z}
1)With the above notation, there exist central charges $Z_i$ on $Q,$ and angles $0<\phi_i<\pi,$ such that $$\cC_{V_{>\phi_i}}=T_i\cap(\rmmod \hat{J}_{Q,W}),\quad \cC_{V_{<\phi_i}}=F_i\cap(\rmmod \hat{J}_{Q,W}).$$

2) Assume that the potential $W_r$ is polynomial. Then, we have
$$A^{Z_i}_{V_{>\phi_i}}=(-T^{\frac12}\hat{w}_{[S(1)]};T)_{\infty}^{\epsilon_1}\dots (-T^{\frac 12}\hat{w}_{[S(i)]};T)_{\infty}^{\epsilon_i},$$
where $(z;q)_{\infty}=\prod\limits_{n=0}^{\infty}(1-q^nz)$ is the $q$-Pochammer symbol.
\end{theo}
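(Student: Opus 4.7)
The plan is to prove part 1 following Nagao's inductive construction, then establish part 2 by induction on $i$ using the Kontsevich--Soibelman wall-crossing formula (the factorization theorem).

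For part 1, the central charges $Z_i$ and angles $\phi_i$ are constructed step by step. Start with a generic $Z_0$ and $\phi_0 < \pi$ placing all simples of $\hat{J}_{Q,W}$ in $V_{<\phi_0}$, so $T_0 = 0$ and the base case matches. At step $i$, perturb $(Z_{i-1}, \phi_{i-1})$ to $(Z_i, \phi_i)$ so that the single ray through $Z_i([S(i)])$ crosses the angle $\phi_i$ --- entering $V_{>\phi_i}$ when $\epsilon_i = +$, and leaving when $\epsilon_i = -$ --- while keeping all other semi-stable rays on their respective sides. The verification that the resulting slope-cut torsion pair coincides with $(T_i, F_i)$ is carried out in \cite{N}, relying on the inductive description of the tilting from the proof of Theorem \ref{Nagao}.

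For part 2, the hypotheses of Theorem \ref{DT_for_formal} are satisfied: $W_r$ is polynomial, so by Proposition \ref{3CYlifting} the sequence of mutations lifts to a cyclic $A_\infty$-functor $\cC_{Q,W} \to Tw\,\cC_{Q_r,W_r}$ inducing $\Phi(r)$, and the parity condition \eqref{parity_preserved} holds by Remark \ref{parity_for_mutations}. Proceed by induction on $i$. For the inductive step, pick a thin sector $V^{(i)} \subset \cH_+$ narrow enough to contain the ray of $Z_i([S(i)])$ and no other semi-stable ray, positioned on the appropriate side of $\phi_i$ according to $\epsilon_i$. Applying Theorem \ref{DT_for_formal}(3) to decompose the DT series for both $(Z_{i-1},\phi_{i-1})$ and $(Z_i,\phi_i)$ and comparing, the series $A^{Z_i}_{V_{>\phi_i}}$ and $A^{Z_{i-1}}_{V_{>\phi_{i-1}}}$ differ by exactly the factor $A^{Z_i}_{V^{(i)}}$ (with exponent $+1$ when $\epsilon_i = +$ and $-1$ when $\epsilon_i = -$, according to the direction of wall-crossing).

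The key computation of $A^{Z_i}_{V^{(i)}}$ uses Theorem \ref{DT_for_formal}(1): since $k_i$ is loop-free in $Q_{i-1}$, we have $\Ext^1(S(i), S(i)) = 0$, so the unique semi-stable representation of class $n[S(i)]$ is $S(i)^{\oplus n}$, a single $GL_n(\C)$-orbit. Summing $[H^\bullet(BGL_n)]\cdot T^{n^2}\hat{w}_{n[S(i)]}$ via the $q$-binomial identity yields the quantum dilogarithm $(-T^{1/2}\hat{w}_{[S(i)]};T)_\infty$, which combined with the sign of crossing produces the factor $(-T^{1/2}\hat{w}_{[S(i)]};T)_\infty^{\epsilon_i}$ appended on the right of the product. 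The main obstacle I anticipate is the careful bookkeeping of this sign and of the position of the new factor: one must ensure via the perturbation that $S(i)$ is the unique simple-like ray crossing the stability line between successive steps, and that its new ray is placed in the correct position relative to previously crossed rays so that the inductive product accumulates in the order $1,\dots,i$ dictated by the formula. A secondary technical point is that all factorizations are being applied in the formal-QP DT theory of Theorem \ref{DT_for_formal}, so one must invoke the $A_\infty$-lifting from Proposition \ref{3CYlifting} at each use of factorization.
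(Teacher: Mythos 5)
Your part 2) is essentially the paper's own argument: induct on $i$, arrange the stability data so that only the ray of $[S(i)]$ lies between the old and the new cut, compute the thin-sector series from Theorem \ref{DT_for_formal}(1) (a single orbit $S(i)^{\oplus n}$ for each $n$, using that $k_i$ is loop-free in $Q_{i-1}$, giving $\sum_n [H^\bullet(BGL(n))]T^{\frac{n^2}2}\hat{w}_{n[S(i)]}=(-T^{\frac12}\hat{w}_{[S(i)]};T)_\infty$ --- note the exponent is $T^{\frac{n^2}2}$, not $T^{n^2}$), and append it on the right with exponent $\epsilon_i$ via the factorization of Theorem \ref{DT_for_formal}(3). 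Two points that the paper makes explicit and you leave implicit: first, $A^{Z_i}_{V_{>\phi_i}}$ depends only on the subcategory $T_i\cap\rmmod\hat{J}_{Q,W}$ and not on the admissible pair $(Z_i,\phi_i)$; this invariance is exactly what licenses your ``decompose for both charges and compare'' step when the central charge itself moves between steps. Second, ``a thin sector containing no other semi-stable ray'' is not something you can get by controlling all rays (there are infinitely many classes); the paper instead re-chooses $(Z_{i-1},\phi_{i-1})$ so that the rotated argument of $[S(i)]$ is strictly extremal among the finitely many classes $[\Phi(i-1)^{-1}(S_j)]$, and then uses that every object of the tilted heart is an iterated extension of these simples, so any HN factor landing in the strip $\phi_i<\Arg\leq\phi_{i-1}$ has class $n[S(i)]$ and hence is $S(i)^{\oplus n}$.

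For part 1) you take a genuinely different route: you rerun Nagao's inductive perturbation and defer the actual verification to \cite{N}. That is legitimate (the paper itself notes that 1) is proved in \cite{N}), but it is not self-contained as written --- the claim that one can perturb while ``keeping all other semi-stable rays on their respective sides'' is precisely the nontrivial point, and you obtain it only by citation. The paper's proof is direct and non-inductive: the required property is reformulated as $\im\bigl(\exp((\pi-\phi_i)\sqrt{-1})\,Z_i([\Phi(i)^{-1}(F)])\bigr)>0$ for all nonzero $F\in\rmmod\hat{J}_{Q_i,W_i}$, which by induction on extensions need only be checked on the simples $S_j$, and is then achieved in one stroke by setting $\phi_i=\frac{\pi}2$, $\im Z_i(e_j)=1$, $\re Z_i([\Phi(i)^{-1}(S_j)])=1$ (well-defined since both families of classes are bases). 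Besides being simpler, this construction exhibits the full family of admissible $(Z_i,\phi_i)$, which is exactly the freedom part 2) exploits when it re-chooses the charge to make $[S(i)]$ extremal; if you insist on fixing the charges once and for all in part 1), you must either build this extremality into the perturbation or invoke the invariance of the series, so you should state one of these explicitly.
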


\begin{proof}1) This result is proved in \cite{N}, however, we propose a different and simpler proof. First we note that the required properties for $Z_i$ and $\phi_i$ are equivalent to the following:
for all non-zero $E\in T_i\cap(\rmmod \hat{J}_{Q,W})$ (resp. $E\in F_i\cap(\rmmod \hat{J}_{Q,W})$) we have $\Arg(Z_i([E]))>\phi_i$ (resp. $\Arg(Z_i([E]))<\phi_i$).
Further, this is equivalent to the inequality
$$\im(\exp((\pi-\phi_i)\sqrt{-1})Z_i([\Phi(i)^{-1}(F)]))>0$$
for any non-zero $F\in\rmmod \hat{J}_{Q_i,W_i}.$ Since any $F\in \hat{J}_{Q_i,W_i}$ is an iterative extension of $S_j$'s, we just need the inequalities
$$\im(\exp((\pi-\phi_i)\sqrt{-1})Z_i([\Phi(i)^{-1}(S_j)]))>0,\quad j\in V(Q_i)=V(Q).$$
Put $$\phi_i:=\frac{\pi}2,\quad\im(Z_i(e_j)):=1,\quad\re(Z_i([\Phi(i)^{-1}(S_j)])):=1,\quad j\in V(Q)=V(Q_i).$$
This determines $Z_i$ uniquely, and it satisfies the required properties.

It is also clear
that for any fixed $\phi_i,$ the set of central charges $Z_i,$ satisfying the required properties, is in bijection
with the set $$\R_{>0}^{V(Q)}\times\R_{>0}^{V(Q_i)}.$$

2) First, the series $A^{Z_i}_{V_{>\phi_i}}$ do not depend on $Z_i$ and $\phi_i$ (satisfying the required properties) so we can choose $Z_i$ and $\phi_i$ as we want.
We proceed by induction on $i.$ For $i=1,$ we have that all non-zero representations in $M^{Z_1}_{\gamma,V_{>\phi_i}}$
are finite direct sums of copies of $S_{k_1}=S(1),$ hence by Theorem \ref{DT_for_formal}, 1), and Remark \ref{parity_for_mutations} we have
$$A^{Z_1}_{V_{>\phi_i}}=\sum\limits_{n\geq 0}[H^{\bullet}(BGL(n))]T^{\frac{n^2}2}\hat{w}_{n[S(1)]}=(-T^{\frac12}\hat{w}_{[S(1)]};T).$$
We are left to prove that \begin{equation}\label{inductive}A^{Z_i}_{V_{>\phi_i}}=A^{Z_{i-1}}_{V_{>\phi_{i-1}}}\cdot (-T^{\frac12}\hat{w}_{[S(i)]};T)^{\epsilon_i},\quad 1<i\leq r.\end{equation}

We will consider the case $\epsilon_i=+$ (the case $\epsilon_i=-$ is analogous). Then we have $S(i)=\Phi(i-1)^{-1}(S_{k_i}).$ It is clear from the proof of 1) that we
can choose $Z_{i-1}$ and $\phi_{i-1}$ in such a way that $$\Arg(Z_{i-1}([S(i)])\exp((\pi-\phi_{i-1})\sqrt{-1}))>\Arg(Z_{i-1}([\Phi(i-1)^{-1}(S_j)])\exp((\pi-\phi_{i-1})\sqrt{-1}))$$
for $j\ne k_i.$ Then, for sufficiently small $\delta>0,$ the pair $(Z_i=Z_{i-1},\phi_i=\Arg(Z_{i-1}([S(i)]))-\delta)$ satisfies
the required properties. We have (again by Theorem \ref{DT_for_formal}, 1), and Remark \ref{parity_for_mutations})
$$A^{Z_i}_{\phi_{i}<\Arg(z)\leq\phi_{i-1}}=\sum\limits_{n\geq 0}[H^{\bullet}(BGL(n))]T^{\frac{n^2}2}\hat{w}_{n[S(i)]}=(-T^{\frac12}\hat{w}_{[S(i)]};T),$$
which (together with Theorem \ref{DT_for_formal}, 3)) implies \eqref{inductive}. This proves 2).
\end{proof}

\section{Laurent phenomenon via stable framed representations}
\label{s:LPSFR}

Let now $L=\Z^m,$ $\Lambda:L\times L\to\Z$ a skew-symmetric form, and $$\cT_{\Lambda}^{mot}:=\cT_{\Lambda}\otimes_{\Z[q^{\pm\frac12}]}\hat{R}[T^{\frac12}],\quad q^{\frac12}\mapsto T^{\frac12},$$ the associated
motivic quantum torus. Fix some $1\leq n\leq m,$ and suppose that
$$(M(c)=X^c,\tilde{B})$$ is the skew-symmetric quantum seed. Let $S$ be its mutation-equivalence class,
and $\cA_S\subset\cF_{\Lambda}$ the associated quantum cluster algebra.

If $(M',\tilde{B}')$ is some quantum seed, mutation-equivalent to $(M,\tilde{B}),$ then by Laurent phenomenon,
for each $\lambda\in\Z_{\geq 0}^m$ we have that $M'(\lambda)\in\cT_{\Lambda},$ so we have a well-defined element
$$M^{'mot}(\Lambda):=M'(\lambda)\otimes 1\in\cT_{\Lambda}^{mot}.$$

Take the quiver $Q$ as in Subsection \ref{ss:MSSQS}. Namely, $V(Q)=\{1,\dots,m\},$ and $Q$ is without loops and 2-cycles, so that
$$a_{jj}-a_{ij}=b_{ij},\quad 1\leq i\leq m,\,1\leq j\leq n.$$
We do not impose any restrictions for the numbers $a_{ij}$ for $n+1\leq i,j\leq m.$

Let $W$ be some formal potential on $Q.$
We have an identification $K_0(\Perf(\hat{\Gamma}_{Q,W}))\cong\Z^m,$ and for the embedding
$\iota:D^b(\hat{\Gamma}_{\tilde{Q},W})\hookrightarrow\Perf(\hat{\Gamma}_{\tilde{Q},W})$
we have $$\iota([S_j])=(b_{1j},\dots,b_{mj}),\quad 1\leq j\leq n,$$
by Proposition \ref{Koszul_res}.
It follows from the compatibility condition on $\Lambda$ and $\tilde{B}$ that
\begin{equation}\label{compatibility}\Lambda([\hat{\Gamma}_{Q,W,i}],[\iota(S_j)])=-\chi([\hat{\Gamma}_{Q,W,i}],[\iota(S_j)])=-\delta_{ij},\quad 1\leq i\leq m,\,1\leq j\leq n.\end{equation}

Let $\cC_{[1,\dots,n]}\subset \rmmod \hat{J}_{Q,W}$ be the subcategory
of representations supported on the vertices $1,\dots,n.$ It follows from \eqref{compatibility} and Proposition \ref{Koszul_res} that for any two objects
$E,F\in\cC_{[1,\dots,n]}$ we have
$$\Lambda([\iota(E)],[\iota(F)])=-\chi_Q([E],[F])+\chi_Q([F],[E]).$$ Therefore,
the coordinate sublattice $\Z^n=K_0(\cC_{[1,\dots,n]})\subset K_0(\rmmod \hat{J}_{Q,W})=\Z^m$
defines the motivic quantum torus
$\cT_{[1,\dots,n]}^{mot},$ equipped with the natural injective morphism
$$\cT_{[1,\dots,n]}^{mot}\hookrightarrow \cT_{\Lambda}^{mot},\quad ,\hat{w}_{[E]}\mapsto X^{[\iota(E)]}.$$
From now on, we identify the elements of $\cT_{[1,\dots,n]}^{mot}$ with their images in $\cT_{\Lambda}^{mot},$
and similarly for various completions.

Now, let $\un{k}=(k_1,\dots,k_r)$ be any sequence of vertices, $1\leq k_i\leq n,$ $k_i\ne k_{i+1}.$ We assume that $W$ is well-mutatable with
 respect to $\un{k}.$
As above, we put
$$(Q_i,W_i)=\mu_{k_i}(\dots(\mu_{k_1}(\tilde{Q},W))\dots),\quad 1\leq i\leq r.$$

{\noindent{\bf Assumption. }}We may and will assume that $W_r$ is a polynomial potential on $Q_r.$

We have a sequence of signs $\epsilon_1,\dots,\epsilon_r\in\{\pm\}$ as in Theorem \ref{Nagao}.
Again, we put $$\Phi(i)=\Phi_{k_i,\epsilon_i}\circ\dots\circ\Phi_{k_1,\epsilon_1},\quad 0\leq i\leq r,$$
so that
$$\Phi(i)^{-1}(\rmMod \hat{J}_{Q_i,W_i})=(\rmMod \hat{J}_{Q,W})^{(T_i[-1],F_i)}.$$
Note that we have an inclusion
\begin{equation}\label{eq:inclusion}T_i\cap\rmmod \hat{J}_{\tilde{Q},W}\subset \cC_{[1,\dots,n]},\end{equation}
which follows from the proof of Theorem \ref{Nagao}.

Take the central charge $Z_r$ on $Q,$ and an angle $\phi_r$ from Theorem \ref{torsion_pairs_via_Z}. Then, the DT series $A^{Z_r}_{V_{>\phi_r}}$ is well-defined, and it belongs to the completion of $\cT_{[1,\dots,n]}^{mot},$
hence we may and will treat it as an element of the completion of $\cT_{\Lambda}^{mot}.$

For convenience, we put
$$\hat{\Gamma}_{\un{k},\lambda}=\bigoplus_{j=1}^m\hat{\Gamma}_{\un{k},j}^{\oplus\lambda^j},\quad \hat{\Gamma}_{\un{k},j}=\Phi(r)^{-1}(\Gamma_{Q_r,W_r,j}).$$

\begin{theo}\label{cluster_var_via_conj}Let $M_i=\mu_{k_i}(\dots(\mu_{k_1}(M))\dots):\Z^m\to\cF_{\lambda}$ be the mutated toric frame. Then, for any $\lambda\in\Z_{\geq 0}^m,$
we have the equality
$$M_r^{mot}(\lambda)=A^{Z_r}_{V_{>\phi_r}}\cdot X^{[\hat{\Gamma}_{\un{k},\lambda}]}\cdot (A^{Z_r}_{V_{>\phi_r}})^{-1}.$$
\end{theo}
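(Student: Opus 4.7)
The plan is to induct on the length $r$ of the mutation sequence. The base case $r=0$ is immediate: $A^{Z_0}_{V_{>\phi_0}}=1$, the DG module $\hat{\Gamma}_{\emptyset,\lambda}=\bigoplus_j\hat{\Gamma}_{Q,W,j}^{\oplus\lambda_j}$ has class $\lambda\in\Z^m$, and hence the right-hand side collapses to $X^{\lambda}=M^{mot}(\lambda)$.

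For the inductive step, I would exploit Theorem \ref{torsion_pairs_via_Z}(2) to factor
$$A^{Z_r}_{V_{>\phi_r}}=A^{Z_{r-1}}_{V_{>\phi_{r-1}}}\cdot\Psi_r^{\epsilon_r},\qquad \Psi_r:=(-T^{\frac12}\hat{w}_{[\iota(S(r))]};T)_{\infty},$$
with $\Psi_r$ viewed in the completion of $\cT^{mot}_{\Lambda}$ via the embedding $\cT^{mot}_{[1,\dots,n]}\hookrightarrow\cT^{mot}_{\Lambda}$. The toric-frame mutation rule of Subsection \ref{ss:MSSQS} expresses each $M_r^{mot}(\lambda)$ as a specific $\Z[T^{\pm\frac12}]$-combination of quantum monomials in the $M_{r-1}^{mot}(e_j)$'s, and the inductive hypothesis presents every such $M_{r-1}^{mot}(\nu)$ as a conjugate of $X^{[\hat{\Gamma}_{\un{k}_{[1,r-1]},\nu}]}$ by $A^{Z_{r-1}}_{V_{>\phi_{r-1}}}$. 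Cancelling the outer $A^{Z_{r-1}}_{V_{>\phi_{r-1}}}$ factors therefore reduces the inductive step to a single-step identity
$$\Psi_r^{\epsilon_r}\cdot X^{[\hat{\Gamma}_{\un{k},\lambda}]}\cdot \Psi_r^{-\epsilon_r}=\sum_{\nu}c_{\nu}(T^{\frac12})\,X^{[\hat{\Gamma}_{\un{k}_{[1,r-1]},\nu}]},$$
whose right-hand side must reproduce the $M_{r-1}$-description of the mutated toric frame.

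The target identity is a pentagon-like identity for the quantum dilogarithm: if $\hat{w}_{\mu}\hat{w}_v=T^d\hat{w}_v\hat{w}_{\mu}$, then $\Psi(\hat{w}_v)^{\pm}\hat{w}_{\mu}\Psi(\hat{w}_v)^{\mp}$ equals $\hat{w}_{\mu}$ multiplied by a finite product of terms of the form $(1+T^{\bullet}\hat{w}_{\pm v})$, with $|d|$ factors. To apply it I would (i) compute the commutation of $\hat{w}_{[\iota(S(r))]}$ with $X^{[\hat{\Gamma}_{\un{k},\lambda}]}$ via \eqref{compatibility}, which identifies $\Lambda$ on the $\iota(S_j)$-sublattice with the antisymmetrized Euler pairing; (ii) decompose $[\hat{\Gamma}_{\un{k},k_r}]$ in terms of $[\hat{\Gamma}_{\un{k}_{[1,r-1]},\bullet}]$ using the Keller-Yang triangles of Theorem \ref{Keller_Yang}(2); and (iii) verify that the two exchange monomials produced by the finite product reproduce the two summands of $M_r(e_{k_r})$ from the mutation formula, followed by quantum multiplicativity to handle general $\lambda$. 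The main obstacle is the bookkeeping: the sign $\epsilon_r$ of Theorem \ref{Nagao} must match the particular Keller-Yang triangle selected in step (ii) (this is exactly the content of Lemma \ref{Nagao_Plamondon} at each step), and the $T^{\frac12}$-powers coming from the toric-frame twist $(q^{-\frac12})^{\sum \Lambda_{\eta}(e_i,e_j)c_ic_j}$ must be reconciled with the $T^{-\chi/2}$ normalization of $\hat{w}_{\gamma}$; both matchings are forced by \eqref{compatibility} but require delicate tracking.
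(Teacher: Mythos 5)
Your proposal is correct and follows essentially the same route as the paper: the paper also factors $A^{Z_r}_{V_{>\phi_r}}$ into quantum dilogarithms via Theorem \ref{torsion_pairs_via_Z}(2), reduces to single cluster variables, and runs an induction on the mutation steps in which the only nontrivial case ($k=k_{l+1}$) is handled by exactly your one-variable dilogarithm conjugation identity, with the commutation exponents computed from \eqref{compatibility} and the two resulting monomials matched against the toric-frame exchange relation (the Keller--Yang triangles and the Nagao sign choice entering just as you describe). The only difference is organizational --- the paper inducts forward on the number of dilogarithm factors rather than peeling off the last one --- which does not change the substance.
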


\begin{proof} First, the statement reduces to the case when $\lambda_j=\delta_{kj}$ for some $1\leq k\leq m.$ So, we will assume that this is the case.
By Theorem \ref{torsion_pairs_via_Z}, we have that $$A^{Z_r}_{V_{>\phi_r}}=(-T^{\frac12}\hat{w}_{[S(1)]};T)_{\infty}^{\epsilon_1}\dots (-T^{\frac12}\hat{w}_{[S(r)]};T)_{\infty}^{\epsilon_r}.$$
We will prove by induction on $0\leq i\leq r$ that
\begin{multline}\label{cluster_conj}M_i^{mot}(e_k)=(-T^{\frac12}\hat{w}_{[S(1)]};T)_{\infty}^{\epsilon_1}\dots (-T^{\frac12}\hat{w}_{[S(i)]};T)_{\infty}^{\epsilon_i}\cdot\\
\cdot X^{\Phi(i)^{-1}([\hat{\Gamma}_{\Q_i,W_i,k}])}\cdot
(-T^{\frac12}\hat{w}_{[S(i)]};T)_{\infty}^{-\epsilon_i}\dots (-T^{\frac12}\hat{w}_{[S(1)]};T)_{\infty}^{-\epsilon_1}.\end{multline}

For $i=0,$ there is nothing to prove. Suppose that \eqref{cluster_conj} is proved for some $i=l,$ $0\leq l<r.$
Then \eqref{cluster_conj} holds for $i=l+1,$ $k\ne k_{l+1},$ since $$M_l(e_k)=M_{l+1}(e_k),\quad [\hat{w}_{[S(l+1)]},X^{\Phi(l+1)^{-1}([\hat{\Gamma}_{\Q_{l+1},W_{l+1},k}])}]=[\hat{w}_{[S(l+1)]},X^{\Phi(l)^{-1}([\hat{\Gamma}_{Q_l,W_l,k}])}]=0.$$
just because
$$\Lambda(\Phi(l)^{-1}([\hat{\Gamma}_{\Q_l,W_l,k}]),[\iota(S(l+1))])=
\chi(\Phi(l)^{-1}([\hat{\Gamma}_{\Q_l,W_l,k}]),[S(l+1)])=\pm\delta_{k,k_{l+1}}=0.$$
Now, for the case $k=k_{l+1},$ we need the following Lemma.

\begin{lemma} Let $x,y$ be some formal variables, satisfying
$$xy=q^{\epsilon}yx,\quad\epsilon=\pm 1.$$
Then we have $$(-q^{\frac12}x;q)_{\infty}^{-\epsilon}y (-q^{\frac12}x;q)_{\infty}^{\epsilon}=y(1+q^{\frac{\epsilon}2}x).$$
\end{lemma}

\begin{proof}Indeed, it suffices to note that
$$(1+q^{\frac{d}2}x)y=y(1+q^{\frac{d}2-\epsilon}x).$$\end{proof}

Applying the above Lemma to $x=\hat{w}_{[S(l+1)]},$ $y=X^{\Phi(l+1)^{-1}([\hat{\Gamma}_{\Q_{l+1},W_{l+1},k_{l+1}}])},$
and $q^{\frac12}$ replaced by $T^{\frac12},$ we obtain the following equality:
\begin{multline*}(-\hat{w}_{[S(l+1)]};T)_{\infty}^{\epsilon_i}X^{\Phi(l+1)^{-1}([\hat{\Gamma}_{\Q_{l+1},W_{l+1},k_{l+1}}])}
(-\hat{w}_{[S(l+1)]};T)_{\infty}^{-\epsilon_i}=\\
X^{\Phi(l)^{-1}(-[\hat{\Gamma}_{Q_l,W_l,k_{l+1}}]+\sum\limits_{(\tilde{B}_l)_{j,k_{l+1}}>0}(\tilde{B}_l)_{j,k_{l+1}}[\hat{\Gamma}_{Q_l,W_l,j}])}+\\
X^{\Phi(l)^{-1}(-[\hat{\Gamma}_{Q_l,W_l,k_{l+1}}]-\sum\limits_{(\tilde{B}_l)_{j,k_{l+1}}<0}(\tilde{B}_l)_{j,k_{l+1}}
[\hat{\Gamma}_{Q_l,W_l,j}])}.\end{multline*}
Then, applying the inductive assumption, we conclude that \eqref{cluster_conj} holds for $i=l+1,$ $k=k_{l+1}.$ This proves Theorem.
\end{proof}

The central charge $Z_r$ on $Q$ defines the central charge $Z_r'$ on $Q_r,$ namely
$$Z_r'(\gamma)=\exp((\pi-\phi_r)\sqrt{-1})Z([\Phi(r)]^{-1}(\gamma)).$$

\begin{theo}\label{LP_by_sfr}For any $\lambda\in\Z_{\geq 0}^m$ we have \begin{equation}\label{LP_via_sfr}M_r^{mot}(\lambda)=X^{[\hat{\Gamma}_{\un{k},\lambda}]}\sum\limits_{\gamma\in\Z_{\geq 0}^{m}}[D(H^{\bullet,crit}_c(\cM_{\gamma,<(\pi-\phi_r),\lambda}^{sp,sfr},(W_r)_{\gamma}))]\cdot T^{-\frac12 \chi_{Q_r}(\gamma,\gamma)}\cdot
X^{\iota(\Phi(r)^{-1}[1](\gamma))}.\end{equation}
Moreover, the RHS is actually a finite sum.

In particular, we have that
\begin{equation}\label{Tate_type}[H^{\bullet,crit}_c(\cM_{\gamma,<(\pi-\phi_r),\lambda}^{sp,sfr},(W_r)_{\gamma})]\in \Z[T^{\pm 1}]\subset R\end{equation}
for all $\gamma.$\end{theo}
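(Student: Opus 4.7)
The plan is to combine the conjugation formula of Theorem \ref{cluster_var_via_conj}, namely
$$M_r^{mot}(\lambda) = A_{V_{>\phi_r}}^{Z_r}\cdot X^{[\hat\Gamma_{\un k, \lambda}]}\cdot (A_{V_{>\phi_r}}^{Z_r})^{-1},$$
with the stable-framed-representation identity of Proposition \ref{conj_and_sfr}. Since the latter requires a polynomial potential, the essential step is to transfer the conjugation from the formal QP $(Q,W)$ to the polynomial QP $(Q_r, W_r)$ via the cyclic $A_\infty$-equivalence lifting $\Phi(r)$ (Proposition \ref{3CYlifting}), and then to apply Proposition \ref{conj_and_sfr} on the $Q_r$-side where it is literally available.

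The first step is to match the two DT series. By Theorem \ref{torsion_pairs_via_Z}(1) and the definition of $Z_r'$, the category $\cC_{V_{>\phi_r}}^{Z_r} = T_r \cap \rmmod \hat J_{Q,W}$ is carried by $\Phi(r)$ onto a shift of the slope-$<(\pi-\phi_r)$ part of $\rmmod \hat J_{Q_r, W_r}$. Concretely, a positive class $\gamma \in K_0(\rmmod \hat J_{Q_r, W_r})$ of a slope-$<(\pi-\phi_r)$ object corresponds, on the $Q$-side, to the class $\Phi(r)^{-1}[1](\gamma) \in K_0(\rmmod \hat J_{Q,W})$ (the $[1]$ accounting precisely for the shift), which $\iota$ then sends to the exponent $\iota(\Phi(r)^{-1}[1](\gamma)) \in \Z^m$ appearing in \eqref{LP_via_sfr}. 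Theorem \ref{DT_for_formal}(2) --- applicable since the parity condition \eqref{parity_preserved} holds for mutation-induced equivalences by Remark \ref{parity_for_mutations} --- identifies $A_{V_{>\phi_r}}^{Z_r}$ on the $Q$-side with the image of $A_{V_{<(\pi-\phi_r)}}^{Z_r'}$ on the $Q_r$-side under this $K_0$-correspondence. Using the compatibility $\Lambda([\hat\Gamma_{Q,W,i}], [\iota(S_j)]) = -\delta_{ij}$ together with Proposition \ref{Koszul_res}(1), one further checks that the commutation relations match those in the $Q_r$-framed motivic quantum torus of Proposition \ref{conj_and_sfr}, under which $X^{[\hat\Gamma_{\un k, \lambda}]}$ corresponds to the framing generator $\hat w_{(0,1)}$.

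With these identifications in hand, applying Proposition \ref{conj_and_sfr} to $(Q_r, W_r)$ with central charge $Z_r'$, strict angle $\pi-\phi_r$, and framing vector $\lambda$ gives
$$A_{V_{<(\pi-\phi_r)}}^{Z_r'}\,\hat w_{(0,1)}\,(A_{V_{<(\pi-\phi_r)}}^{Z_r'})^{-1} = \hat w_{(0,1)}\cdot A_{<(\pi-\phi_r),\lambda}^{sfr},$$
and translating back to $\cT_\Lambda^{mot}$ --- with $\hat w_\gamma$ becoming $X^{\iota(\Phi(r)^{-1}[1](\gamma))}$ up to the Euler-form factor $T^{-\frac12\chi_{Q_r}(\gamma,\gamma)}$ --- yields exactly \eqref{LP_via_sfr}. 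The finiteness of the sum is standard: the stable framed moduli $\cM^{sp,sfr}_{\gamma, <(\pi-\phi_r), \lambda}$ are non-empty only for $\gamma$ in a bounded subset of $\Z_{\geq 0}^m$ controlled by $\lambda$ and $Z_r'$. The Tate-type integrality \eqref{Tate_type} then follows by comparison with the left-hand side, which by the classical Laurent phenomenon (Theorem \ref{LP}) lies in $\cT_\Lambda\otimes\Z[T^{\pm 1/2}]$; a parity argument on the $T^{-\chi_{Q_r}(\gamma,\gamma)/2}$ prefactors pins each class down to $\Z[T^{\pm 1}]$.

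The principal difficulty I expect is the bookkeeping of the shift across $\Phi(r)$: Theorem \ref{DT_for_formal}(2) is formulated for pairs of sectors on both sides, whereas $\Phi(r)$ sends $\cC_{V_{>\phi_r}}^{Z_r}$ into a shift of the $Q_r$-heart rather than into a genuine sector. One needs either to extend that theorem to allow for shifted hearts --- via the evident $K_0$-level sign-change automorphism of the motivic quantum torus --- or to argue directly through the cyclic $A_\infty$-functor of Proposition \ref{3CYlifting}, transferring both conjugation factors simultaneously. Once this shift-bookkeeping is laid down consistently, the rest of the argument is a direct algebraic check of the commutation relations in the extended quantum torus.
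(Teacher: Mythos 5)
Your overall strategy is the one the paper uses: the identity is obtained by combining Theorem \ref{cluster_var_via_conj} with Theorem \ref{DT_for_formal}, 2) (the parity hypothesis supplied by Remark \ref{parity_for_mutations}) so as to transport the conjugating DT series to the side of the polynomial QP $(Q_r,W_r)$, and then invoking Proposition \ref{conj_and_sfr} there; the shift bookkeeping you flag is indeed absorbed into the cyclic $A_\infty$-lift of Proposition \ref{3CYlifting}, and your treatment of \eqref{Tate_type} (comparison with the genuine Laurent polynomial $M_r(\lambda)\in\cT_\Lambda$ and the observation that the classes live in $\hat R$, so $\Z[T^{\pm\frac12}]\cap\hat R=\Z[T^{\pm1}]$) is the paper's argument.

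However, there is a genuine gap in your handling of finiteness. You assert it is ``standard'' that $\cM^{sp,sfr}_{\gamma,<(\pi-\phi_r),\lambda}$ is nonempty only for $\gamma$ in a bounded set ``controlled by $\lambda$ and $Z_r'$''. That is false for framed moduli of a general QP: already for one-vertex examples of noncommutative Hilbert scheme type the nilpotent stable framed loci are nonempty for every dimension vector and the corresponding generating series is an honest infinite series, and neither the framing vector nor the central charge by itself bounds $\gamma$. The finiteness here is a cluster-theoretic fact, and it is exactly where Corollary \ref{fin_dim_of_H^1} (Plamondon) enters: a pair $(E,u)$ is a stable framed point precisely when $H^1(\Phi(r)^{-1}(u))\colon H^1(\hat{\Gamma}_{\un{k},\lambda})\to\Phi(r)^{-1}(E)[1]$ is surjective, and since $H^1(\hat{\Gamma}_{\un{k},\lambda})$ is finite-dimensional one gets $\un{\dim}\,\Phi(r)^{-1}(E)[1]\leq\un{\dim}\,H^1(\hat{\Gamma}_{\un{k},\lambda})$, which bounds the contributing $\gamma$. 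A closely related omission occurs in your ``one further checks the commutation relations'' step: to identify $X^{[\hat{\Gamma}_{\un{k},\lambda}]}$ with the framing generator $\hat w_{(0,1)}$ of $\cT_{L_\lambda}^{mot}$ one needs $\Lambda([\hat{\Gamma}_{\un{k},\lambda}],\Phi(r)^{-1}[1](\gamma))=\sum_i\lambda^i\gamma^i$ for all contributing $\gamma$, and this requires the inclusion \eqref{eq:inclusion}, i.e.\ that $\Phi(r)^{-1}(E)[1]$ lies in the coefficient-free subcategory $\cC_{[1,\dots,n]}$ where the compatibility \eqref{compatibility} (stated only for $1\leq j\leq n$) applies; the compatibility and Proposition \ref{Koszul_res} alone do not yield this. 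With these two points supplied, your argument coincides with the paper's proof.
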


\begin{proof}First we show that the sum in the RHS is finite. Take the projective $\hat{J}_{Q_r,W_r}$-module
$\hat{J}_{Q_r,W_r,\lambda}=\bigoplus\limits_{j=1}^m \hat{J}_{Q_r,W_r,j}^{\oplus\lambda_j}.$ Then the pair
$((E\in M^{sp}_{\gamma,V_{<(\pi-\phi_r)}},u:\hat{J}_{Q_r,W_r,\lambda}\to E))$ defines a point in $M^{sp,sfr}_{\gamma,\pi-\phi_r,\lambda}$ iff the map
$$H^1(\Phi(r)^{-1}(u)):H^1(\Phi(r)^{-1}(\hat{J}_{Q_r,W_r,\lambda}))\to \Phi(r)^{-1}(E)[1]$$
is surjective. By Corollary \ref{fin_dim_of_H^1}, $H^1(\Phi(r)^{-1}(\hat{J}_{Q_r,W_r,\lambda}))=H^1(\hat{\Gamma}_{\un{k},\lambda})$ is finite-dimensional. In particular, $M^{sp,sfr}_{\gamma,\pi-\phi_r,\lambda}$ can be non-empty only if
$$0\leq\un{\dim}(\Phi(r)^{-1}(E)[1])\leq \un{\dim}\,H^1(\Phi(r)^{-1}(\hat{J}_{Q_r,W_r,\lambda})).$$

This implies that the sum in the RHS of \eqref{LP_via_sfr} is finite. By \eqref{eq:inclusion}, we have that
$$\Phi(r)^{-1}(E)[1]\in\cC_{[1,\dots,n]}.$$ Hence, for all $\gamma$ which contribute to the RHS of \eqref{LP_via_sfr}, we have that
$$\Lambda([\hat{\Gamma}_{\un{k},\lambda}],\Phi(r)^{-1}[1](\gamma))=
\sum\limits_{i=1}^m\lambda^i\gamma^i.$$

Thus, \eqref{LP_via_sfr} follows immediately from Theorem \ref{cluster_var_via_conj}, Theorem \ref{DT_for_formal}, 2), Remark \ref{parity_for_mutations}
and Proposition \ref{conj_and_sfr}.

Further, it follows that
$$[H^{\bullet,crit}_c(\cM_{\gamma,<(\pi-\phi_r),\lambda}^{sp,sfr},(W_r)_{\gamma})]\in\Z[T^{\pm \frac12}]\cap\hat{R}=\Z[T^{\pm 1}]\subset \hat{R}[T^{\frac12}],$$
so \eqref{Tate_type} holds.
\end{proof}

\section{Positivity conjecture via purity}
\label{s:PCVP}

\begin{defi}A graded object $H^{\bullet}$ in the category $MMHS$ (with $H^n=0$ for $|n|>>0$) is said to admit a Lefschetz operator centered at $N,$
if there is a morphism
$$L:H^{\bullet}\to H^{\bullet}(1)[2]$$
which induces isomorphisms
$$L^k:H^{N-k}\to H^{N+k}(k)$$
for all $k\in\Z_{>0}.$\end{defi}

\begin{cor}\label{purity->positivity}In the assumptions of Theorem \ref{LP_by_sfr}, suppose that for some $\lambda$ and for all $\gamma$ we have that
$H^{i,crit}_c(\cM_{\gamma,\pi-\phi_r,\lambda}^{sp,sfr},(W_r)_{\gamma})$
is pure of weight $i.$ Then the positivity conjecture holds for $M_r(\lambda)$.

Suppose that, moreover, that for some $\lambda$ and for all $\gamma$ the graded object $H^{\bullet,crit}_c(\cM_{\gamma,\pi-\phi_r,\lambda}^{sp,sfr},(W_r)_{\gamma})$
admits a Lefschetz operator. Then Conjecture \ref{conj_Lefschetz} holds for $M_r(\lambda).$\end{cor}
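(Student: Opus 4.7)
The plan is to read off positivity directly from the explicit formula of Theorem \ref{LP_by_sfr}, using purity to turn each factor in that formula into a non-negative Laurent polynomial in $T^{1/2}$.

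First I would translate the purity hypothesis into an explicit non-negative expression in $K_0(MMHS)$. From Theorem \ref{LP_by_sfr} we already know that the total class $[H^{\bullet,crit}_c(\cM^{sp,sfr}_{\gamma,<(\pi-\phi_r),\lambda},(W_r)_\gamma)]$ lies in $\Z[T^{\pm 1}]\subset R$. Combined with the assumption that $H^{i,crit}_c$ is pure of weight $i$ for every $i$, one observes that a pure MMHS of weight $w$ whose class lies in $\Z[T^{\pm 1}]$ (no fractional power $T^{1/2}$) must be Tate of even weight, of the form $d\cdot\Q(-w/2)$ with $d\geq 0$. Hence $H^{i,crit}_c=0$ for $i$ odd, and $[H^{2k,crit}_c]=d_k(\gamma)\cdot T^k$ with $d_k(\gamma):=\dim H^{2k,crit}_c\geq 0$. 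Taking Verdier dual (which sends $T\mapsto T^{-1}$ and inverts the cohomological degree) yields
$$[D(H^{\bullet,crit}_c(\cM_\gamma))]=\sum_k d_k(\gamma)\,T^{-k}\in\Z_{\geq 0}[T^{\pm 1}].$$

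Next I would substitute this into the formula of Theorem \ref{LP_by_sfr} and multiply out in $\cT_\Lambda^{mot}$. The quantum-torus commutation introduces a single monomial factor $T^{\frac12\Lambda([\hat\Gamma_{\un k,\lambda}],\iota(\Phi(r)^{-1}[1](\gamma)))}$ for each $\gamma$. Collecting the result by $X^c$, each coefficient $P^{mot}_c(T^{1/2})\in\hat R[T^{\pm 1/2}]$ appears as a non-negative integer combination of monomials in $T^{1/2}$. The natural embedding $\Z[q^{\pm 1/2}]\hookrightarrow\hat R[T^{\pm 1/2}]$, $q^{1/2}\mapsto T^{1/2}$, is injective, and since $M_r^{mot}(\lambda)=M_r(\lambda)\otimes 1$, the coefficient $P_c(q^{1/2})\in\Z[q^{\pm 1/2}]$ of $M_r(\lambda)$ is obtained from $P^{mot}_c(T^{1/2})$ by the substitution $T^{1/2}\mapsto q^{1/2}$. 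Non-negativity is preserved, which is the positivity conjecture for $M_r(\lambda)$.

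For the stronger Lefschetz assertion, the $\mathfrak{sl}_2$-decomposition induced by a Lefschetz operator on $H^{\bullet,crit}_c(\cM_\gamma)$ centered at some $N_\gamma$, together with purity, produces a decomposition into primitive pieces $V_k(\gamma)$ of dimension $n_k(\gamma)\geq 0$ sitting in degrees $N_\gamma-k,N_\gamma-k+2,\dots,N_\gamma+k$. The constraint $[H^{\bullet}]\in\Z[T^{\pm 1}]$ forces only $k$'s of parity congruent to $N_\gamma$ (so that every degree $N_\gamma-k+2j$ is even) to contribute, and then
$$[H^{\bullet,crit}_c(\cM_\gamma)]=\sum_k n_k(\gamma)\cdot T^{N_\gamma/2}\bigl(T^{-k/2}+T^{(2-k)/2}+\dots+T^{k/2}\bigr).$$
Inserting into Theorem \ref{LP_by_sfr} and using that a monomial shift $T^{m/2}$ turns this pattern into the same pattern centered at $N_\gamma+m$, each $P^{mot}_c$ becomes a non-negative integer combination of Lefschetz polynomials whose non-zero indices $k$ all have a single fixed parity, giving Conjecture \ref{conj_Lefschetz}. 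The main technical subtlety is precisely this last point: the Lefschetz property as formulated requires a single center $N$ in the expansion of each $P_c$, and when several $\gamma$'s contribute to the same $X^c$ their centers $N_\gamma$ combined with the shifts $-\tfrac12\chi_{Q_r}(\gamma,\gamma)+\tfrac12\Lambda(\ldots)$ must align to a common $N$; in the cases one can verify (e.g.\ the acyclic case of Theorem \ref{positivity_for_acyclic}) this alignment is built into the natural Lefschetz operator coming from a relative polarization on the family $\cM_{\gamma,<(\pi-\phi_r),\lambda}^{sfr}$, while for positivity alone no such alignment is needed and the conclusion is immediate from the first two steps.
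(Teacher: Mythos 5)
Your proof of the first assertion is correct and is essentially the paper's own argument: purity of weight $i$ in degree $i$, combined with the Tate--type statement \eqref{Tate_type} from Theorem \ref{LP_by_sfr}, forces the odd critical cohomology to vanish and each even class to be a non-negative integer multiple of a power of $T$; plugging this into \eqref{LP_via_sfr} and absorbing the half-integer powers of $T$ coming from the quantum-torus commutation gives positivity of every coefficient $P_c(q^{\frac12})$.

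For the second assertion, however, you name the decisive point and then leave it open. The Lefschetz property of Conjecture \ref{conj_Lefschetz} demands a single center $N$ for each coefficient $P_c$, so your argument needs that each monomial $X^c$ receives a contribution from at most one $\gamma$; your fallback via a ``relative polarization on the family'' does not close this, both because the hypothesis of the corollary is only the existence of a Lefschetz operator on $H^{\bullet,crit}_c$ for each separate $\gamma$, and because a polarization on a single moduli space could not align centers coming from different $\gamma$'s anyway. The missing observation is that collisions simply do not occur: by \eqref{eq:inclusion} every contributing class $\Phi(r)^{-1}[1](\gamma)$ lies in $K_0(\cC_{[1,\dots,n]})=\Z^n\subset K_0(D^b(\hat{\Gamma}_{Q,W}))$, and on this sublattice $\iota$ is given by the matrix $\tilde{B}$ (Proposition \ref{Koszul_res}), which is injective because the compatibility condition $\tilde{B}^t\Lambda_M=\widetilde{I_n}$ forces $\tilde{B}$ to have rank $n$. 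Since $[\Phi(r)^{-1}[1]]$ is an isomorphism on $K_0$, the map $\gamma\mapsto \iota(\Phi(r)^{-1}[1](\gamma))$ is injective on the set of contributing $\gamma$, so each $P_c$ is, up to one overall monomial $T^{\frac{m}2}$ with $m\in\Z$, the dual class of a single $H^{\bullet,crit}_c(\cM_{\gamma,<(\pi-\phi_r),\lambda}^{sp,sfr},(W_r)_{\gamma})$. Your $\mathfrak{sl}_2$-string and parity argument (cohomology concentrated in even degrees, hence only indices $k$ of one fixed parity survive for the given center) then completes the proof of the Lefschetz assertion in full generality, which is what the paper means when it dismisses this half as clear.
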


\begin{proof}Our assumption, together with \eqref{Tate_type}, immediately implies that $$H^{2i-1,crit}_c(\cM_{\gamma,\pi-\phi_r,\lambda}^{sp,sfr},(W_r)_{\gamma})=0,$$ $$H^{2i,crit}_c(\cM_{\gamma,\pi-\phi_r,\lambda}^{sp,sfr},(W_r)_{\gamma})=\Q(-i)^{\oplus a_{i,\gamma}},\,a_{i,\gamma}\geq 0,$$
hence $$M_r^{mot}(\lambda)=X^{[\hat{\Gamma}_{\un{k},\lambda}]}\cdot\sum\limits_{i,\gamma}a_{i,\gamma}T^{-i-\frac12\chi_{Q_r}(\gamma,\gamma)} X^{\iota(\Phi(r)^{-1}[1](\gamma))},\quad a_{i,\gamma}\in\Z_{\geq 0.}$$

The second assertion is then clear.\end{proof}

For any representation $E$ of a quiver $Q,$ and a dimension vector $\gamma\in\Z^{V(Q)},$ the quiver Grassmannian
$\Gr(E,\gamma)$ is the scheme of subrepresentations $E'\subset E$ with $\un{\dim}(E/E')=\gamma.$ Clearly,
$\Gr(E,\gamma)$ is a projective scheme.

\begin{prop}\label{projectivity} In the assumptions of Theorem \ref{LP_by_sfr},
Assume that the subvariety $$\cM_{\gamma,<(\pi-\phi_r),\lambda}^{sp,sfr}\subset \cM_{\gamma,<(\pi-\phi_r),\lambda}^{sfr}$$
is a union of connected components of the subvariety $Crit((W_r)_{\gamma}).$ Then we can treat $\cM_{\gamma,<(\pi-\phi_r),\lambda}^{sp,sfr}$
 as a union of connected components of the scheme $Crit((W_r)_{\gamma}).$ Then there is an isomorphism of schemes
$$\cM_{\gamma,<(\pi-\phi_r),\lambda}^{sp,sfr}\cong \Gr(H^1(\hat{\Gamma}_{\un{k},\lambda}),\Phi(r)^{-1}[1](\gamma)).$$
In particular, the scheme $\cM_{\gamma,<(\pi-\phi_r),\lambda}^{sp,sfr}$ is projective.\end{prop}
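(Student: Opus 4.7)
My strategy is to apply the derived equivalence $\Phi(r)^{-1}$ to transfer the data of a stable framed representation from $D^b(\hat\Gamma_{Q_r,W_r})$ to $D^b(\hat\Gamma_{Q,W})$, where a direct $\Hom$-computation identifies the resulting objects with points of the claimed quiver Grassmannian. Given $(E,u)\in \cM^{sp,sfr}_{\gamma,<(\pi-\phi_r),\lambda}$, the argument already used in the proof of Theorem~\ref{LP_by_sfr} (via the slope hypothesis on $E$ and the inclusion \eqref{eq:inclusion}) shows that $\tilde E:=\Phi(r)^{-1}(E)$ is concentrated in degree $1$ of the original $t$-structure on $D^b(\hat\Gamma_{Q,W})$, so $\tilde E = T[-1]$ with $T:=\Phi(r)^{-1}(E)[1]\in T_r\cap\cC_{[1,\dots,n]}$ of dimension vector $\Phi(r)^{-1}[1](\gamma)$. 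The induced map $\tilde u:=\Phi(r)^{-1}(u):\hat\Gamma_{\un k,\lambda}\to T[-1]$ then satisfies that $H^1(\tilde u):H^1(\hat\Gamma_{\un k,\lambda})\twoheadrightarrow T$ is surjective, while $H^1(\hat\Gamma_{\un k,\lambda})$ is finite-dimensional by Corollary~\ref{fin_dim_of_H^1}.

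To identify $\tilde u$ with its $H^1$, use the truncation triangle $H^0(\hat\Gamma_{\un k,\lambda})\to \hat\Gamma_{\un k,\lambda}\to H^1(\hat\Gamma_{\un k,\lambda})[-1]$; applying $\Hom(-,T[-1])$ and using the vanishings $\Ext^{-1}_{\hat J_{Q,W}}(H^0,T)=\Ext^{-2}_{\hat J_{Q,W}}(H^0,T)=0$ (both arguments lie in the heart), one obtains the canonical isomorphism
$$\Hom_{D^b(\hat\Gamma_{Q,W})}(\hat\Gamma_{\un k,\lambda},T[-1])\ \cong\ \Hom_{\hat J_{Q,W}}(H^1(\hat\Gamma_{\un k,\lambda}),T).$$
Consequently $\tilde u$ is uniquely determined by the surjection $H^1(\tilde u)$, and modding out by $G_\gamma\cong\Aut(T)$ identifies the orbit of $(E,u)$ with the kernel $K:=\ker H^1(\tilde u)\subset H^1(\hat\Gamma_{\un k,\lambda})$, whose quotient has dimension vector $\Phi(r)^{-1}[1](\gamma)$. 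This is exactly a point of $\Gr(H^1(\hat\Gamma_{\un k,\lambda}),\Phi(r)^{-1}[1](\gamma))$, giving the desired bijection on closed points.

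To promote this to an isomorphism of schemes, one runs both constructions in flat families: the derived equivalence $\Phi(r)^{-1}$ comes from a derived bimodule and the $\Hom$-computation globalizes, yielding an algebraic morphism from $\cM^{sp,sfr}_{\gamma,<(\pi-\phi_r),\lambda}$ to the quiver Grassmannian; conversely, the universal subrepresentation over the Grassmannian produces an inverse morphism by applying $\Phi(r)$ in families to the corresponding family of maps $\tilde u$. The hypothesis that $\cM^{sp,sfr}_{\gamma,<(\pi-\phi_r),\lambda}$ is a union of connected components of $\Crit((W_r)_\gamma)$ endows the left-hand side with the canonical scheme structure of the critical locus, which is precisely what is needed to conclude that the two morphisms are mutual inverses scheme-theoretically; projectivity then follows at once from the projectivity of quiver Grassmannians. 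The main obstacle is this last families step: the set-theoretic bijection is routine, but verifying that the scheme structure on $\cM^{sp,sfr}$ inherited from $\Crit$ matches the standard scheme structure on the quiver Grassmannian is exactly where the critical-components hypothesis becomes essential, and requires some care in handling the derived functor $\Phi(r)^{-1}$ in flat families over the Grassmannian.
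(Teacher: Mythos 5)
Your proposal follows essentially the same route as the paper: the paper's proof simply observes that the bijection on closed points was already established in the proof of Theorem \ref{LP_by_sfr} (via $\Phi(r)^{-1}$, the identification $\Hom(\hat\Gamma_{\un k,\lambda},\Phi(r)^{-1}(E))\cong\Hom_{\hat J_{Q,W}}(H^1(\hat\Gamma_{\un k,\lambda}),\Phi(r)^{-1}(E)[1])$ and surjectivity of $H^1$ of the framing) and then asserts that it is induced by an isomorphism of schemes, which is exactly the structure of your argument, only spelled out in more detail. Minor cosmetic points (the truncation triangle should use $\tau_{\leq 0}\hat\Gamma_{\un k,\lambda}$ rather than $H^0$, and quotienting by the free $G_\gamma$-action gives isomorphism classes of framed pairs rather than literally $G_\gamma\cong\Aut(T)$) do not affect the correctness of the approach.
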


\begin{proof}We have already seen in the proof of Theorem \ref{LP_by_sfr} that there is a bijection
between closed points of these schemes. It is easy to show that it is induced by an isomorphism of schemes.\end{proof}

\begin{remark} In \cite{P1}, Plamondon obtains a general formula for cluster monomials in commutative cluster algebras.
The same formulas are actually obtained in \cite{DWZ2}, the coincidence is shown in \cite{P2}.
The resulting coefficients are Euler characteristics of some quiver Grassmannians. By Corollary \ref{fin_dim_of_H^1},
these quiver Grassmannians are precisely $\Gr(H^1(\hat{\Gamma}_{\un{k},\lambda}),\Phi(r)^{-1}[1](\gamma)).$
On the other hand, the Euler characteristics of critical cohomology (under the assumption of Proposition \ref{projectivity}) coincides with Behrend's weighted Euler characteristics \cite{Be}
$$\tilde{\chi}(\cM_{\gamma,<(\pi-\phi_r),\lambda}^{sp,sfr})=\tilde{\chi}(\Gr(H^1(\hat{\Gamma}_{\un{k},\lambda}),\Phi(r)^{-1}[1](\gamma))),$$
where for any scheme $X$ the weighted Euler characteristics is defined using constructible function (Behrend's function)
$$\nu_X:X\to\Z.$$ This motivates the following question.
\end{remark}

{\noindent{\bf Question. }}{\it Let $(Q,W)$ be a formal QP, and $M\in\rmmod\hat{J}_{Q,W}$ is such that the decorated representation $(M,0)$
has $E$-invariant zero (see \cite{DWZ},\cite{P2}). Is it true that for all $\gamma\in\Z_{\geq 0}^{V(Q)}$ the Behrend's function
$$\nu:\Gr(E,\gamma)\to\Z$$
is identically equal to $1$?}

\begin{prop}\label{purity_of_MHM}In the assumptions of Theorem \ref{LP_by_sfr}, suppose that for all $\gamma$ and for generic
$W_r$ (possibly depending on $\gamma$) the subvariety $$\cM_{\gamma,<(\pi-\phi_r),\lambda}^{sp,sfr}\subset \cM_{\gamma,<(\pi-\phi_r),\lambda}^{sfr}$$
is a union of connected components of $Crit((W_r)_{\gamma}),$ and assume that the restriction of
$$\phi_{\frac{(W_r)_{\gamma}}u}\Q_{(\cM_{\gamma,\pi-\phi_r,\lambda}^{sfr})\times G_m}(0)[\sum\limits_{j=1}^m\lambda^i\gamma^i-\chi_{Q_r}(\gamma,\gamma)+1]$$
onto $\cM_{\gamma,\pi-\phi_r,\lambda}^{sp,sfr}$ is a pure Hodge module of weight $\sum\limits_{j=1}^m\lambda^i\gamma^i-\chi_{Q_r}(\gamma,\gamma)+1.$
Then the critical cohomology $H^{i,crit}_c(\cM_{\gamma,\pi-\phi_r,\lambda}^{sp,sfr},(W_r)_{\gamma})$ is pure of weight $i,$
and the graded object $H^{\bullet,crit}_c(\cM_{\gamma,\pi-\phi_r,\lambda}^{sp,sfr},(W_r)_{\gamma})$ admits a Lefschetz operator
centered at $\sum\limits_{j=1}^m\lambda^i\gamma^i-\chi_{Q_r}(\gamma,\gamma).$

In particular, in this case the Conjecture \ref{conj_Lefschetz} holds for $M_r(\lambda).$
\end{prop}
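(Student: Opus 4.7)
The plan is to reduce both assertions to Saito's theorems --- the decomposition theorem and relative hard Lefschetz --- for pure Hodge modules along a projective morphism, applied to the projection from $X^{sp} := \cM^{sp,sfr}_{\gamma,<(\pi-\phi_r),\lambda}$ to a point. The only geometric input we need is the projectivity of $X^{sp}$, and this is exactly what Proposition \ref{projectivity} furnishes: the hypothesis that $X^{sp}$ is a union of connected components of $\Crit((W_r)_\gamma)$ identifies $X^{sp}$ with the quiver Grassmannian $\Gr(H^1(\hat{\Gamma}_{\un{k},\lambda}), \Phi(r)^{-1}[1](\gamma))$, which is projective.

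Next, set $d := \sum_i \lambda^i\gamma^i - \chi_{Q_r}(\gamma,\gamma) = \dim \cM^{sfr}_{\gamma,<(\pi-\phi_r),\lambda}$ and let $\cF$ denote the restriction to $X^{sp}\times G_m$ of the shifted vanishing cycles complex $\phi_{(W_r)_\gamma/u}\Q_{\cM^{sfr}\times G_m}(0)[d+1]$, which by hypothesis is a pure Hodge module of weight $d+1$. Unfolding Definition \ref{critical_cohomology}, the graded object $H^{\bullet,crit}_c(X^{sp},(W_r)_\gamma) \in MMHS$ is computed by pushing $\cF$ forward along
$$X^{sp}\times G_m \xrightarrow{p} G_m \xrightarrow{j} \A^1,$$
together with the shifts $[d+1]$, $[1]$ and the overall $H^{i+1}$ from the definition, and using the identification $MMHS \subset MHM_{\A^1}$. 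Because $X^{sp}$ is projective, $p$ is proper, and the proper pushforward agrees with the ordinary one.

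Now we apply Saito's theorems. The decomposition theorem gives that $p_*\cF$ is pure of weight $d+1$, so each perverse cohomology sheaf ${}^pH^i(p_*\cF)$ on $G_m$ is pure of weight $d+1+i$. Unwinding the shifts above converts this exactly into the assertion that $H^{i,crit}_c(X^{sp},(W_r)_\gamma) \in MMHS$ is pure of weight $i$. Similarly, an ample class on $X^{sp}$ provides, via Saito's relative hard Lefschetz theorem, cup-product isomorphisms ${}^pH^{-k}(p_*\cF) \xrightarrow{\sim} {}^pH^k(p_*\cF)(k)$ for every $k \geq 0$. Transporting these through $j_!$ and the shifts produces a morphism $L : H^{\bullet,crit}_c \to H^{\bullet,crit}_c(1)[2]$ whose iterates induce isomorphisms $L^k : H^{d-k,crit}_c \xrightarrow{\sim} H^{d+k,crit}_c(k)$; this is precisely the Lefschetz property centered at $d$. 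The final claim about Conjecture \ref{conj_Lefschetz} then follows directly from Corollary \ref{purity->positivity}.

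The principal obstacle we anticipate is bookkeeping rather than substance. Definition \ref{critical_cohomology} involves an auxiliary $G_m$-factor, a Thom--Sebastiani-type construction, and the identification of $MMHS$ as a full subcategory of $MHM_{\A^1}$, so matching ``pure of weight $d+1$'' on $X^{sp}$ with ``pure of weight $i$'' in $MMHS$ requires careful tracking of $[d+1]$, $[1]$, Tate twists, and the passage from perverse to ordinary cohomology on the one-dimensional base $G_m$. A minor secondary point is checking that the $G_m$-direction does not interfere with purity under $p_*$, but this is standard since $\cF$ is, by construction, a pullback along $X^{sp} \times G_m \to X^{sp}$ twisted by an admissible variation in the $G_m$-direction. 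Apart from these verifications, no idea beyond Saito's standard machinery is required.
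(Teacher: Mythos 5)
Your proposal is correct and follows essentially the same route as the paper: the paper's proof likewise uses the projectivity of $\cM_{\gamma,<(\pi-\phi_r),\lambda}^{sp,sfr}$ (via Proposition \ref{projectivity}) and then invokes Saito's theory --- preservation of weights under proper direct image for the purity statement, and relative hard Lefschetz for a projective pushforward of a pure Hodge module for the Lefschetz operator --- citing \cite{PS}. The extra bookkeeping you describe (shifts, the $G_m$-factor, and the $MMHS$ weight convention) is exactly what the paper leaves implicit, and your handling of it is consistent with Definition \ref{critical_cohomology}.
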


\begin{proof}Indeed, by the general theory of M. Saito, the direct image for a proper morphisms preserves weights (for objects of the derived category), and the cohomology of the direct image by a projective morphism of a pure Hodge module admits a Lefschetz operator, see \cite{PS}.\end{proof}

\begin{theo}\label{positivity_for_acyclic}In the assumptions of Theorem \ref{LP_by_sfr}, suppose that either $Q_{[1,\dots,n]}$ or $(Q_r)_{[1,\dots,n]}$ is acyclic quiver.
Then for any $\lambda\in\Z_{\geq 0}^m,$ $\gamma\in\Z_{\geq 0}^n,$ we have that
the assumptions of Proposition \ref{purity_of_MHM} hold. In particular, Conjecture \ref{conj_Lefschetz} holds for all the cluster monomials
$M_r(\lambda).$\end{theo}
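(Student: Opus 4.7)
The task is to verify the two hypotheses of Proposition \ref{purity_of_MHM}: that $\cM_{\gamma,<(\pi-\phi_r),\lambda}^{sp,sfr}$ is a union of connected components of $\Crit((W_r)_\gamma)$ for generic $W_r$, and that the restriction of the appropriately shifted vanishing-cycle sheaf is a pure Hodge module. I treat the two acyclicity cases in parallel and indicate where they diverge.

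\textbf{Mutated-seed acyclic case.} Because $\gamma \in \Z_{\geq 0}^n$ is supported on cluster vertices and $(Q_r)_{[1,\dots,n]}$ has no oriented cycle, every cyclic path contributing to $W_r$ either lies entirely in $(Q_r)_{[1,\dots,n]}$ (impossible) or passes through a coefficient vertex where the $\gamma$-representation is zero; in both cases the trace vanishes. Thus $(W_r)_\gamma \equiv 0$, so $\Crit((W_r)_\gamma) = M_\gamma$, and any such representation of $(Q_r)_{[1,\dots,n]}$ is automatically nilpotent. Hence $\cM^{sp,sfr} = \cM^{sfr}$ is trivially a union of connected components of the critical locus. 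For hypothesis (b), dimensional reduction identifies the vanishing-cycle sheaf of the zero function with the constant sheaf on $\cM^{sfr}$ up to a Tate twist and shift. Since $\cM^{sfr}$ is smooth (as a free quotient of a smooth variety, by Proposition \ref{conj_and_sfr}) and projective (by Proposition \ref{projectivity}), the constant sheaf is pure, and the Lefschetz operator is supplied by hard Lefschetz.

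\textbf{Initial-seed acyclic case.} Take $W = 0$ on $Q$; the restricted algebra $\C Q_{[1,\dots,n]}$ is then hereditary. Granting hypothesis (a) momentarily, Proposition \ref{projectivity} identifies $\cM_{\gamma,<(\pi-\phi_r),\lambda}^{sp,sfr}$ as a scheme with the quiver Grassmannian $\Gr(M, \gamma')$, where $M := H^1(\hat{\Gamma}_{\underline{k},\lambda})$ and $\gamma' := \Phi(r)^{-1}[1](\gamma)$ is supported on $\{1,\dots,n\}$ by \eqref{eq:inclusion}. Combining Corollary \ref{fin_dim_of_H^1}, which realizes the decorated representation $(M,0)$ via inverse mutations of trivial decorated ones, with the preservation of the $E$-invariant and hence of rigidity under mutation in the $W = 0$ setting, one obtains $\Ext^1(M,M) = 0$ over the path algebra controlling the support of $M$. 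The long exact sequences of $\Ext$ applied to $0 \to N \to M \to M/N \to 0$, together with the vanishing of $\Ext^{\geq 2}$ in the hereditary case, then force $\Ext^1(N, M/N) = 0$ for every $N \subset M$ of dimension vector $\gamma'$, so the quiver Grassmannian is smooth. Together with the projectivity from Proposition \ref{projectivity}, $\cM^{sp,sfr}$ is thus smooth and projective.

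It remains to establish hypothesis (a) and sheaf-level purity when $W_r \not\equiv 0$. The plan is to show that for a generic polynomial representative of the mutation class of $(Q_r, W_r)$, the Hessian of $(W_r)_\gamma$ at every point of the smooth locus $\cM^{sp,sfr}$ has kernel equal to the tangent space to $\cM^{sp,sfr}$ and restricts to a non-degenerate quadratic form on the normal bundle in $\cM^{sfr}$. The $3$-Calabi-Yau structure on $\cC_{Q_r,W_r}$, combined with Theorem \ref{Kajiura}'s decomposition of the $A_\infty$-structure into a minimal and a trivial part, identifies this normal Hessian with the quadratic form $Q_A$ on $A^1/\ker(m_1)$ attached to $A = \End^\bullet(S_\gamma, \alpha)$; a Bertini-type genericity argument in the affine space of polynomial potentials of bounded degree then guarantees non-degeneracy of $Q_A$ for generic $W_r$. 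With this in hand, the holomorphic Morse lemma makes the restriction of $\phi_{(W_r)_\gamma/u}\Q$ to $\cM^{sp,sfr}$ a Tate twist of the constant sheaf — pure of the prescribed weight — and forces $\cM^{sp,sfr}$ to be a union of connected components of $\Crit((W_r)_\gamma)$. The Lefschetz operator then follows from hard Lefschetz on the smooth projective variety $\cM^{sp,sfr}$. The main obstacle is the Hessian non-degeneracy for generic $W_r$ in the initial acyclic case; everything else reduces to standard facts about quiver Grassmannians of rigid modules over hereditary algebras.
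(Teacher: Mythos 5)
Your mutated--seed case is essentially the paper's argument and is fine: for $\gamma\in\Z_{\geq 0}^n$ every cycle of $W_r$ meets a frozen vertex, so $(W_r)_\gamma\equiv 0$, the vanishing--cycle object is the (shifted, twisted) constant sheaf on the smooth projective variety $\cM^{sfr}_{\gamma,<(\pi-\phi_r),\lambda}$, and purity plus hard Lefschetz follow. The problem is the initial--seed acyclic case, where your argument has a genuine gap exactly at the step you yourself flag as ``the main obstacle.'' Hypothesis (b) of Proposition \ref{purity_of_MHM} requires controlling the transverse singularity type of $(W_r)_\gamma$ along $\cM^{sp,sfr}$; smoothness and projectivity of the quiver Grassmannian (your rigidity/hereditary detour, which the paper does not need at all) say nothing about this, and your proposed ``Bertini-type genericity argument'' for nondegeneracy of the normal Hessian is not a proof and cannot be made one in this form: the admissible potentials $W_r$ are not arbitrary polynomials on $Q_r$ but mutations of potentials $W$ on $Q$ whose restriction to $Q_{[1,\dots,n]}$ is forced to vanish, the critical locus moves with the potential, and the paper's own $3$-cycle example shows that for a perfectly generic potential of a non-acyclic seed the transverse Hessian degenerates and the vanishing-cycle module fails to be pure. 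So genericity alone is not the mechanism; acyclicity of the \emph{initial} seed must enter the local analysis of the potential, which in your proposal it never does.

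The paper's route closes precisely this gap without any genericity of the Hessian. You already invoke Theorem \ref{Kajiura}: note that in that splitting $W_{A}=W^{\min}\oplus Q_A\oplus N_A$ the quadratic form $Q_A$ on $A^1/\ker(m_1)$ is nondegenerate \emph{by construction} (it is induced by the perfect cyclic pairing), so the only thing to prove is that the minimal potential $W^{\min}$ on $\Ext^1(E,E)$ vanishes. This is where acyclicity is used: by Proposition \ref{3CYlifting} the equivalences $\Phi_{k,\pm}$ are induced by cyclic $A_\infty$-functors, so $W^{\min}$ can be computed on the $(Q,W)$-side, where $\Phi(r)^{-1}[1](E)$ is supported on $Q_{[1,\dots,n]}$ by \eqref{eq:inclusion}; since every monomial of $W$ passes through a frozen vertex, the potential function on these dimension vectors is identically zero, hence $W^{\min}=0$, and Kajiura's theorem gives that $(W_r)_\gamma$ is locally analytically a nondegenerate quadratic form of even rank (parity by Remark \ref{parity_for_mutations}), which yields both hypotheses of Proposition \ref{purity_of_MHM}; hypothesis (a) for an arbitrary admissible $W_r$ is then obtained from Proposition \ref{Bertini} plus the observation that all such $W_r$ have equivalent restrictions to $(Q_r)_{[1,\dots,n]}$. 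Two further corrections: you are not free to ``take $W=0$ on $Q$'' --- the theorem is stated for a given $(Q,W)$ that is well-mutatable along $\un{k}$ with $W_r$ polynomial, and killing $W$ may destroy well-mutatability and changes $W_r$ and the moduli spaces; and your claim that the local quadratic model ``forces'' $\cM^{sp,sfr}$ to be a union of components of $\Crit((W_r)_\gamma)$ is circular as written, since it presupposes the unproven local model.
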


\begin{proof}First, consider the case when the quiver $(Q_r)_{[1,\dots,n]}$ is acyclic. Then for $\gamma\in\Z_{\geq 0}^n\subset\Z_{\geq 0}^m$ we have $(W_r)_{\gamma}=0,$ hence
$$\cM_{\gamma,\pi-\phi_r,\lambda}^{sp,sfr}=\cM_{\gamma,\pi-\phi_r,\lambda}^{sfr},$$
and we have that
\begin{multline*}\phi_{\frac{(W_r)_{\gamma}}u}\Q_{(\cM_{\gamma,\pi-\phi_r,\lambda}^{sfr})\times G_m}(0)[\sum\limits_{j=1}^m\lambda^i\gamma^i-\chi_{Q_r}(\gamma,\gamma)+1]=\\\Q_{(\cM_{\gamma,\pi-\phi_r,\lambda}^{sfr})\times G_m}(0)[\sum\limits_{j=1}^m\lambda^i\gamma^i-\chi_{Q_r}(\gamma,\gamma)+1],\end{multline*}
is certainly a pure Hodge module. The resulting critical cohomology is just the cohomology of a smooth projective variety, which is Hodge-Tate by \eqref{Tate_type}.

Now, consider the case when $Q_{[1,\dots,n]}$ is acyclic.
First, we claim that the variety $M_{\gamma}^{sp}$ is always a union of connected components of $Crit((W_r)_{\gamma})\cap (W_r)_{\gamma}^{-1}(0),$
if $\gamma\in\Z_{\geq 0}^n\subset\Z_{\geq 0}^m.$
Indeed, this holds for generic polynomial potential $W_r$ by Proposition \ref{Bertini}. Further, since there are no non-trivial potentials on $Q_{[1,\dots,n]},$ all possible potentials $W_r$ are equivalent when restricted to $(Q_r)_{[1,\dots,n]}.$ Hence, the corresponding
functions $(W_r)_{\gamma}$ are obtained from each other by an automorphism of the formal neighborhood of $Nilp_{\gamma}\subset M_{\gamma}.$
It follows that for any possible potential $M_{\gamma}^{sp}$ is a union of connected components of $Crit((W_r)_{\gamma})\cap W_{\gamma}^{-1}(0).$

 We have that
locally at each point $p\in \cM_{\gamma,\pi-\phi_r,\lambda}^{sp,sfr}$ the function $W_{\gamma}$
is analytically equivalent to a quadratic form of even rank
$$\chi_{Q}(\Phi(r)^{-1}[1](\gamma),\Phi(r)^{-1}[1](\gamma))-\chi_{Q_r}(\gamma,\gamma).$$
Indeed, if the point $p$ is given by $(E,u),$ then the minimal potential on $$\Ext^1(\Phi(r)^{-1}[1](E),\Phi(r)^{-1}[1](E))\cong\Ext^1(E,E)$$
equals to zero by Theorem \ref{Kajiura}, and from the same Theorem it follows that $W_{\gamma}$ in the neighborhood of $p$ is formally, hence analytically,
equivalent to a quadratic form of rank as in the above formula. The rank is even by Remark \ref{parity_for_mutations}.

Therefore, the restriction of the mixed Hodge module $\phi_{\frac{(W_r)_{\gamma}}u}\Q_{(\cM_{\gamma,\pi-\phi_r,\lambda}^{sfr})\times G_m}(0)[\sum\limits_{j=1}^m\lambda^i\gamma^i-\chi_{Q_r}(\gamma,\gamma)+1]$ onto $\cM_{\gamma,\pi-\phi_r,\lambda}^{sp,sfr}\times G_m$
is a pure Hodge module of weight $\sum\limits_{j=1}^m\lambda^i\gamma^i-\chi_{Q_r}(\gamma,\gamma).$\end{proof}

\begin{remark}In the case when $Q_{[1,\dots,n]}$ is acyclic, the positivity conjecture (and also Conjecture \ref{conj_Lefschetz}) has already been shown by F. Qin \cite{Q}. He interprets quantum cluster monomials via Serre polynomials of quiver Grassmannians, and these quiver Grassmannians
are actually $\Gr(H^1(\hat{\Gamma}_{\un{k},\lambda}),\Phi(r)^{-1}[1](\gamma)).$ The cohomology of these Hodge-Tate smooth projective varieties
actually coincide (up to a twist and a shift) with our critical cohomology.\end{remark}

The following conjecture was suggested to me by M. Kontsevich.

\begin{conj}\label{purity_conj}In the above notation, for the generic potential $W_r$ (possibly depending on $\gamma$) the
critical cohomology
$H^{i,crit}_c(\cM_{\gamma,\pi-\phi_r,\lambda}^{sp,sfr},(W_r)_{\gamma})$
is pure of weight $i,$ and admits a Lefschetz operator.\end{conj}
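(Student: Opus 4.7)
The plan is to exploit Proposition \ref{purity_of_MHM}, which already packages the reduction: both purity of the critical cohomology and the existence of a Lefschetz operator follow from the single statement that, for generic $W_r$, (i) the subvariety $\cM^{sp,sfr}_{\gamma,<(\pi-\phi_r),\lambda}$ is a union of connected components of $\Crit((W_r)_\gamma)$, and (ii) the restriction to $\cM^{sp,sfr}$ of the vanishing cycle mixed Hodge module on $\cM^{sfr}\times G_m$ (with the appropriate shift and twist) is a pure Hodge module of the predicted weight. So the whole problem collapses to establishing (i) and (ii).

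For (i), I would adapt the Bertini argument of Proposition \ref{Bertini}. On the unframed moduli, that argument shows $\Nilp_\gamma\cap\Crit(W_\gamma)=\Crit(W_\gamma)\cap W_\gamma^{-1}(0)$ for a dense open set of polynomial potentials of bounded degree; the same kind of trace-of-cycles linear system on $M_{(\gamma,1)}$ together with openness of stability should transfer the conclusion to $\cM^{sfr}$. Here one must check carefully that framing does not introduce critical points of $(W_r)_\gamma$ lying outside $\cM^{sp,sfr}$ but still inside the fiber over zero; this should follow from the fact that the framing variable contributes linearly to the trace potential and the stability condition on the quotient is both open and HN-filtration based.

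For (ii), which is the main obstacle, I would pursue the strategy outlined in Section \ref{s:conj_on_exc}: produce a full exceptional collection in the Karoubian closure of the homotopy category of matrix factorizations $\mathrm{MF}(\cM^{sfr}_{\gamma,<(\pi-\phi_r),\lambda},(W_r)_\gamma)$. By the standard dictionary between categories of matrix factorizations and vanishing cycle cohomology (the Kontsevich--Soibelman--Preygel framework, relating Hochschild/periodic cyclic invariants of $\mathrm{MF}$ to $H^{\bullet,\mathrm{crit}}$), the presence of such an exceptional collection forces the critical cohomology to decompose as a direct sum of Tate pieces in the correct degrees, which is exactly purity together with the Tate-type statement \eqref{Tate_type}. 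Concretely, I would look for a decomposition of $\cM^{sfr}$ compatible with the Harder--Narasimhan strata (or an iterated wall-crossing by the central charge $Z_r'$), aiming to build the generators of the exceptional collection from line bundles pulled back along the stratum maps, in the spirit of Nakajima's treatment for quiver varieties and Qin's constructions in the acyclic case (which gives the base case).

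Once purity is secured, the Lefschetz operator is essentially automatic: by Proposition \ref{projectivity}, $\cM^{sp,sfr}$ is projective, so applying Saito's hard Lefschetz theorem for pure Hodge modules under the proper direct image along $\cM^{sp,sfr}\to\mathrm{pt}$ produces the Lefschetz morphism on $H^{\bullet,\mathrm{crit}}_c$, centered at the weight $\sum_i \lambda^i\gamma^i-\chi_{Q_r}(\gamma,\gamma)$ predicted by Proposition \ref{purity_of_MHM}. The hard part is therefore exclusively step (ii): constructing the exceptional collection requires a genuinely geometric understanding of the moduli of stable framed representations that goes beyond the formal/homological machinery used elsewhere in the paper, and it is precisely this step that appears to be the central open problem the author leaves for future work.
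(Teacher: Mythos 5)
The statement you are addressing is Conjecture \ref{purity_conj}; the paper does not prove it, and your text is not a proof either --- it is a programme whose decisive step you yourself flag as open. Beyond that, two of your reductions are genuinely problematic. First, routing everything through Proposition \ref{purity_of_MHM} requires that the restriction of the vanishing-cycle mixed Hodge module $\phi_{\frac{(W_r)_\gamma}{u}}\Q_{(\cM^{sfr}_{\gamma,\pi-\phi_r,\lambda})\times G_m}(0)[\,\cdot\,]$ to $\cM^{sp,sfr}_{\gamma,\pi-\phi_r,\lambda}$ be a \emph{pure} Hodge module. The paper explicitly exhibits a counterexample to exactly this hypothesis (the $3$-cycle quiver with the mutation sequence $\un{k}=(1,2,3,1)$ and $\lambda=\gamma=(1,1,1,0,\dots,0)$): there the weight filtration of $\phi_f\Q_X[3]$ has three nontrivial subquotients, so the MHM is not pure, even though $\cM^{sp,sfr}=\Crit(f)\cap f^{-1}(0)$ and the critical cohomology happens to be pure. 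Hence condition (ii) in your plan is strictly stronger than the conjecture and is false in general; the sufficient criterion of Proposition \ref{purity_of_MHM} is only used in the paper in the acyclic cases (Theorem \ref{positivity_for_acyclic}), where $(W_r)_\gamma$ is zero or locally a nondegenerate even-rank quadratic form.

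Second, your claim that the Lefschetz operator is ``essentially automatic'' once purity of the cohomology groups is known is a gap. Saito's hard Lefschetz produces the operator on the cohomology of the \emph{proper direct image of a pure Hodge module}; purity of the individual groups $H^{i,crit}_c$ together with projectivity of $\cM^{sp,sfr}$ (Proposition \ref{projectivity}, which moreover itself assumes the component condition (i)) does not by itself yield a Lefschetz operator when the underlying MHM on $\cM^{sp,sfr}$ is impure, as in the example above. This is also why the paper states that the exceptional-collection conjecture of Section \ref{s:conj_on_exc} implies only the first half of Conjecture \ref{purity_conj} (purity), not the Lefschetz half. So your plan either relies on a hypothesis known to fail (purity of the MHM) or, via matrix factorizations, addresses only half of the statement, with the construction of the exceptional collection itself being precisely the open problem the paper leaves unresolved.
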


We remark that even if the set $\cM_{\gamma,\pi-\phi_r,\lambda}^{sp,sfr}$ coincides with $Crit((W_r)_{\gamma})\cap(W_r)_{\gamma}^{-1}(0),$ the mixed Hodge module $\phi_{\frac{(W_r)_{\gamma}}u}\Q_{(\cM_{\gamma,\pi-\phi_r,\lambda}^{sfr})\times G_m}(0)[\sum\limits_{j=1}^m\lambda^i\gamma^i-\chi_{Q_r}(\gamma,\gamma)+1]$ is not always pure.

Namely, consider the case $n=3,$ and take the initial quiver $Q$ such that the coefficient-free part $Q_{[1,2,3]}$
is just a cycle of length $3,$ i.e.
$$a_{12}=a_{23}=a_{31}=1, a_{11}=a_{22}=a_{33}=a_{21}=a_{32}=a_{13}=0,$$
with the potential $W,$ when restricted to $Q_{[1,2,3]}$  equal to this cyclic path.
Then, taking the sequence of mutations $\un{k}=(1,2,3,1),$ we obtain the quiver $Q_4,$ for which
$(Q_4)_{[1,2,3]}$ is again a cycle of length $3,$ but in the opposite direction, and the potential $W_4$ (again, restricted to $(Q_4)_{[1,2,3]}$) is again this cyclic path. It is easy to check that the resulting torsion pair in $\rmmod\hat{J}_{Q,W}$ is precisely
$$(\cC_{[1,2,3]},\cC_{[1,2,3]}^{\perp}).$$ This means that for $\gamma\in\Z_{\geq 0}^3\subset\Z_{\geq 0}^m,$ $\lambda\in\Z_{\geq 0}^m$
we have
$$\cM_{\gamma,\pi-\phi_r,\lambda}^{sfr}=\{(E\in M_{\gamma},u:P_{\lambda}\to E)\mid u\text{ is surjective}\}/G_{\gamma}.$$

Let us describe this variety in the case
$$\lambda=\gamma=(1,1,1,0,\dots,0).$$ In this case it is a toric variety
$$X=U/(\C^*)^3,$$
where $U\subset \A^6_{u_1,u_2,u_3,x_{12},x_{23},x_{31}}$ is an open subset
given by
$$(u_1,x_{31})\ne (0,0),\quad (u_2,x_{12})\ne (0,0),\quad (u_3,x_{23})\ne (0,0),\quad (u_1,u_2,u_3)\ne (0,0,0).$$
The action of $(\C^*)^3$ is described by the formula
$$(t_1,t_2,t_3)\cdot (u_1,u_2,u_3,x_{12},x_{23},x_{31})=(t_1u_1,t_2u_2,t_3u_3,t_2x_{12}t_1^{-1},t_3x_{23}t_2^{-1},t_1x_{31}t_3^{-1}).$$
The potential $f:X\to\C$ is given by the formula
$$f=x_{12}x_{23}x_{31}.$$ We have
$$X^{sp}=Crit(f)\cap f^{-1}(0)=Crit(f)=\{x_{23}=x_{31}=0\}\cup\{x_{31}=x_{12}=0\}\cup\{x_{12}=x_{23}=0\}$$
--- a union of three projective lines intersecting at one point $$p=((\C^*)^3\times(0,0,0))/(\C^*)^3.$$

Since the weight filtration is the subject of Betti realization, we can consider the perverse sheaves of $\Q$-vector spaces, and
take the vanishing cycles functor $\phi_f$ without multiplying by $G_m.$ Then, the perverse sheaf $\phi_f\Q_X[3]\in\Perv(X^{sp})$ has
three non-zero subquotients in the weight filtration:
$$\gr^{W}_2\phi_f\Q_X[3]\cong \gr^{W}_4\phi_f\Q_X[3]\cong\Q_p,\quad \gr^{2}_3\phi_f\Q_X[3]\cong IC(X^{sp}),$$
$$\gr^{W}_n\phi_f\Q_X[3]=0\text{ for }n\ne 2,3,4.$$
where $Q_p$ is the skyscraper sheaf at the point $p,$ and $IC(X^{sp})$ is the intersection cohomology (perverse) sheaf.
In particular, the sheaf $\phi_f\Q_X[3]$ is not pure. However, the critical cohomology is pure:
$$H^{2,crit}_c(X^{sp},W)\cong\Q(-1)^{\oplus 2},\quad H^{4,crit}_c(X^{sp},W)\cong\Q(-2)^{\oplus 2},$$
$$H^{n,crit}_c(X^{sp},W)=0\text{ for }n\ne 2,4.$$

\begin{remark}Our example of non-purity of the MHM of vanishing cycles is similar to (and much simpler than) \cite{DS}.
In both cases the weight filtration has three subquotients, two beeing skyscraper sheafs and the third one is the intersection cohomology sheaf on the
subscheme of singularities of the zero fiber.\end{remark}

\section{A conjecture on exceptional collections}
\label{s:conj_on_exc}

In this section we propose a conjecture on existence of exceptional collection in certain $2$-periodic triangulated categories
related to the moduli of stable framed representations.

\begin{defi} A triangulated $k$-linear category $\cC$ is called $2$-periodic if there is a fixed isomorphism of functors
$$\id\cong [2].$$

Let $\cC$ be a $2$-periodic category with finite-dimensional $\Hom$-spaces. An object $E\in\cC$ is called exceptional if
$$\Hom^1(E,E)=0,\quad \Hom^0(E,E)=k.$$

A sequence $(E_1,\dots,E_n)$ of exceptional objects in a $2$-periodic triangulated category $\cC$ is called an exceptional collection if
$$\Hom^{\bullet}(E_i,E_j)=0,\quad i>j.$$ An exceptional collection is called full if it generates the triangulated category $\cC$
\end{defi}

Let $(X,f)$ be a smooth algebraic variety with a regular function. Then one can associate with $(X,f)$ two equivalent $2$-periodic triangulated categories. The first one is the category of singularities \cite{Or1} of the zero fiber $X^0=f^{-1}(0):$
$$D_{sg}(X^0):=D^b_{coh}(X^0)/\Perf(X^0).$$
The second one is the homotopy category of the D($\Z/2$-)G category of matrix factorizations $MF(X,f)$ \cite{Or2}, \cite{PV}, \cite{LP},
\cite{Pos}.

By the theorem of Orlov \cite{Or2}, we have
an equivalence
$$D_{sg}(X^0)\cong\Ho(MF(X,f)).$$
This in particular explains that the category $D_{sg}(X^0)$ is $2$-periodic. In general these (equivalent) categories are not Karoubi complete,
and we consider the Karoubian completion $D_{sg}(X^0)^{\kappa}.$

\begin{conj}Let $X$ be some moduli space of stable framed representations $\cM_{\gamma,\pi-\phi_r,\lambda}^{sfr}$
which arise in Theorem \ref{LP_by_sfr}, and let $f=(W_r)_{\gamma}:X\to\C,$ where $W_r$ is generic polynomial potential. Then
the category $D_{sg}(X^0)^{\kappa}$ admits a full strong exceptional collection.
\end{conj}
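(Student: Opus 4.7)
The plan is to reduce the problem in two stages: first transfer from the singularity category $D_{sg}(X^0)^{\kappa}$ to matrix factorizations and dimensionally reduce to a derived category on a quiver Grassmannian, and then exhibit a cellular exceptional collection there.

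By Orlov's equivalence $D_{sg}(X^0)^{\kappa} \cong \Ho(MF(X,f))^{\kappa}$ I would work with matrix factorizations. Since $X = M^{sfr}_{\gamma,<(\pi-\phi_r),\lambda}/G_{\gamma}$ with a free $G_{\gamma}$-action (Proposition \ref{conj_and_sfr}(1)), this equals $\Ho(MF^{G_{\gamma}}(M^{sfr},W_{\gamma}))^{\kappa}$. I would next apply a dimensional reduction: by Theorem \ref{Kajiura}, at each closed point of $\Crit(W_{\gamma})$ the potential is analytically equivalent to a non-degenerate even-rank quadratic form plus the pullback of a potential on the minimal model $H^1 \cong \Ext^1$, which vanishes on the critical locus by the genericity assumption on $W_r$. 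Provided one can globalize this local splitting to a Morse--Bott structure along $\Crit(W_{\gamma})$, Kn\"orrer periodicity (in the form of Isik--Hirano--Shipman, or Polishchuk--Vaintrob) yields an equivalence
\begin{equation*}
\Ho(MF(X,f))^{\kappa} \cong D^b(\cM^{sp,sfr}_{\gamma,<(\pi-\phi_r),\lambda}),
\end{equation*}
and by Proposition \ref{projectivity} the right-hand side is $D^b$ of the projective quiver Grassmannian $\Gr(H^1(\hat{\Gamma}_{\un{k},\lambda}), \Phi(r)^{-1}[1](\gamma))$.

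Next, I would construct a full strong exceptional collection on this Grassmannian via a cellular (Bia\l ynicki--Birula) decomposition. A generic one-parameter subgroup of a maximal torus of $\Aut H^1(\hat{\Gamma}_{\un{k},\lambda})$ compatible with the framing acts with isolated fixed points (direct-sum decompositions of the representation into indecomposables), and the ascending cells are affine spaces. Structure sheaves of the cell closures, appropriately twisted by equivariant line bundles, should assemble into a full strong exceptional collection following the pattern of Cerulli Irelli--Feigin--Reineke in the rigid case and Qin \cite{Q} in the acyclic case. The compatibility with mutations furnished by Keller--Yang and Nagao (Theorem \ref{Nagao}) would then let one propagate this collection along arbitrary sequences $\un{k}$, starting from the acyclic base case covered by Theorem \ref{positivity_for_acyclic}.

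The main obstacle is the globalization of the Kajiura normal form, which is precisely why this conjecture implies the purity half of Conjecture \ref{purity_conj}: a global Morse--Bott splitting would force the mixed Hodge module of vanishing cycles to be a sum of IC-sheaves supported on the components of $\cM^{sp,sfr}$. In general there is no a priori reason for the formal splittings at different critical points to assemble coherently, so one either needs an explicit global splitting exploiting the GIT stratification of $X$ (perhaps along the lines of Halpern-Leistner's window machinery applied to the Hesselink--Kirwan stratification of $M^{sfr}_{\gamma,<(\pi-\phi_r),\lambda}$ by framed Harder--Narasimhan type), or to bypass the reduction altogether by constructing the exceptional matrix factorizations directly from tautological complexes on the moduli stack. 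A secondary, but also serious, difficulty is verifying that the cellular decomposition of the quiver Grassmannian upgrades from a basis in $K$-theory to a \emph{strong} exceptional collection, which is classical in the Dynkin and acyclic contexts but open in the general mutation-equivalent setting.
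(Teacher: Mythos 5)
The statement you are proving is Conjecture 7.2 of the paper: it is stated as an open conjecture, with no proof given --- the author only remarks that it would imply the purity half of Conjecture \ref{purity_conj} and hence positivity. So there is no argument of the paper to compare yours against, and your text should be judged as a proof attempt on its own; as such it has genuine gaps, which you partly acknowledge but which are not peripheral technicalities --- they are the entire content of the conjecture.

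Concretely, the first reduction already fails as stated. A Kn\"orrer/dimensional-reduction equivalence $\Ho(MF(X,f))^{\kappa}\cong D^b(\cM^{sp,sfr}_{\gamma,<(\pi-\phi_r),\lambda})$ cannot hold on the nose, since the left-hand side is $2$-periodic and the right-hand side is not (at best one could hope for the $2$-periodic folding of $D^b$), and, more importantly, the globalization of Kajiura's pointwise formal splitting that you need is obstructed in general: the paper's own example in Section \ref{s:PCVP} (the $3$-cycle quiver with $\lambda=\gamma=(1,1,1,0,\dots,0)$) exhibits a moduli space of exactly this kind where $\phi_{f}\Q_X[3]$ is \emph{not} pure, its weight filtration mixing two skyscrapers with $IC(X^{sp})$; a global Morse--Bott splitting along $\Crit((W_r)_{\gamma})$ would force purity of that Hodge module, so no such splitting exists there. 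In the same example the critical locus is three projective lines meeting at a point, i.e.\ singular, so the second stage of your plan --- a Bia\l ynicki--Birula cell decomposition of the quiver Grassmannian producing a full strong exceptional collection in $D^b$ --- is also unavailable: the relevant quiver Grassmannians need not be smooth (the hypotheses of Proposition \ref{projectivity} are themselves only assumptions, and even when they hold the identification is with a possibly singular, non-reduced scheme), and outside the acyclic/rigid cases no cellular or exceptional-collection structure on them is known. Finally, your proposed mechanism for the implication ``exceptional collection $\Rightarrow$ purity'' (via a global splitting into $IC$-sheaves) is exactly what the non-purity example rules out; whatever argument establishes that implication must work at the level of the category $D_{sg}(X^0)^{\kappa}$ or its invariants, not via a geometric splitting of the vanishing-cycle complex. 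In short, the proposal is a reasonable heuristic roadmap, but both of its pillars --- the global Kn\"orrer reduction and the cellular collection on the reduced space --- are either false in the stated generality or open, so the conjecture remains unproved by it.
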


This conjecture implies the purity conjecture, i.e. the first half of Conjecture \ref{purity_conj}, and hence the positivity conjecture.

\section{Appendix}
\label{s:Appendix}

In this Appendix we prove Theorem \ref{DT_for_formal}.

Let us consider the following situation. Let $A$ be a $3$CY algebra, i.e. an $A_{\infty}$-algebra with scalar product
of degree $3.$ Let us assume that
$$m_n=0\text{ for }n> N.$$
Then the potential defined above is the well-defined polynomial function,
$$W_A:A^1\to\C.$$
The set of its critical points is precisely the set of Maurer-Cartan solutions:
$$Crit(W_A)=\{dW_A=0\}=\{\sum\limits_{n=1}^N m_n(\alpha,\dots,\alpha)=0\}=MC(A)\subset A.$$
The points of this set are objects of the $3$CY $A_{\infty}$-category, which we denote by $\cM\cC(A)_{\infty}$ (and which is defined exactly as
 the $3$CY $A_{\infty}$-category of twisted complexes).

Suppose that we are given with some full $A_{\infty}$ subcategory $\cC\subset \cM\cC(A)_{\infty},$ such that

$(\star)$ the subset $Ob(\cC)\subset Ob(\cM\cC(A)_{\infty})\cap W_A^{-1}(0)$ is constructible.

We would like to define the class
$$[D(H^{\bullet,crit}_c(\cC,W_A))].$$ This is done as follows. Choose some constructible subset
$$\bigsqcup\limits_{i=1}^m Y_i\subset Ob(\cC),$$ with locally closed $Y_i,$ such that:

$(i)$ Each isomorphism class of objects in $\Ho(\cC)$ contains exactly one representative in $\bigsqcup\limits_{i=1}^m Y_i;$

$(ii)$ For a fixed $1\leq i\leq m,$ and for $X\in Y_i,$ we have
$$\Hom^0_{\Ho(\cC)}(X,X)/J(X)\cong\bigoplus_{j=1}^t \Mat_{n_j}(\C),$$
where $J(X)$ is the Jacobson radical of the algebra $\Hom^0_{\Ho(\cC)}(X,X),$ and the numbers $t,$ $n_1,\dots,n_t,$ and $\dim J(X)$ do not depend on $X\in Y_i.$
In particular, the class $[H^{\bullet}_c(\Aut_{\Ho(\cC)}(X))]\in K_0(MHS)$ does not depend on $X\in Y_i.$

This allows us to define the class
\begin{equation}\label{H^crit}[D(H^{\bullet,crit}_c(\cC,W_A))]=\sum\limits_{i=1}^m\frac{[D(H^{\bullet,crit}(Y_i,W_A))]}{[D(H^{\bullet}_c(\Aut_{\Ho(\cC)}(X_i)))]}
\in \hat{R},\end{equation}
where $X_i\in Y_i$ is some object.

For convenience, for the finite-dimensional graded vector space $V,$ put
$$\chi_{\leq d}(V)=\sum\limits_{i\leq d}(-1)^i\dim V^i.$$

\begin{prop}\label{well-defined_class}The class \eqref{H^crit} does not depend on the choice of the constructible subset
$\bigsqcup\limits_{i=1}^m Y_i\subset Ob(\cC).$\end{prop}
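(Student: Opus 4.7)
The plan is to establish the equality of the two expressions via an intermediate correspondence construction. The first step is to verify additivity: if $Y_i = \bigsqcup_k Y_{i,k}$ is further decomposed into locally closed subsets on which (i) and (ii) still hold (so that the denominator $[D(H^\bullet_c(\Aut_{\Ho(\cC)}(X_i)))]$ is unchanged), then the $i$-th term of the sum in \eqref{H^crit} decomposes correspondingly. This follows from the closed-open distinguished triangle for compactly supported cohomology, combined with the exactness of $\phi_{f/u}$ on the category of mixed Hodge modules, which together give additivity of $[D(H^{\bullet,crit}_c(-,W_A))]$ over constructible stratifications.

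Given two decompositions $\bigsqcup_i Y_i$ and $\bigsqcup_j Y'_j$ satisfying (i) and (ii), for each pair $(i,j)$ I would let $Y_{ij} \subset Y_i$ (respectively $Y'_{ji} \subset Y'_j$) denote the subset of points whose $\Ho(\cC)$-isomorphism class has a representative in $Y'_j$ (respectively $Y_i$). Under the hypothesis $m_n = 0$ for $n > N$, an $A_\infty$-isomorphism from $\alpha$ to $\beta$ is encoded by a finite collection of polynomial components $(f_1, \dots, f_N)$ with $f_1$ invertible at the level of $H^0$, satisfying finitely many polynomial identities; the existential projection of this algebraic condition is constructible, so both $Y_{ij}$ and $Y'_{ji}$ are constructible. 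After one more refinement (absorbed by the first step), I may assume each $Y_{ij}$, $Y'_{ji}$ still satisfies (i) and (ii).

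The core step is to introduce the variety of isomorphisms
\[
Z_{ij} \;=\; \{(x, x', \phi) \mid x \in Y_{ij},\ x' \in Y'_{ji},\ \phi \colon x \isomoto x' \text{ in } \Ho(\cC)\},
\]
which is again algebraic and constructible by the same argument. The two projections $p \colon Z_{ij} \to Y_{ij}$ and $p' \colon Z_{ij} \to Y'_{ji}$ are surjective, and on fibers the groups $\Aut_{\Ho(\cC)}(x)$, $\Aut_{\Ho(\cC)}(x')$ act freely and transitively, so both $p$ and $p'$ are torsors under the corresponding affine algebraic automorphism group. After a further constructible refinement these torsors become Zariski-locally trivial, yielding
\[
[D(H^{\bullet,crit}(Z_{ij}, W_A\circ p))] = [D(H^{\bullet,crit}(Y_{ij}, W_A))] \cdot [D(H^\bullet_c(\Aut_{\Ho(\cC)}(X_i)))],
\]
and symmetrically via $p'$ with $Y'_{ji}$ in place of $Y_{ij}$ and $X'_j$ in place of $X_i$. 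Since $\Aut_{\Ho(\cC)}(X_i)$ and $\Aut_{\Ho(\cC)}(X'_j)$ are isomorphic (they represent automorphisms of the same isomorphism class), their cohomology classes agree, so the two ratios in \eqref{H^crit} coincide piece by piece; summing over $(i,j)$ gives the proposition.

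The main obstacle is the rigorous construction of $Z_{ij}$ as an algebraic variety and the verification that $p$ and $p'$ are Zariski-locally trivial torsors in the $A_\infty$ setting. The bound $m_n = 0$ for $n > N$ turns $A_\infty$-isomorphism data into a bounded tower of polynomial conditions, so constructibility and the correct dimension of fibers are available, while local triviality of torsors under affine algebraic groups (after further constructible refinement of the base) is standard. Once these geometric inputs are in place, the rest is bookkeeping via additivity and the projection formula.
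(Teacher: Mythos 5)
There is a genuine gap, and it sits exactly at the point your argument passes over in silence. Your correspondence $Z_{ij}$ and the two torsor identities only handle the automorphism-group bookkeeping; for the "piece by piece" conclusion you need the left-hand sides of the two identities to agree, i.e. that the class computed on $Z_{ij}$ via $p$ equals the class computed via $p'$. Note that $W_A$ vanishes identically on $Ob(\cC)$ by condition $(\star)$, so "$W_A\circ p$" and "$W_A\circ p'$" carry no information as functions; what your two formulas actually involve are the pullbacks along $p$ and along $p'$ of the vanishing-cycle complex of $W_A$ (taken in the ambient affine space $A^1$) restricted to $Y_{ij}$ and to $Y'_{ji}$ respectively. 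These are two different objects on $Z_{ij}$, and nothing in the torsor geometry relates them: an isomorphism $x\to x'$ in $\Ho(\cC)$ does not by itself compare the local singularity of $W_A$ at the point $x\in A^1$ with its local singularity at the distinct point $x'\in A^1$. Equality of these local contributions is precisely the nontrivial content of Proposition \ref{well-defined_class}, so as written the argument assumes what it must prove.

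This is where the paper's proof does its real work, and it needs two inputs absent from your proposal: Theorem \ref{Kajiura} (existence and uniqueness of the cyclic minimal model), which shows that near $\alpha$ the potential is formally equivalent to $W^{min}_{\alpha}$ plus a nondegenerate quadratic form, with $W^{min}_{\alpha}$ depending only on the isomorphism class of $\alpha$ in $\Ho(\cC)$ and the rank of the quadratic form expressible through the homotopy-invariant quantity $\chi_{\leq 1}(\Hom^{\bullet}_{\Ho(\cC)}(\alpha,\alpha))-\chi_{\leq 1}(A)$; and the motivic Thom--Sebastiani theorem, which converts this formal splitting into the statement that the (Denef--Loeser) Milnor-fibre class at $\alpha$, hence the stalk class of the vanishing cycles, depends only on the isomorphism class, after which one integrates over the constructible family $Y_i$ and applies the realization $F$. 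With that pointwise invariance in hand, your $Z_{ij}$-correspondence would be a reasonable way to organize the global comparison (the remaining technical points --- constructibility of the isomorphism spaces, Zariski-local triviality of the torsors, which holds since $\Aut_{\Ho(\cC)}(X)$ is the unit group of a finite-dimensional algebra --- are indeed manageable), but at that stage it essentially reproduces the paper's argument; without it, the proof does not go through.
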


\begin{proof} This class is actually dual to the Hodge realization of the motivic Milnor fiber used in \cite{KS08}.
Namely, for any smooth complex algebraic variety $X$ with a regular function $W$ Denef and Loeser (\cite{DL1},\cite{DL2}) define
the element $S_W\in M^{\mu}(X^0),$ where $X^0=W^{-1}(0),$ $\mu=\lim\limits_{\leftarrow}\mu_n,$ and for a scheme $Y$ one defines $M^{\mu}(Y)$
to be the localized Grothendieck group of $Y$-schemes with good action of $\mu.$ Further, for any locally closed embedding $\iota:Z\hookrightarrow X_0,$
denoting by $p:Z\to pt$ the projection we get the element
$$S_{W,Z}:=p_{!}\iota^* S_W\in M^{\mu}_{\C}:=M^{\mu}(\Spec\C).$$

There is a natural map $$F:M^{\mu}_{\C}\to K_0(MMHS).$$
If $X$ is an algebraic variety with good action of some $\mu_n,$ then one puts
$$F([X])=(G_m\to\A^1)_{!}((X\times G_m)/\mu_n\stackrel{g}{\to}G_m)_!\Q_{(X\times G_m)/mu_n}(0).$$
The map $F$ is actually a homomorphism, where one considers the Thom-Sebastiani product on $M^{\mu}_{\C}$ \cite{DL3}.
According to \cite{KS08}, we have the equality
$$F(1-S_{W,Z})=[H^{\bullet,crit}_c(Z,W)].$$

Kontsevich and Soibelman \cite{KS08} use more general motivic Milnor fibre for formal power series. Put $\sqrt{\bL}:=1-S_{z^2,0}.$
Let$\bigsqcup\limits_{i=1}^m Y_i\subset Ob(\cC)$ be some constructible subset as above, and fix some $Y_i.$
Then for $\alpha\in Y_i$ the potential on $\End^1(\alpha)\cong A^1$ equals to $W_{\alpha}(z)=W_A(z+\alpha).$ It follows from Theorem \ref{Kajiura}
and the Thom-Sebastiani Theorem \cite{DL3} that we have
$$S_{W_A,Y_i}=\int\limits_{\alpha\in Y_i}(S_{W_{\alpha}^{min},0}\cdot\sqrt{\bL}^{\dim \End^1(\alpha)/\ker(m_1:\End^1(\alpha)\to\End^2(\alpha))}).$$
Since the potential $W_{\alpha}^{min}$ depends only on the isomorphism class of $\alpha$ in $\Ho(\cC),$
and we have $$\dim \End^1(\alpha)/\ker(m_1:\End^1(\alpha)\to\End^2(\alpha))=\chi_{\leq 1}(\Hom^{\bullet}_{\Ho(\cC)}(\alpha,\alpha))-\chi_{\leq 1}(A),$$
we conclude that the element
$S_{W,Y_i}$ depends only on the set of homotopy equivalence classes of objects in $Y_i.$ Hence,
the class $[D(H^{\bullet,crit}_c(\cC,W_A))]$ is well-defined.
\end{proof}

Now, suppose that we have some $3$CY $A_{\infty}$-category $\cA$ with two objects $U_1,U_2,$ with $\End(U_i)=A_i,$
and again only finitely many of $m_n$ are non-zero on $\cA.$
Then, we have full $A_{\infty}$-subcategories $\cM\cC(A_1)_{\infty},\cM\cC(A_2)_{\infty}\subset\cM\cC(\cA)_{\infty}.$ Assume that
$\cC_i\subset\cM\cC(A_i)_{\infty}$ are $A_{\infty}$-subcategories satisfying $(\star).$ Moreover, let us assume that
$$\Hom^{\leq 0}_{\Ho(\cM\cC(\cA)_{\infty})}(X_1,X_2)=0,\quad \Hom^{<0}_{\Ho(\cM\cC(\cA)_{\infty})}(X_2,X_1)=0,\quad X_1\in Ob(\cC_1),\,X_2\in Ob(\cC_2).$$

Then define the subcategory $\cC_{1,2}\subset\cM\cC(\cA)_{\infty},$
$$\cC_{1,2}=\{\alpha=(\alpha_{11},\alpha_{12},\alpha_{21},\alpha_{22})\in MC(\cA)\mid \alpha_{12}=0,\alpha_{11}\in Ob(\cC_1),\alpha_{22}\in Ob(\cC_2)\}.$$

\begin{prop}\label{factorization}We have that
$$[D(H^{\bullet,crit}_c(\cC_{1,2}),W_{\cA})]=[D(H^{\bullet,crit}_c(\cC_{1}),W_{A_1})]\cdot [D(H^{\bullet,crit}_c(\cC_{2}),W_{A_2})]\cdot T^{\sum\limits_{i\leq 1}\dim\Hom_{\cC}(U_2,U_1)}.$$\end{prop}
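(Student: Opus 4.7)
The starting observation is that the condition $\alpha_{12}=0$ forces the potential to decouple. Each summand $\langle m_n(\alpha,\dots,\alpha),\alpha\rangle$ in $W_\cA(\alpha)$ corresponds to a cyclic word in the components $\alpha_{ij}$, whose associated path on the two-vertex set $\{U_1,U_2\}$ must be closed. Since the only admissible off-diagonal step is $U_2\to U_1$ via $\alpha_{21}$ (the return step $\alpha_{12}$ being zero), no cyclic word can involve $\alpha_{21}$ at all. Hence
\begin{equation*}
W_\cA|_{\cC_{1,2}}(\alpha)=W_{A_1}(\alpha_{11})+W_{A_2}(\alpha_{22}),
\end{equation*}
so $W_\cA|_{\cC_{1,2}}$ is pulled back along the projection $p\colon Ob(\cC_{1,2})\to Ob(\cC_1)\times Ob(\cC_2)$, $\alpha\mapsto(\alpha_{11},\alpha_{22})$.

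Next I would analyse $p$ set-theoretically. Its fibre over $(X_1,X_2)$ is cut out inside $A_{21}^1$ by the linear MC equation $D_{X_1,X_2}\alpha_{21}=0$, where $D_{X_1,X_2}$ is the degree-one differential on the $A_\infty$-bimodule $A_{21}$ twisted by the diagonal MC data; in particular $p$ is a linear fibration. Two points $(\alpha_{11},\alpha_{21},\alpha_{22})$ and $(\alpha_{11},\alpha_{21}',\alpha_{22})$ define isomorphic objects in $\Ho(\cM\cC(\cA)_\infty)$ exactly when $\alpha_{21}-\alpha_{21}'\in\im D^0_{X_1,X_2}$, and the automorphism group of any such object is an extension of $\Aut(\alpha_{11})\times\Aut(\alpha_{22})$ by a unipotent gauge with Lie algebra $A_{21}^0$; the vanishing hypothesis $\Hom^{\leq 0}(X_1,X_2)=0$ is what excludes any contribution from the $A_{12}$ direction.

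Now I would invoke the motivic formula \eqref{H^crit}. Choose stratifications $\bigsqcup Y_i^{(s)}\subset\cC_s$ for $s=1,2$ satisfying conditions (i),(ii) of Proposition \ref{well-defined_class}, refined so that all ranks $\dim\Hom^j(X_2,X_1)$ are locally constant, and take the induced stratification of $\cC_{1,2}$ by preimages $p^{-1}(Y_i^{(1)}\times Y_j^{(2)})$ (passing to local sections of the unipotent gauge where necessary). Theorem \ref{Kajiura} applied to $\End_{\cM\cC(\cA)_\infty}(\alpha)$ decomposes the minimal part as $\End(\alpha_{11})_{min}\oplus\End(\alpha_{22})_{min}$ plus a nondegenerate quadratic form on the off-diagonal pieces of $A^\bullet_{12}\oplus A^\bullet_{21}$; $3$CY Serre duality combined with $\Hom^{\leq 0}(X_1,X_2)=0$ guarantees that this form has even rank. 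Applying the Denef--Loeser Thom--Sebastiani identity,
\begin{equation*}
S_{W_\cA,Y_{ij}}=S_{W_{A_1},Y_i^{(1)}}\cdot S_{W_{A_2},Y_j^{(2)}}\cdot\sqrt{\bL}^{\,2d_{ij}},
\end{equation*}
composing with the realization $F$, and dividing by the factorised automorphism denominators in \eqref{H^crit}, one obtains the desired product of critical cohomology classes multiplied by a power of $T$.

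The hard step will be the last one: carefully balancing the dimension of the fibre $\ker D_{X_1,X_2}\subset A_{21}^1$, the dimension of the unipotent gauge $A_{21}^0$, and the rank of the quadratic form produced by Theorem \ref{Kajiura}, so that after cancellation with the automorphism denominators the net power of $T$ is precisely the exponent $\sum_{i\leq 1}\dim\Hom_\cA^i(U_2,U_1)$ stated in the proposition. The assumption $\Hom^{\leq 0}(X_1,X_2)=0$ (equivalently, by $3$CY duality, $\Hom^{\geq 3}(X_2,X_1)=0$) is exactly what renders the off-diagonal quadratic form nondegenerate of the correct parity, so that its contribution collapses cleanly into a power of $\sqrt{\bL}$ rather than producing a nontrivial motivic Milnor fibre.
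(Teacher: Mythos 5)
Your first step already contains the gap. It is true that the \emph{restriction} of $W_{\cA}$ to the locus $\alpha_{12}=0$ equals $W_{A_1}(\alpha_{11})+W_{A_2}(\alpha_{22})$, but the class $[D(H^{\bullet,crit}_c(\cC_{1,2}),W_{\cA})]$ is not computed from that restriction: by the very definition via \eqref{H^crit} and the proof of Proposition \ref{well-defined_class}, what enters at a point $\alpha\in\cC_{1,2}$ is the motivic Milnor fibre of the full ambient potential $W_{\alpha}(z)=W_{\cA}(\alpha+z)$ on all of $\cA^1$, including the $z_{12}$- and $z_{21}$-directions. In those directions $W_{\cA}$ does \emph{not} decouple: every cyclic word containing at least one letter from $A_{12}$ and one from $A_{21}$ contributes a cross term (e.g.\ terms linear in $z_{12}$ with coefficients depending on $\alpha_{21}$ and the diagonal variables), so $W_{\alpha}$ is not a direct sum of a function of the diagonal variables and a function of the off-diagonal ones, and the Denef--Loeser Thom--Sebastiani theorem is not applicable as you use it. Likewise, Theorem \ref{Kajiura} does not give the splitting you assert: the hypotheses only kill $\Hom^{\leq 0}(X_1,X_2)$ and $\Hom^{<0}(X_2,X_1)$, so the off-diagonal $\Ext^1$'s of the twisted endomorphism algebra survive in the minimal model, carry a nontrivial minimal potential, and are not a constant-rank nondegenerate quadratic form (its rank, coming from the twisted differential on $\Hom(X_1,X_2)\oplus\Hom(X_2,X_1)$, jumps along strata, and exactly where it drops the higher-order terms matter).

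This is precisely the difficulty the paper sidesteps by quoting a deep input: its proof consists of invoking the Integral Identity of Kontsevich--Soibelman (\cite{KS}, Subsection 7.8), with the derivation of the factorization from it as in \cite{KS}, Subsection 7.7. The integral identity is what guarantees that integrating the vanishing-cycle (motivic Milnor fibre) contribution over the $\Hom(X_1,X_2)$-directions, in the presence of these cross terms, produces exactly the Tate factor $T^{\sum_{i\leq 1}\dim\Hom(U_2,U_1)}$ independently of the diagonal data. Your dimension-bookkeeping plan at the end cannot replace it: without the identity there is no reason the strata where the off-diagonal ``quadratic form'' degenerates contribute only powers of $\sqrt{\bL}$. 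To repair the argument you would need either to cite the integral identity (as the paper does) or to reprove a version of it, which is a substantial theorem rather than a Thom--Sebastiani computation.
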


\begin{proof}This follows from the Integral Identity proved by in \cite{KS}, Subsection 7.8. The implication is similar to \cite{KS}, Subsection 7.7.\end{proof}

We formulate once again Theorem \ref{DT_for_formal}.

\begin{theo}Suppose that for some formal QP $(Q,W)$ and polynomial QP $(Q',W')$ we have a cyclic $A_{\infty}$-functor
$\phi:\cC_{Q,W}\to Tw\,\cC_{Q',W'},$ inducing an equivalence
$\Phi:D^b(\hat{\Gamma}_{Q,W})\stackrel{\sim}{\to} D^b(\hat{\Gamma}_{Q',W'}).$ Let $Z$ be a central charge on $Q.$ Then one can define the classes
$$[D(H^{\bullet,crit}_{c,G_{\gamma}}(M_{\gamma,V})^{sp},W_{\gamma})]\in\hat{R},$$
for all sectors $V\subset \cH_+,$ and $\gamma\in\Z_{\geq 0}^{V(Q)},$ such that the following holds.

1) Suppose that there is only one $G_{\gamma}$-orbit in $M_{\gamma,V}^{sp},$ and for the corresponding representation $E$
of $\hat{J}_{Q,W}$ we have $$\Ext^1(E,E)=0,\quad \chi_Q(\gamma,\gamma)\equiv \dim\Ext^0(E,E)\text{ mod }2.$$
Then we have
\begin{equation}\label{crit_cohom_formal2}[D(H^{\bullet,crit}_{c,G_{\gamma}}(M_{\gamma,V})^{sp},W_{\gamma})]=[H^{\bullet}(\mrB\Aut(E))]\cdot T^{\dim_{\C}\Aut(E)}.\end{equation}

2) Define the DT series $A_V$ using the classes $[D(H^{\bullet,crit}_{c,G_{\gamma}}(M_{\gamma,V})^{sp},W_{\gamma})]\in\hat{R},$
as in the formula \eqref{formula_for_A_V}.

Suppose that we have a central charge $Z'$ on $Q',$ and for some sectors $V,V'\subset\cH_+$ we have that
$$\Phi(\cC_V)=\cC_{V'},\quad \cC_V\subset D^b(\hat{\Gamma}_{Q,W}),\,\cC_{V'}\subset D^b(\hat{\Gamma}_{Q',W'}),$$ and assume that
\begin{equation}\label{parity_preserved2}\chi_{Q'}([\Phi](\gamma),[\Phi](\gamma))\equiv \chi_Q(\gamma,\gamma)\text{ mod }2,\quad\gamma\in\Z^{V(Q)}.\end{equation}
Then we have
$$A_{V'}=[\Phi](A_V),$$
where in the last formula $[\Phi]$ denotes the induced map on completions of motivic quantum tori.

3) If the sector $V$ is the disjoint union of two sectors $V_1\sqcup V_2$ (in the clockwise order), then we have factorization:
$$A_V=A_{V_1}A_{V_2}.$$
\end{theo}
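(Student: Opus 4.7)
The plan is to transfer the moduli data from the formal QP $(Q,W)$ to the polynomial QP $(Q',W')$ via the cyclic $A_{\infty}$-functor $\phi$, and then apply the general machinery of Propositions \ref{well-defined_class} and \ref{factorization} for $3$CY $A_{\infty}$-categories. Concretely, for each $\gamma\in\Z_{\geq 0}^{V(Q)}$, take the twisted complex $S_{\gamma}=\bigoplus_{i}S_i^{\oplus\gamma^i}$ in $\cC_{Q,W}$ and consider the $3$CY $A_{\infty}$-algebra $\cA_{\gamma}:=\End(\phi(S_{\gamma}))$ inside $Tw\,\cC_{Q',W'}$. Because $W'$ is polynomial, only finitely many products $m_n$ on $\cA_{\gamma}$ are nonzero, so the potential $W_{\cA_{\gamma}}$ is an honest polynomial function on $\cA_{\gamma}^1$. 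The central charge $Z$ and the sector $V$ cut out, inside $\cM\cC(\cA_{\gamma})_{\infty}$, a constructible full $A_{\infty}$-subcategory $\cC_{V,\gamma}$ consisting of those MC solutions whose corresponding $\hat{J}_{Q,W}$-module lies in $\cC_V$; the constructibility of $(\star)$ is exactly the constructibility of the Harder-Narasimhan stratification. One then defines
\[
[D(H^{\bullet,crit}_{c,G_{\gamma}}(M_{\gamma,V}^{sp},W_{\gamma}))] := [D(H^{\bullet,crit}_c(\cC_{V,\gamma},W_{\cA_{\gamma}}))]\in\hat{R}
\]
via formula \eqref{H^crit}; independence of the choice of representatives is Proposition \ref{well-defined_class}.

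For part 1), suppose $M_{\gamma,V}^{sp}$ is a single $G_{\gamma}$-orbit of a representation $E$ with $\Ext^1(E,E)=0$. Take the stratification to consist of a single stratum $Y_1=\{E\}$. By Theorem \ref{Kajiura} the minimal model of $\End(\phi(E))$ has $A_{\min}^1\cong\Ext^1(E,E)=0$, so $W^{\min}=0$ and the contribution of vanishing cycles is trivial. Using the $3$CY duality $\Ext^{3-i}(E,E)\cong\Ext^i(E,E)^{\vee}$ and the vanishing of $\Ext^1$, the exponent $\chi_{\leq 1}(\Hom^{\bullet}_{\Ho(\cC)}(E,E))-\chi_{\leq 1}(A)$ appearing in the motivic Milnor-fibre computation of Proposition \ref{well-defined_class} reduces (after Serre duality and the parity assumption $\chi_Q(\gamma,\gamma)\equiv \dim\Ext^0(E,E)\pmod 2$) to $\dim_{\C}\Aut(E)$, yielding precisely $[H^{\bullet}(\mrB\Aut(E))]\cdot T^{\dim_{\C}\Aut(E)}$; the parity assumption is what allows the $\sqrt{\bL}=T^{1/2}$ prefactors to combine into an integer power of $T$.

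For part 2), the cyclic $A_{\infty}$-functor $\phi$ (and, more generally, the induced equivalence on twisted complexes with cyclic structure) identifies $\cA_{\gamma}$ with the corresponding $3$CY $A_{\infty}$-algebra $\cA'_{[\Phi]\gamma}$ built from $Q'$, and pulls back the potential: $\widehat{\phi}^{\ast}W_{\cA'}=W_{\cA}$. Since the class \eqref{H^crit} is defined intrinsically from the cyclic $A_{\infty}$-structure and the set of isomorphism classes in the homotopy category, it is preserved by such cyclic equivalences; hypothesis $\Phi(\cC_V)=\cC_{V'}$ transports the stratifications, and the parity hypothesis \eqref{parity_preserved2} guarantees that the $T^{-\chi(\gamma,\gamma)/2}$ monomial factors match parity on both sides, so that the equality $A_{V'}=[\Phi](A_V)$ holds in $\hat{R}[T^{1/2}]$. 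For part 3), the decomposition $V=V_1\sqcup V_2$ in the clockwise order gives, at the level of $3$CY $A_{\infty}$-categories, a filtration of every MC solution by its HN components, realized as an upper-triangular twisted complex with blocks $U_1=\phi(S_{\gamma_1})$ and $U_2=\phi(S_{\gamma_2})$, satisfying the hypothesis $\Hom^{\leq 0}(U_1,U_2)=0$, $\Hom^{<0}(U_2,U_1)=0$ of Proposition \ref{factorization} (these follow from stability and semistable quivers having vanishing negative Exts). Proposition \ref{factorization} then gives
\[
[D(H^{\bullet,crit}_c(\cC_{V,\gamma}))] = \sum_{\gamma_1+\gamma_2=\gamma}[D(H^{\bullet,crit}_c(\cC_{V_1,\gamma_1}))]\cdot[D(H^{\bullet,crit}_c(\cC_{V_2,\gamma_2}))]\cdot T^{\sum_{i\leq 1}\dim\Hom(U_2,U_1)},
\]
and the extra $T$-power combines with the shifts $T^{-\chi(\gamma,\gamma)/2}$ in \eqref{formula_for_A_V} and the twist $T^{-\chi(\gamma_1,\gamma_2)}$ coming from multiplication in $\cT_Q^{mot}$ to yield $A_V=A_{V_1}\cdot A_{V_2}$.

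The main obstacle is the careful bookkeeping in part 1): one must show that the normalization dividing by $[D(H^{\bullet}_c(\Aut E))]$ in \eqref{H^crit}, combined with the $\sqrt{\bL}$ twist for the kernel/image parts of $m_1$ and the $3$CY Serre symmetry of $\End^{\bullet}(\phi(E))$, produces exactly $[H^{\bullet}(\mrB\Aut(E))]\cdot T^{\dim_{\C}\Aut(E)}$ with an integer power of $T$; verifying that the parity hypothesis on $\chi_Q(\gamma,\gamma)$ is precisely what is needed here, and that the same parity hypothesis in part 2) is precisely what ensures \eqref{parity_preserved2} translates into a well-defined map of motivic tori, is the most delicate point. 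Factorization in part 3) is then a formal consequence of the Integral Identity already established in \cite{KS}.
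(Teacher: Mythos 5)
Your overall strategy is the same as the paper's: realize representations of $\hat{J}_{Q,W}$ as Maurer--Cartan solutions in $\cA_{\gamma}=\End_{Tw\,\cC_{Q',W'}}(\bigoplus_i\phi(S_i)^{\oplus\gamma^i})$, where the potential is polynomial, define the classes via \eqref{H^crit}, and invoke Propositions \ref{well-defined_class} and \ref{factorization}. However, there is a genuine gap in your definition: you set the class equal to $[D(H^{\bullet,crit}_c(\cC_{V,\gamma},W_{\cA_{\gamma}}))]$ with no further correction, whereas the paper multiplies by the normalization $\sqrt{T}^{\,\chi_{Q}(\gamma,\gamma)-\chi_{\leq 1}(\cA_{\gamma})}$. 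This factor is not cosmetic. The motivic Milnor-fibre computation in Proposition \ref{well-defined_class} produces, at each point, a twist $\sqrt{\bL}^{\,\chi_{\leq 1}(\Hom^{\bullet}_{\Ho}(\alpha,\alpha))-\chi_{\leq 1}(\cA_{\gamma})}$, and $\chi_{\leq 1}(\cA_{\gamma})$ is a property of the chosen $A_{\infty}$-model (the size of the degree $\leq 1$ part of the endomorphism complex of the twisted complex $\phi(S_{\gamma})$), not of the homotopy category; Serre duality says nothing about it. Consequently, with your unnormalized definition, part 1) would give $[H^{\bullet}(\mrB\Aut(E))]$ times a power of $\sqrt{T}$ depending on $\chi_{\leq 1}(\cA_{\gamma})$ rather than $T^{\dim_{\C}\Aut(E)}$, and your argument for part 2) --- that the class is ``defined intrinsically from the cyclic $A_{\infty}$-structure and the isomorphism classes, hence preserved by cyclic equivalences'' --- is unsound: the class of \eqref{H^crit} is not intrinsic in that sense, and moreover $\cA_{\gamma}$ is not identified with the algebra $\cA'_{\gamma'}$ built from the simples of $\cC_{Q',W'}$, only related to it up to quasi-isomorphism. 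This is exactly why the paper has to prove the explicit comparison \eqref{equality_of_crit_cohom}, using Theorem \ref{Kajiura} to split off the minimal potential, the motivic Thom--Sebastiani theorem to control the quadratic pieces, and the parity hypothesis \eqref{parity_preserved2} to replace $T^{\frac12}$ by $\sqrt{T}$; your parity remarks gesture at this but do not supply the computation, and without the normalization no such computation can close the gap.

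Two smaller points. First, you drop the paper's requirement that the MC solutions in $\cC_{\gamma,V}$ be strictly upper-triangular with respect to an ordering of the summands of $\phi(S_{\gamma})$; some such condition is needed to ensure the objects parametrized are exactly iterated extensions of the $\phi(S_i)$, i.e.\ correspond to representations in $M^{sp}_{\gamma,V}$ of dimension vector $\gamma$. Second, your treatment of part 3) via Proposition \ref{factorization} (Harder--Narasimhan blocks, $\Hom$-vanishing from stability) agrees with the paper's one-line argument and is fine once the classes themselves are correctly defined.
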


\begin{proof}
We have the objects $\phi(S_i)\in Tw\,\cC_{Q',W'},$ $i\in V(Q).$ For any $\gamma\in\Z_{\geq 0}^{V(Q)},$
put $$X_{\gamma}:=\bigoplus\limits_{i\in V(Q)}\phi(S_i)^{\oplus\gamma^i}\in Tw\, \cC_{Q',W'}.$$
Put $$\cA_{\gamma}:=\End_{Tw\,\cC_{Q',W'}}(X_{\gamma}).$$
For any central charge $Z$
on $Q,$ and a sector $V\subset\cH_+,$ take the subcategory $\cC_{\gamma,V}\subset\cM\cC(\cA_{\gamma})_{\infty},$ corresponding to MC solutions $\alpha\in\cA_{\gamma}^1,$
which are strictly upper-triangular with respect to some order of direct summands of $X_{\gamma},$ and such that the corresponding object of $D^b(\hat{\Gamma}_{Q,W})$
(under the preimage $\Phi^{-1}$) is a representation of $\hat{J}_{Q,W}$ from $M_{\gamma,V}^{sp}.$
Define $$[D(H^{\bullet,crit}_{c,G_{\gamma}}(M_{\gamma,V}^{sp},W))]:=[D(H^{\bullet,crit}_c(\cC_{\gamma,V}),W_{\cA_{\gamma}})]\cdot\sqrt{T}^{\chi_{Q}(\gamma,\gamma)- \chi_{\leq 1}(\cA_{\gamma})}.$$

The property 1) follows directly from the definition.

To show 2), first put
$$A_V=1+\sum\limits_{\gamma\in C_V}[D(H_{c,G_{\gamma}}^{\bullet,crit}(M_{\gamma,V}^{sp},W_{\gamma}))]\cdot
T^{-\frac{\chi(\gamma,\gamma)}2}\hat{w}_{\gamma}\in\hat{\cT}_{Q,C_V}^{mot}.$$

Note that the DT series defined in \cite{KS} for polynomial potential (see formula \ref{formula_for_A_V} above) are actually obtained in the same way,
replacing $\phi$ by the inclusion
$\cC_{Q',W'}\to Tw\,\cC_{Q',W'}.$
Indeed, denote the objects of $\cC_{Q',W'}$ by $S_i',$ and put
$$X_{\gamma'}':=\bigoplus\limits_{i\in V(Q)}\phi(S_i')^{\oplus\gamma'^i}\in Tw\, \cC_{Q',W'},$$
$$\cA_{\gamma'}':=\End_{Tw\,\cC_{Q',W'}}(X_{\gamma'}'),$$
and define$\cC_{\gamma',V'}\subset \cM\cC(\cA_{\gamma'}')_{\infty}$ in the same way as above.
Then we have
\begin{multline*}[D(H^{\bullet,crit}_{c,G_{\gamma'}}(M_{\gamma',V'}^{sp},W'))]=[D(H^{\bullet,crit}_c(\cC_{\gamma',V'}),W_{\cA_{\gamma'}'})]=\\
[D(H^{\bullet,crit}_c(\cC_{\gamma',V'}),W_{\cA_{\gamma'}'})]\cdot\sqrt{T}^{\chi_{Q'}
(\gamma',\gamma')- \chi_{\leq 1}(\cA_{\gamma'}')}.\end{multline*}

Now assume that $\gamma'=[\Phi](\gamma).$ We need to show that
\begin{multline}\label{equality_of_crit_cohom}[D(H^{\bullet,crit}_c(\cC_{\gamma',V'}),W_{\cA_{\gamma'}'})]\cdot\sqrt{T}^{\chi_{Q'}
(\gamma',\gamma')- \chi_{\leq 1}(\cA_{\gamma'}')}\cdot T^{-\frac{\chi_{Q'}(\gamma',\gamma')}2}=\\
[D(H^{\bullet,crit}_c(\cC_{\gamma,V}),W_{\cA_{\gamma}})]\cdot\sqrt{T}^{\chi_{Q}(\gamma,\gamma)- \chi_{\leq 1}(\cA_{\gamma})}\cdot T^{-\frac{\chi_{Q}(\gamma,\gamma)}2}.\end{multline}

By \eqref{parity_preserved2}, we may replace $T^{\frac12}$ by $\sqrt{T}$ in \eqref{equality_of_crit_cohom}.
Then, arguing as in Proposition \ref{well-defined_class}, we express LHS and RHS of \eqref{equality_of_crit_cohom} using motivic Milnor fibres, and obtain the desired equality.

The property 3)
follows from Proposition \ref{factorization}, similarly to \cite{KS}.
\end{proof}

\end{document}